\documentclass[reqno]{amsart}

\usepackage{xfrac}

\usepackage{hyperref,todonotes,comment,multicol}
\usepackage[color,matrix,arrow]{xy}
\usepackage{graphics,graphicx,amssymb}
\usepackage{pagecolor,lipsum}
\usepackage{latexsym}
\usepackage{mathrsfs}
\xyoption{curve}
\usepackage{hyperref}
\usepackage{xcolor}
\hypersetup{ 
colorlinks,
citecolor=blue,
filecolor=blue,
linkcolor=blue,
urlcolor=blue
}

\newtheorem{lemma}{Lemma}[section]
\newtheorem{proposition}[lemma]{Proposition}
\newtheorem{theorem}[lemma]{Theorem}
\newtheorem{corollary}[lemma]{Corollary}

\newtheorem*{theoremA}{Theorem}

\theoremstyle{definition}
\newtheorem{example}[lemma]{Example}
\newtheorem{definition}[lemma]{Definition}
\newtheorem{remark}[lemma]{Remark}

\newcommand{\mfk}[1]{\mathfrak{#1}}
\newcommand{\mbb}[1]{\mathbb{#1}}
\newcommand{\mcl}[1]{\mathcal{#1}}

\newcommand{\opn}[1]{\operatorname{#1}}
\newcommand{\ot}{\otimes}

\DeclareMathOperator{\Hom}{Hom}

\DeclareMathOperator{\Ind}{Ind}
\DeclareMathOperator{\Fun}{Fun}

\newcommand{\Frac}{\mathcal{F}\!rac}

\renewcommand{\1}{\mathbf{1}}

\newcommand{\nocontentsline}[3]{}
\newcommand{\tocless}[2]{\bgroup\let\addcontentsline=\nocontentsline#1{#2}\egroup}

\definecolor{page_color}{HTML}{000000}
\definecolor{text_color}{HTML}{F0EAD6}

\makeindex

\title[Cocompletions for non-abelian tensor categories]{Cocompletions for non-abelian vertex tensor categories}
\date{\today}

\author{Robert McRae}
 \address{Yau Mathematical Sciences Center, Tsinghua University, Beijing 100084, China}
  \email{rhmcrae@tsinghua.edu.cn}
\author{Cris Negron}
\address{Department of Mathematics, University of Southern California, Los Angeles, CA 90007}
\email{cnegron@usc.edu}

\begin{document}

\begin{abstract}
It was recently shown by Huang that the category of $C_1$-cofinite modules for any vertex operator algebra $V$ admits a natural braided monoidal structure.
Here, we show that this structure extends uniquely to a vertex algebraically natural braided monoidal structure on the completion of the category of $C_1$-cofinite $V$-modules under filtered colimits, within the ambient category of all generalized $V$-modules. 
A critical point here is that we do not assume the category of $C_1$-cofinite $V$-modules is abelian or that $C_1$-cofinite modules are compact in the cocompletion, since these properties are not known to hold in general. Our results have many applications in the representation theory of vertex operator algebra extensions, since many vertex operator algebras can be realized as objects in the filtered colimit completion of the category of $C_1$-cofinite modules for a vertex operator subalgebra. Generalizing from the specific vertex algebraic setting, we also establish existence and uniqueness for extensions of monoidal structures along a dense inclusion $\mcl{C}_0 \to \mcl{C}$ from an abstract, essentially small monoidal category into a well-structured cocomplete target.
\end{abstract}

\maketitle

\tableofcontents

\section{Introduction}

Given an essentially small, finitely cocomplete category $\mcl{C}_0$, a natural question is how to embed $\mcl{C}_0$ into a cocomplete category $\mcl{C}$. When $\mcl{C}_0$ also has a monoidal or braided monoidal structure, one would also like $\mcl{C}$ to be monoidal such that the embedding $\mcl{C}_0\hookrightarrow\mcl{C}$ is a monoidal functor. In fact, it is always possible to do this taking $\mcl{C}$ to be the Ind-completion $\opn{Ind}(\mcl{C}_0)$, as described in \cite[Chapter 6]{kashiwaraschapira06}, for example. (In the case that $\mcl{C}_0$ is monoidal, we review how $\opn{Ind}(\mcl{C}_0)$ inherits a monoidal structure from $\mcl{C}_0$ in Section \ref{sec:Ind-comp-monoidal} below.)

However, the Ind-completion $\opn{Ind}(\mcl{C}_0)$ is rather abstract and may not be a suitable cocompletion of $\mcl{C}_0$ for some applications. For example, if objects of $\mcl{C}_0$ are concrete representations of some algebraic structure, then one may want objects of the cocompletion $\mcl{C}$ to be representations of the same algebraic structure. But objects of $\opn{Ind}(\mcl{C}_0)$ as it is normally defined are not vector spaces equipped with the action of an algebraic structure; rather they are functors from essentially small filtered categories to $\mcl{C}_0$. Thus we seek different, possibly more concrete, cocomplete monoidal categories into which $\mcl{C}_0$ embeds.

In this paper, we consider a general situation where a monoidal category $\mcl{C}_0$ is a full subcategory of an ambient cocomplete abelian category $\mcl{B}$. (We do not assume $\mcl{B}$ has a monoidal structure.) Our main result is that under reasonable conditions, the full subcategory $\mcl{C}\subseteq\mcl{B}$ consisting of filtered colimits of objects of $\mcl{C}_0$ is cocomplete and admits a natural monoidal structure such that the embedding $\mcl{C}_0\hookrightarrow\mcl{C}$ is a monoidal functor. The analogous result also holds when $\mcl{C}_0$ is braided. Our main application is the case that $\mcl{C}_0$ is a category of grading-restricted modules for a vertex operator algebra $V$, in which case the ambient category $\mcl{B}$ is the category of all generalized $V$-modules.

Similar results on braided monoidal structure for cocompletions of vertex algebraic braided monoidal categories were proved in \cite{CMY-completions}, but the conditions imposed on $\mcl{C}_0$ there were excessively restrictive; in particular, the arguments in \cite{CMY-completions} assumed that $\mcl{C}_0$ was abelian. Here, we remove this restriction, proving our results for finitely cocomplete but not necessarily abelian monoidal categories $\mcl{C}_0$.

\subsection{Results for vertex operator algebras}
Let $\mcl{C}_0$ be a category of grading-restricted generalized modules for a vertex operator algebra $V$ that admits the vertex algebraic braided monoidal structure described in \cite{HLZ8}. Then $\mcl{C}_0$ lives in the ambient cocomplete category $\mcl{B}$ of generalized $V$-modules, which are $V$-modules that decompose into generalized eigenspaces for the Virasoro $L_0$-operator, with no restriction on the dimensions of the generalized eigenspaces. The category $\mcl{B}$ is in general too big to be the desired cocompletion of $\mcl{C}_0$; indeed, it is not known in general whether $\mcl{B}$ admits the braided monoidal structure of \cite{HLZ8}. Instead, we consider the minimal possible cocompletion of $\mcl{C}_0$ within $\mcl{B}$, namely, we take $\mcl{C}$ to be the full subcategory of $\mcl{B}$ consisting of all filtered colimits of objects in $\mcl{C}_0$. More concretely, objects of $\mcl{C}$ are precisely those generalized $V$-modules that are the unions of submodules which are objects of $\mcl{C}_0$.

It was shown in \cite{CMY-completions} that in this setting, and under further conditions, the subcategory $\mcl{C}$ indeed admits the vertex algebraic braided monoidal structure of \cite{HLZ8}, and that the embedding $\mcl{C}_0\hookrightarrow\mcl{C}$ is braided monoidal. However, recently, one of the further conditions in \cite{CMY-completions} has turned out to be a problem: it was assumed in \cite{CMY-completions} that $\mcl{C}_0$ is an abelian category (in particular, closed under taking submodules). The problem here is that Huang has recently shown in \cite{Hu-C1} that the category of so-called $C_1$-cofinite modules for any vertex operator algebra $V$ admits the braided monoidal structure of \cite{HLZ8}, but this category is not known to be abelian in general. Thus we cannot apply the results of \cite{CMY-completions} to the category $\mcl{C}_0$ of $C_1$-cofinite $V$-modules in general, even though this is a braided monoidal category.

In this paper, we solve this problem by removing the assumption that $\mcl{C}_0$ is abelian. Specifically, our main result for vertex operator algebras is:
\begin{theoremA}[\ref{thm:vertex-mon-for-C0}]
Let $V$ be a vertex operator algebra, let $\mcl{C}_0$ be an essentially small full subcategory of the category $\mcl{B}$ of generalized $V$-modules, and let $\mcl{C}$ be the full subcategory of $\mcl{B}$ whose objects are the unions of submodules which are objects of $\mcl{C}_0$. Assume also that:
\begin{enumerate}

\item $\mcl{C}_0$ is closed under finite coproducts and arbitrary quotients.

\item $\mcl{C}_0$ admits the vertex algebraic braided monoidal structure of \cite{HLZ8}.

\item For any intertwining operator $\mcl{Y}$ of type $\binom{M_3}{M_1\,M_2}$ where $M_1$, $M_2\in\opn{obj}(\mcl{C}_0)$ and $M_3\in\opn{obj}(\mcl{C})$, the image of $\mcl{Y}$ is contained in a $\mcl{C}_0$-submodule of $M_3$.

\item All objects of $\mcl{C}_0$ are finitely generated as generalized $V$-modules.

\end{enumerate}
Then $\mcl{C}$ is additive and cocomplete and admits the vertex algebraic braided monoidal structure of \cite{HLZ8}. Furthermore, the inclusion $\mcl{C}_0\hookrightarrow\mcl{C}$ is braided monoidal and the tensor product $\mcl{C}\times\mcl{C}\rightarrow\mcl{C}$ is cocontinuous in each factor.
\end{theoremA}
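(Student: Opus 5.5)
The plan is to build the vertex algebraic tensor product on $\mcl{C}$ directly from the universal property of \cite{HLZ8}, which is defined in terms of intertwining operators. For $W_1, W_2\in\mcl{C}$, I will write $W_i=\bigcup_\alpha W_i^\alpha$ as a filtered union of $\mcl{C}_0$-submodules. The candidate tensor product is the filtered colimit
\[
W_1\boxtimes W_2 := \varinjlim_{(\alpha,\beta)} W_1^\alpha\boxtimes W_2^\beta ,
\]
computed in $\mcl{B}$, with intertwining operator $\mcl{Y}_\boxtimes$ induced from the tensor product intertwining operators of $\mcl{C}_0$. First I must check that $\mcl{C}$ is additive and cocomplete. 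Finite coproducts of unions are unions of finite coproducts, using (1). Filtered colimits in $\mcl{B}$ of objects of $\mcl{C}$ stay in $\mcl{C}$: the image of any $\mcl{C}_0$-submodule in the colimit is a quotient of it, hence lies in $\mcl{C}_0$ by (1), and the colimit is the union of these images. Cokernels in $\mcl{B}$ of maps in $\mcl{C}$ are likewise unions of quotients of $\mcl{C}_0$-submodules. Together these give all colimits.

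Next I will verify the universal property. Given an intertwining operator $\mcl{Y}$ of type $\binom{M}{W_1\,W_2}$ with $M\in\mcl{C}$ (or in $\mcl{B}$, for the HLZ structure on $\mcl{C}$), I restrict it to each pair $W_1^\alpha, W_2^\beta$. By (3), this restriction lands in some $\mcl{C}_0$-submodule $M_0\subseteq M$. The universal property in $\mcl{C}_0$ then gives a unique map $W_1^\alpha\boxtimes W_2^\beta\to M_0\hookrightarrow M$, and uniqueness makes these maps compatible over the filtered diagram, so they glue on the colimit. For this to make sense I first need $M$ itself in $\mcl{C}$. If the universal property is only required within $\mcl{C}$, then (3) applies directly.

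The main obstacle is that the induced maps $W_1^\alpha\boxtimes W_2^\beta\to W_1\boxtimes W_2$ need not be injective, because $\mcl{C}_0$ is not abelian. I must therefore show that $\mcl{Y}_\boxtimes$ is well defined and surjective in the appropriate sense, and that the factorization is unique. Here I use (4): a finitely generated module mapping into a filtered union factors through a finite stage at the level of generators. Combined with (3), this should give a Hom-commutation of the form
\[
\Hom(W_1^\alpha\boxtimes W_2^\beta,\ \varinjlim M_\gamma)=\varinjlim \Hom(W_1^\alpha\boxtimes W_2^\beta,\ M_\gamma)
\]
up to passing to images, which is a weak compactness replacing genuine compactness. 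Uniqueness follows because the image of $\mcl{Y}_\boxtimes$ spans the colimit.

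Finally, I will transport the associativity isomorphisms, unit isomorphisms and braiding from $\mcl{C}_0$ stagewise and pass to colimits. The convergence and associativity conditions of \cite{HLZ8} for $\mcl{C}$ reduce to finitely many vectors, hence to finite stages in $\mcl{C}_0$, where they already hold. The pentagon and hexagon axioms hold because they hold on a jointly epimorphic family of stages. For cocontinuity of $\boxtimes$ in each variable, filtered colimits are preserved by construction via the iterated colimit, and cokernels are preserved by the universal property, since intertwining operators from a cokernel are exactly those that vanish on the relevant image. The inclusion $\mcl{C}_0\hookrightarrow\mcl{C}$ is braided monoidal because a trivial diagram computes the colimit for objects already in $\mcl{C}_0$.
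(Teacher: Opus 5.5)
There is a genuine gap, and it sits at the step that needed new work here. Your construction $W_1\boxtimes W_2=\varinjlim W_1^\alpha\boxtimes W_2^\beta$ and the gluing proof of its universal property relative to $\mcl{C}$ are fine; this is the construction of \cite{CMY-completions}. That step needs neither (4) nor injectivity of the structure maps $\phi_{\alpha,\beta}:W_1^\alpha\boxtimes W_2^\beta\to W_1\boxtimes W_2$. Their non-injectivity has nothing to do with $\mcl{C}_0$ being non-abelian (it comes from $\boxtimes$ not preserving monomorphisms), and it is harmless there.

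It bites in the step you pass over, ``transport the associativity isomorphisms stagewise.'' By your own definition, $(W_1\boxtimes W_2)\boxtimes W_3$ is the colimit of $U\boxtimes W_3^\gamma$ over the $\mcl{C}_0$-submodules $U\subseteq W_1\boxtimes W_2$. The associativity isomorphisms of $\mcl{C}_0$, however, live on $(W_1^\alpha\boxtimes W_2^\beta)\boxtimes W_3^\gamma$, and $W_1^\alpha\boxtimes W_2^\beta$ is not such a $U$; it only surjects onto $\opn{Im}\phi_{\alpha,\beta}$. So you first need the natural map
\[
\varinjlim_{\alpha,\beta,\gamma}(W_1^\alpha\boxtimes W_2^\beta)\boxtimes W_3^\gamma\longrightarrow (W_1\boxtimes W_2)\boxtimes W_3
\]
to be an isomorphism. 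Proved directly, this means two things. First, the kernels of the surjections $(W_1^\alpha\boxtimes W_2^\beta)\boxtimes W_3^\gamma\to\opn{Im}\phi_{\alpha,\beta}\boxtimes W_3^\gamma$ must die in the colimit. Second, every $\mcl{C}_0$-submodule of $W_1\boxtimes W_2$ must lie in some $\opn{Im}\phi_{\alpha,\beta}$. This is \cite[Proposition 5.2]{CMY-completions}, the step that required closure under submodules there.

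It is exactly what the paper supplies, abstractly via Proposition \ref{prop:loc_tensor} or directly via Proposition \ref{prop:key-prop}. The direct proof uses three ingredients: the right-exactness Lemma \ref{lem:right-exact}; the fact that the kernel of $M_\alpha\to\varinjlim M_A$ is the union of the kernels of the transition maps $M_\alpha\to M_{\widetilde{\alpha}}$; and (4), to place each $\mcl{C}_0$-submodule of a direct limit inside the image of a single stage.

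Your assertion that filtered colimits are preserved ``by construction via the iterated colimit'' does not cover this. By construction, $X\boxtimes Y$ is computed from the system of $\mcl{C}_0$-submodules of $X$ itself, and $(\alpha,\beta)\mapsto W_1^\alpha\boxtimes W_2^\beta$ is not such a system. Likewise, your ``weak compactness'' use of (4) is aimed at the universal property, where it is not needed, rather than at this comparison.

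To repair the argument, you can either import Proposition \ref{prop:key-prop}, or let your cokernel argument do all the work. For the second option, take any small diagram $X_\ast$ in $\mcl{C}$ and $Y\in\opn{obj}(\mcl{C})$.
\begin{enumerate}
\item Construct an intertwining operator of type $\binom{\opn{colim}(X_k\boxtimes Y)}{\opn{colim}X_k\ \ Y}$ exactly as in the proof of Proposition \ref{prop:VOA-tens-prods-co-cont}.
\item Observe that its target lies in $\mcl{C}$, because $\mcl{C}$ is closed under colimits in $\mcl{B}$.
\item Use the universal property of $(\opn{colim}X_k)\boxtimes Y$ relative to $\mcl{C}$, together with the spanning property of $\mcl{Y}_\boxtimes$, to invert the canonical map.
\end{enumerate}
This gives cocontinuity for arbitrary, non-injective systems. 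Apply it to $W_1\boxtimes W_2=\varinjlim W_1^\alpha\boxtimes W_2^\beta$, and use $X\boxtimes W_3=\varinjlim_\gamma X\boxtimes W_3^\gamma$ for $X\in\opn{obj}(\mcl{C}_0)$; this yields the displayed isomorphism. Either way, this identification must be in place before the associativity isomorphisms, pentagon and hexagon can be transported from $\mcl{C}_0$. As written, your proposal silently assumes it.
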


The category of $C_1$-cofinite modules for any vertex operator algebra satisfies the conditions of this theorem, so we also get:

\begin{theoremA}[\ref{thm:braid-mon-for-C1}]
Let $\mcl{C}_0$ be the category of $C_1$-cofinite modules for any vertex operator algebra $V$, and let $\mcl{C}$ be the full subcategory of generalized $V$-modules whose objects are the unions of submodules which are objects of $\mcl{C}_0$. Then $\mcl{C}$ is additive and cocomplete, and $\mcl{C}$ admits the vertex algebraic braided monoidal structure of \cite{HLZ8}. Furthermore, the inclusion $\mcl{C}_0\hookrightarrow\mcl{C}$ is braided monoidal and the tensor product $\mcl{C}\times\mcl{C}\rightarrow\mcl{C}$ is cocontinuous in each factor.
\end{theoremA}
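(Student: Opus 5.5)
The plan is to deduce this from Theorem~\ref{thm:vertex-mon-for-C0} by checking its four hypotheses for $\mcl{C}_0$ the category of $C_1$-cofinite grading-restricted generalized $V$-modules. Essential smallness comes first: a $C_1$-cofinite module is finitely generated, so it is a quotient of a countable-dimensional universal object. Its isomorphism classes therefore form a set.

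For hypothesis (1), a finite direct sum $M_1\oplus M_2$ is $C_1$-cofinite. Indeed $C_1(M_1\oplus M_2)=C_1(M_1)\oplus C_1(M_2)$, where $C_1(M)$ is spanned by $u_{-1}w$ for $u\in V_+$ and $L_{-1}w$, so the quotient $(M_1\oplus M_2)/C_1(M_1\oplus M_2)$ is finite-dimensional. If $M\to N$ is surjective, then $C_1(M)$ maps onto $C_1(N)$, so $N/C_1(N)$ is a quotient of $M/C_1(M)$ and is again finite-dimensional. The only extra point is grading restriction of $N$, which is inherited from $M$.

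Hypothesis (2) is Huang's theorem \cite{Hu-C1}: $\mcl{C}_0$ carries the vertex algebraic braided monoidal structure of \cite{HLZ8}. Hypothesis (4) is standard: lift a basis of the finite-dimensional space $M/C_1(M)$, which may be taken homogeneous, and run a lowest-weight induction. Since $M$ is grading-restricted with weights bounded below in each coset of $\mathbb{C}/\mathbb{Z}$, every homogeneous vector lies in the span of the lifts plus $C_1(M)$, and $C_1(M)$ is generated by lower-weight data. So $M$ is generated by the lifts.

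Hypothesis (3) is the main obstacle. Let $\mcl{Y}$ be an intertwining operator of type $\binom{M_3}{M_1\,M_2}$ with $M_1,M_2$ $C_1$-cofinite and $M_3\in\mcl{C}$. Take $W\subseteq M_3$ to be the submodule generated by all coefficients $w_{1,(n)}w_2$, that is, the image of $\mcl{Y}$. I need $W$ to be $C_1$-cofinite, or at least contained in a $\mcl{C}_0$-submodule. The key input is the $C_1$-cofiniteness result of Huang and of Miyamoto, which I would cite from \cite{Hu-C1}. Its proof uses a Nahm-type spanning argument: $W/C_1(W)$ is spanned by $w_{1,(n)}w_2$ with $w_i$ running over lifts of bases of $M_i/C_1(M_i)$ and $n$ in a finite range, hence is finite-dimensional. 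Grading restriction of $W$ comes either from the same spanning estimate or from the fact that $M_3$ is a union of grading-restricted submodules.

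If the cited result needs $M_3$ grading-restricted, I would first reduce to that case. Because $M_1\otimes M_2$ exists in $\mcl{C}_0$ by (2), $\mcl{Y}$ factors through a module map $M_1\boxtimes M_2\to M_3$ only when $M_3\in\mcl{C}_0$. So instead I argue directly: $M_1,M_2$ are finitely generated, and the spanning set above involves finitely many weights in each coset. Hence $W$ is finitely generated with finite-dimensional weight spaces, and it is $C_1$-cofinite by the spanning argument. Thus $W$ is the required $\mcl{C}_0$-submodule, and Theorem~\ref{thm:vertex-mon-for-C0} gives every conclusion.
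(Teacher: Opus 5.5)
Your overall route matches the paper's: check hypotheses (1)--(4) of Theorem~\ref{thm:vertex-mon-for-C0} together with essential smallness, citing \cite{Hu-C1} for (2) and using the lifting argument for (4). There are two small slips.
\begin{itemize}
\item For a module, $C_1(M)$ is spanned only by the $u_{-1}w$ with $u\in\bigoplus_{n\geq1}V_{(n)}$. Including $L_{-1}M$ enlarges the class beyond the one Huang's theorem covers: for the universal Virasoro vertex operator algebra, every irreducible Verma module would then qualify. Your arguments for (1) and (4) still work verbatim with the correct definition.
\item Essential smallness is better justified by countable-dimensionality, via \cite[Lemma 6]{Miy-C1} or the fact that the submodule generated by $w$ is spanned by the $v_nw$. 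Your ``countable-dimensional universal object'' is never actually constructed.
\end{itemize}

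The genuine gap is in (3). The whole content of (3) is that $M_3$ lies only in $\mcl{C}$. Such an $M_3$ may fail to be lower bounded or grading-restricted, and may have infinitely many weight classes modulo $\mathbb{Z}$. As you observe, the universal property of $M_1\boxtimes M_2$ is then unavailable.

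The Nahm--Miyamoto spanning estimate controls only $W/C_1(W)$, and lower truncation bounds coefficient weights only within each coset of $\mathbb{C}/\mathbb{Z}$. Neither shows that $W$ is lower bounded, or that only finitely many cosets occur. Without lower boundedness you cannot run the weight induction that turns $W=C_1(W)+F$ into ``$W$ is generated by $F$ and has finite-dimensional weight spaces.'' So your step ``hence $W$ is finitely generated with finite-dimensional weight spaces'' does not follow from ``finitely many weights in each coset.''

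Your alternative justification, that $M_3$ is a union of grading-restricted submodules, is circular. $W$ is spanned by infinitely many coefficients, which could a priori be spread over infinitely many of the $\mcl{C}_0$-submodules exhausting $M_3$. Showing that $W$ sits inside one of them is precisely what (3) asserts.

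The missing input is a theorem that the image of an intertwining operator among $C_1$-cofinite modules is itself $C_1$-cofinite, in particular lower bounded, even when the target is not lower bounded. The paper takes exactly this from \cite[Key Theorem]{Miy-C1} combined with \cite[Corollary 2.12]{CMY-completions} (see also \cite{Nahm}). Citing that result and dropping your fallback ``direct'' argument repairs the proof.
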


\subsection{Applications to vertex operator algebra extensions}
We expect Theorems \ref{thm:vertex-mon-for-C0} and \ref{thm:braid-mon-for-C1} to have many applications in the study of extensions of vertex operator algebras. It was shown in \cite{HKL} that if $\mcl{C}_0$ is a braided monoidal category of modules for a vertex operator algebra $V$, and if $A$ is a vertex operator algebra that contains $V$ as a vertex operator subalgebra, such that $A$ viewed as a $V$-module is an object of $\mcl{C}_0$, then $A$ has the structure of a commutative algebra in the braided monoidal category $\mcl{C}_0$. This allows one to use methods and results on commutative algebras in braided monoidal categories to study representations of $A$ based on the representation theory of $V$, and vice versa \cite{CKM}. However, this works only if $A$ is actually an object of a braided monoidal category of $V$-modules, and if this braided monoidal structure is the vertex algebraic one of \cite{HLZ8}.

Thanks to \cite{Hu-C1}, one can start with the braided monoidal category $\mcl{C}_0$ of $C_1$-cofinite modules for a vertex operator algebra $V$ and look for extensions $A\supseteq V$. But in many cases, one finds extensions $A$ that are not objects of $\mcl{C}_0$, but rather are infinite direct sums (or more general unions) of $C_1$-cofinite $V$-modules. This occurs, for example, with the vertex algebra of chiral differential operators on an algebraic group \cite{MSV}, which is an extension of the tensor product of two affine vertex operator algebras. For such examples, Theorem \ref{thm:braid-mon-for-C1} now shows that indeed the extended algebra $A$ is an object of a suitable vertex algebraic braided monoidal category of $V$-modules, namely the cocompletion $\mcl{C}$ of $\mcl{C}_0$ inside the category of generalized $V$-modules, and therefore the extension theory of \cite{HKL, CKM} can be used to relate the representation theories of $A$ and $V$. In the example of the vertex algebra of chiral differential operators on a simple algebraic group at a rational level, the base category $\mcl{C}_0$ of $C_1$-cofinite modules for the tensor product of two affine vertex operator algebras is not expected to be abelian in general, so Theorem \ref{thm:braid-mon-for-C1} is truly needed here; the earlier results of \cite{CMY-completions} are not sufficient.

Another application of Theorem \ref{thm:braid-mon-for-C1} occurs in the recent work \cite{CDN}, where fusion rules of $C_1$-cofinite modules for affine $W$-algebras of $ADE$ type at irrational levels are obtained through analyzing a certain extension of the tensor product of the principal affine $W$-algebra with a corresponding affine vertex operator algebra. In this example, it turns out that the category of $C_1$-cofinite modules for the principal $W$-algebra is abelian, but this is not known \textit{a priori}, and thus Theorem \ref{thm:braid-mon-for-C1} is required rather than the earlier results proved in \cite{CMY-completions}. This example was in fact our original motivation for proving Theorems \ref{thm:vertex-mon-for-C0} and \ref{thm:braid-mon-for-C1}.

\subsection{Abstract results}\label{subsec:abs-set}

Theorem \ref{thm:vertex-mon-for-C0} is, after some specific analysis, a special case of a completely formal result which we believe to be of independent interest. The generic situation is as follows: We consider an essentially small, full subcategory $\mcl{C}_0$ in an abelian category $\mcl{B}$.  We assume that $\mcl{B}$ admits all colimits and that $\mcl{C}_0$ is stable under sums and quotients in $\mcl{B}$, and hence finite colimits as well.  We take $\mcl{C}\subseteq \mcl{B}$ to be the full subcategory spanned by those objects in $\mcl{B}$ which are obtained as filtered colimits of objects in $\mcl{C}_0$.  In this situation, an object lives in $\mcl{C}$ precisely when it's expressible as the union of its subobjects which lie in $\mcl{C}_0$, $M=\bigcup_\alpha M_\alpha$.
\par

We consider the case where $\mcl{C}_0$ admits a (braided) monoidal structure which commutes with finite colimits in each factor, and which preserves surjections.

\begin{theoremA}[\ref{thm:abs_v1}]
Consider $\mcl{C}_0$ and $\mcl{C}$ as above, with $\mcl{C}_0$ (braided) monoidal. Suppose that for any filtered colimit $\varinjlim_\alpha M_\alpha=M$ which occurs in $\mcl{C}_0$, and any $N$ in $\mcl{C}_0$, the natural maps
\[
\varinjlim_\alpha (M_\alpha\ot N)\to M\ot N\ \ \text{and}\ \ \varinjlim_\alpha (N\ot M_\alpha)\to N\ot M
\]
are isomorphisms.  Suppose also that all objects in $\mcl{C}_0$ are finitely generated (Definition \ref{def:fg}).  Then there is a unique (braided) monoidal structure on $\mcl{C}$ for which the following hold:
\begin{enumerate}
\item[(a)] The inclusion $i:\mcl{C}_0\to \mcl{C}$ is (braided) monoidal with trivial structure maps $i(M)\ot i(N)\overset{=}\to i(M\ot N)$.
\item[(b)] The product functor $\ot:\mcl{C}\times\mcl{C}\to \mcl{C}$ commutes with small colimits in each factor.
\end{enumerate}
\end{theoremA}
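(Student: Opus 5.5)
Every object $M$ of $\mcl{C}$ carries the filtered system of its $\mcl{C}_0$-subobjects, $M=\varinjlim_{M_\alpha\subseteq M}M_\alpha$, and uniqueness forces the definition. If a structure satisfies (a) and (b), then
\[
M\ot N\cong \varinjlim_{\alpha,\beta} M_\alpha\ot N_\beta,
\]
the colimit taken in $\mcl{C}$ over the product of the two filtered posets of $\mcl{C}_0$-subobjects. The structure maps are forced as well: associator, unitors and braiding on $\mcl{C}$ must be the colimits of those of $\mcl{C}_0$. This handles uniqueness. For existence I adopt this formula as the definition, and the work is to show it is functorial, lands in $\mcl{C}$, and is coherent.

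Landing in $\mcl{C}$ is easy: a filtered colimit of objects of $\mcl{C}_0$ lies in $\mcl{C}$ by definition, once we know $\mcl{C}$ is closed under filtered colimits in $\mcl{B}$. For this, the image of a filtered colimit of unions of $\mcl{C}_0$-subobjects is the union of images of $\mcl{C}_0$-objects, and these images are quotients, hence in $\mcl{C}_0$.

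Functoriality is the main obstacle, since $\mcl{C}_0$ is not closed under subobjects. A morphism $f:M\to M'$ must carry each $M_\alpha$ into some $M'_{\alpha'}$. This holds because $f(M_\alpha)$ is a quotient of $M_\alpha$, hence in $\mcl{C}_0$ and itself among the $M'_{\alpha'}$; the quotient-closure hypothesis is essential here. The issue is that the colimit over subobjects need not agree with the colimit over an arbitrary presentation $M=\varinjlim M_i$ by $\mcl{C}_0$-objects with non-injective transition maps. This agreement is also what cocontinuity (b) requires.

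This is where finite generation enters. If $X\in\mcl{C}_0$ is finitely generated and $M=\varinjlim_i M_i$ is filtered in $\mcl{B}$, then $X\subseteq M$ factors through the image of some $M_i$. That image is in $\mcl{C}_0$, so the $\mcl{C}_0$-subobjects of $M$ are cofinal among the images of the $M_i$. To pass from images to the $M_i$ themselves, I would use the hypothesis on filtered colimits inside $\mcl{C}_0$. Since $\mcl{C}_0$ is closed under quotients, $\operatorname{im}(M_i\to M)$ is the filtered colimit in $\mcl{C}_0$ of the $M_j$ for $j\geq i$. The hypothesis then gives $\operatorname{im}(M_i)\ot N\cong\varinjlim_{j\ge i}M_j\ot N$. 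Combining these, $\varinjlim_i M_i\ot N$ agrees with $\varinjlim_\alpha M_\alpha\ot N$. Applying this in each variable gives independence of presentation, functoriality, and preservation of filtered colimits in each factor.

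To get all small colimits, it remains to check finite colimits. Finite coproducts and cokernels of $\mcl{C}$-objects are filtered colimits of the corresponding finite colimits of $\mcl{C}_0$-subobjects. The product on $\mcl{C}_0$ is right exact in each factor, and filtered colimits commute with finite colimits in $\mcl{B}$, so these are preserved. Finally, the associator, unit isomorphisms and braiding are defined as colimits of the $\mcl{C}_0$ ones, using the identification $(M\ot N)\ot P\cong\varinjlim M_\alpha\ot N_\beta\ot P_\gamma$ given by cocontinuity. The pentagon, triangle and hexagon axioms then hold termwise, hence in the colimit. By construction the inclusion $i$ is strict, as in (a).
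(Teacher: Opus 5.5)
Your route differs from the paper's. You build $M\ot N$ directly as $\varinjlim_{\alpha,\beta}M_\alpha\ot N_\beta$ over $\mcl{C}_0$-subobjects and reduce everything to an independence-of-presentation lemma; this is essentially the concrete strategy of \cite{CMY-completions} and Section~\ref{sec:another-proof}. The paper instead identifies $\mcl{C}$ with the localization $\opn{Ind}(\mcl{C}_0)[W^{-1}]$, via the adjunction $\pi\dashv\opn{Can}$, which is where finite generation enters. It then shows $W$ is stable under $\ot$, so associator, braiding and coherence are inherited from $\opn{Ind}(\mcl{C}_0)$ rather than checked by hand.

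Your route is viable, but its key step is false as written. You assert that $\opn{im}(M_i\to M)$ is the filtered colimit in $\mcl{C}_0$ of the $M_j$, $j\geq i$, and deduce $\opn{im}(M_i)\ot N\cong\varinjlim_{j\geq i}M_j\ot N$ from (T). But $\{j\geq i\}$ is cofinal in the filtered index category, so $\varinjlim_{j\geq i}M_j=M$, which in general is not in $\mcl{C}_0$, and (T) says nothing about it. For $N=\mathbf{1}$ your isomorphism would give $\opn{im}(M_i)\cong M$ for every $i$. That fails as soon as $M$ is not isomorphic to an object of $\mcl{C}_0$. So the one step where (T) must do its work is not established, and ``combining these'' has nothing to combine.

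The correct system is the images of $M_i$ in the later terms. For $\gamma:i\to j$ set $\bar{M}_{i,\gamma}=M_i/\opn{ker}(M_i\to M_j)$.
\begin{itemize}
\item These are quotients of $M_i$, hence in $\mcl{C}_0$. By (C3) their filtered colimit is $M_i/\opn{ker}(M_i\to M)=\opn{im}(M_i)$, again in $\mcl{C}_0$. So (T) does give $\opn{im}(M_i)\ot N\cong\varinjlim_\gamma\bar{M}_{i,\gamma}\ot N$.
\item Because $M_i\to M_j$ factors through $\bar{M}_{i,\gamma}$, the structure map $M_i\ot N\to\varinjlim_k M_k\ot N$ factors through every $\bar{M}_{i,\gamma}\ot N$.
\item These factorizations are compatible in $\gamma$ and in $i$. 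Check this after precomposing with the epimorphisms $M_i\ot N\to\bar{M}_{i,\gamma}\ot N$, using right exactness of $\ot$.
\item They then assemble into an inverse of $\varinjlim_i M_i\ot N\to\varinjlim_i\opn{im}(M_i)\ot N$.
\end{itemize}
This is exactly the reduction argument $M_A\to M_{\mfk{A}}$ in the proof of Proposition~\ref{prop:loc_tensor}.

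With this fix, plus your cofinality observation from finite generation, the independence lemma holds. Your treatment of filtered and finite colimits then goes through. For the coherence axioms you still need to check that identifications such as $(M\ot N)\ot P\cong\varinjlim(M_\alpha\ot N_\beta)\ot P_\gamma$ are natural and mutually compatible before arguing ``termwise''.
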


The proof is somewhat delicate, but fairly straightforward in its trajectory.  First, according to general principles, the monoidal structure on $\mcl{C}_0$ lifts to a monoidal structure on its Ind-completion $\opn{Ind}(\mcl{C}_0)$.  We then show that, under the above assumptions, the unique map $\opn{Ind}(\mcl{C}_0)\to \mcl{C}$ induced by the inclusion $\mcl{C}_0\to \mcl{C}$ realizes $\mcl{C}$ as a localization
\[
\opn{Ind}(\mcl{C}_0)[W^{-1}]\overset{\sim}\to \mcl{C}
\]
relative to a certain class of morphisms in $\opn{Ind}(\mcl{C}_0)$ (Corollary \ref{cor:470}).  We argue that the class $W$ is stable under the product on $\opn{Ind}(\mcl{C}_0)$ (Proposition \ref{prop:loc_tensor}), so that the above localization $\opn{Ind}(\mcl{C}_0)[W^{-1}]\cong \mcl{C}$ inherits a monoidal structure from that of $\opn{Ind}(\mcl{C}_0)$.

\begin{remark}
In the specific case of representation categories for vertex operator algebras, we provide an alternate proof of Theorem \ref{thm:braid-mon-for-C1} which uses ideas from the proof of Theorem \ref{thm:abs_v1} in conjunction with arguments from \cite{CMY-completions}, and which avoids most categorical nonsense. The interested reader can see Section \ref{sec:another-proof} for details.
\end{remark}

\subsection{Outline}

We now outline the remaining contents of this paper. In Section \ref{sect:cats}, we review definitions and results from category theory that we will need, including Ind-completions, finitely generated objects in an abelian category, and isofibrations.

In Section \ref{sec:Ind-comp-monoidal}, we review how to give the Ind-completion $\opn{Ind}(\mcl{C}_0)$ of a (braided) monoidal, essentially small, and finitely cocomplete category $\mcl{C}_0$ the structure of a (braided) monoidal category such that the inclusion $\mcl{C}_0\hookrightarrow \opn{Ind}(\mcl{C}_0)$ is (braided) monoidal and such that the product on $\opn{Ind}(\mcl{C}_0)$ is cocontinuous in each factor.

In Section \ref{sect:big-objects}, we consider a cocomplete abelian category $\mcl{B}$ with two full subcategories $\mcl{C}_0\subseteq\mcl{C}\subseteq\mcl{B}$ where $\mcl{C}_0$ is essentially small and stable under finite colimits in $\mcl{B}$, and $\mcl{C}$ consists of all objects in $\mcl{B}$ which are obtained as small colimits of objects in $\mcl{C}_0$. Under suitable conditions, we show that $\mcl{C}$ is equivalent to a localization of $\mathrm{Ind}(\mcl{C}_0)$. Specifically, we show that $\mcl{C}$ is the localization of $\mathrm{Ind}(\mcl{C}_0)$ by the class of $\mcl{C}$-relative equivalences, which are the morphisms in $\opn{Ind}(\mcl{C}_0)$ that get sent to isomorphisms in $\mcl{C}$ under the natural functor $\opn{Ind}(\mcl{C}_0)\rightarrow\mcl{C}$ induced by the inclusion $\mcl{C}_0\hookrightarrow \mcl{C}$.

In Section \ref{sec:monoidality}, we remain in the setting of Section \ref{sect:big-objects}. We show that under suitable further conditions, the class of $\mcl{C}$-relative equivalences is stable under the product on $\mathrm{Ind}(\mcl{C}_0)$. Using this, we show that the (braided) monoidal structure on $\mathrm{Ind}(\mcl{C}_0)$ induces a (braided) monoidal structure on its localization $\mcl{C}$, thus completing the proof of Theorem \ref{thm:abs_v1}.

In Section \ref{sec:vertex-alg}, we first recall definitions from vertex operator algebra representation theory that we will need, and then we check the conditions of Theorem \ref{thm:abs_v1} for suitable categories of generalized modules for vertex operator algebras, thus proving Theorems \ref{thm:vertex-mon-for-C0} and \ref{thm:braid-mon-for-C1}.

In Section \ref{sec:another-proof}, we give an alternative more direct proof of Theorem \ref{thm:braid-mon-for-C1} that does not explicitly use Theorem \ref{thm:abs_v1}. Instead, we observe that to prove Theorem \ref{thm:braid-mon-for-C1}, it is sufficient to follow the previous arguments from \cite{CMY-completions} and replace any that use the assumption that $\mcl{C}_0$ is abelian. In fact, there is only one key proposition in \cite{CMY-completions} that essentially uses this assumption, and we give it a new proof using elements from the proofs of Propositions \ref{prop:348} and \ref{prop:loc_tensor} here.

\medskip

\noindent\textbf{Acknowledgments.} This work was made possible by Institut Pascal at Université Paris-Saclay with the support of the program ``Investissements d’avenir'' ANR-11-IDEX-0003-01, since the work was begun while we attended the conference \textit{CFT: Algebraic, Topological and Probabilistic Approaches to Correlators in Conformal Field Theory} at the Institut Pascal. We thank the organizers of the conference for inviting us to attend, and we thank Thomas Creutzig for introducing this problem to us at the conference. Cris Negron was supported by NSF CAREER Grant DMS-2239698 and Simons Collaboration Grant 999367.

\section{Categorical background}
\label{sect:cats}

We recall basic information on cocomplete categories, cocompletions, and isofibrations.  Isofibrations are employed throughout the text in order to control the ambiguities which arise when transferring structures along universal functors.

\subsection{Standard construction for Ind-completions}
\label{sect:stand_ind}

Recall that a category $K$ is called filtered if every pair of objects in $K$ admits a cone, and every pair of maps $f,f':\alpha\to \beta$ in $K$ admits a third map $g:\beta\to \kappa$ for which $g\circ f =g\circ f'$.  A filtered colimit in a category $\mcl{B}$ is a colimit indexed by a filtered category.
\par

For essentially small $\mcl{C}_0$, the standard construction of the Ind-completion $\opn{Ind}(\mcl{C}_0)$ is as the category of diagrams $K\to \mcl{C}_0$ from essentially small filtered categories $K$.  For objects $M_\ast:K\to \mcl{C}_0$ and $N_\ast:L\to \mcl{C}_0$ in $\opn{Ind}(\mcl{C}_0)$ the morphisms are given by
\[
\Hom_{\opn{Ind}(\mcl{C}_0)}(M_\ast,N_\ast)=\varprojlim_\alpha\varinjlim_\beta\Hom_{\mcl{C}_0}(M_\alpha,N_\beta).
\]
We have the fully faithful inclusion $\mcl{C}_0\to \opn{Ind}(\mcl{C}_0)$, $M\mapsto (M:\ast\to \mcl{C}_0)$.

We note that $\opn{Ind}(\mcl{C}_0)$ admits all filtered colimits. Furthermore, when $\mcl{C}_0$ admits finite colimits, $\opn{Ind}(\mcl{C}_0)$ admits all small colimits \cite[Proposition 6.1.18]{kashiwaraschapira06}.

\begin{definition}
Given a category $\mcl{C}$ which admits small filtered colimits, $M\in\mathrm{obj}(\mcl{C})$ is called compact if for each small filtered colimit $N=\varinjlim_\beta N_\beta$ the natural map
\[
\varinjlim_\beta \Hom_{\mcl{C}}(M, N_\beta)\to \Hom_{\mcl{C}}(M,N)
\]
is an isomorphism.
\end{definition}

When $\mcl{C}_0$ is idempotent split, the inclusion $\mcl{C}_0\to \opn{Ind}(\mcl{C}_0)$ identifies $\mcl{C}_0$ with the subcategory of compact objects in $\opn{Ind}(\mcl{C}_0)$.  More generally the compacts in $\opn{Ind}(\mcl{C}_0)$ are identified with the idempotent splitting of $\mcl{C}_0$ in $\opn{Ind}(\mcl{C}_0)$ \cite[Proposition 5.40]{kelly82}.

\subsection{Cocomplete categories, etc.}

\begin{definition}
We call a category $\mcl{C}$ cocomplete (resp.\ finitely cocomplete) if it admits all small colimits (resp.\ finite colimits).  A functor between cocomplete categories $F:\mcl{C}\to \mcl{D}$ is called cocontinuous if it commutes with small colimits.  A functor between finitely cocomplete categories is called finitely cocontinuous if it commutes with finite colimits.  In the respective cases we let
\[
\Fun_{cc}(\mcl{C},\mcl{D})\ \ \text{and}\ \ \Fun_{fcc}(\mcl{C},\mcl{D})
\]
denote the full subcategories of cocontinuous and finitely cocontinuous functors in $\Fun(\mcl{C},\mcl{D})$, respectively.
\end{definition}

When $\mcl{C}_0$ is finitely cocomplete the inclusion $\mcl{C}_0\to\opn{Ind}(\mcl{C}_0)$ is finitely cocontinuous \cite[Corollary 6.1.6]{kashiwaraschapira06}.  So, given any cocomplete category $\mcl{D}$, restriction provides a natural functor
\[
\Fun_{cc}(\opn{Ind}(\mcl{C}_0),\mcl{D})\to \Fun_{fcc}(\mcl{C}_0,\mcl{D}).
\]
In this cocomplete setting--which is the setting we are interested in--the universal property of the Ind-completion appears as follows.

\begin{theorem}\label{thm:ind_univ}
Let $\mcl{C}_0$ be essentially small and finitely cocomplete. For any cocomplete category $\mcl{D}$, restriction provides an equivalence of categories
\[
\Fun_{cc}(\opn{Ind}(\mcl{C}_0),\mcl{D})\overset{\sim}\to \Fun_{fcc}(\mcl{C}_0,\mcl{D}).
\]
\end{theorem}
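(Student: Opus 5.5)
We will prove Theorem \ref{thm:ind_univ} by constructing an explicit quasi-inverse to restriction, namely left Kan extension along the inclusion $j:\mcl{C}_0\to\opn{Ind}(\mcl{C}_0)$. Given a finitely cocontinuous $F:\mcl{C}_0\to\mcl{D}$, define $\tilde F:\opn{Ind}(\mcl{C}_0)\to\mcl{D}$ on an object $M_\ast:K\to\mcl{C}_0$ by $\tilde F(M_\ast)=\varinjlim_{\alpha\in K}F(M_\alpha)$, which exists since $\mcl{D}$ is cocomplete. On morphisms, an element of $\varprojlim_\alpha\varinjlim_\beta\Hom_{\mcl{C}_0}(M_\alpha,N_\beta)$ gives, after applying $F$ and composing with the colimit injections, a compatible family of maps $F(M_\alpha)\to\varinjlim_\beta F(N_\beta)$. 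This uses that $\varinjlim_\beta\Hom(M_\alpha,N_\beta)\to\Hom(F M_\alpha,\varinjlim_\beta FN_\beta)$ is well defined. The family therefore induces $\tilde F(M_\ast)\to\tilde F(N_\ast)$, and functoriality is a routine check. Since objects of $\mcl{C}_0$ are represented by one-object diagrams, $\tilde F\circ j= F$ on the nose. The construction is also visibly functorial in $F$, giving a functor $\Fun_{fcc}(\mcl{C}_0,\mcl{D})\to\Fun(\opn{Ind}(\mcl{C}_0),\mcl{D})$.

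Next I would show that $\tilde F$ is cocontinuous. In $\opn{Ind}(\mcl{C}_0)$ every object $M_\ast$ is the filtered colimit $\varinjlim_\alpha j(M_\alpha)$. Filtered colimits in $\opn{Ind}(\mcl{C}_0)$ can be computed by the standard ``colimit of diagrams'' construction: a filtered diagram of ind-objects is re-indexed over a single filtered category, as in \cite[Ch.~6]{kashiwaraschapira06}. So $\tilde F$ commutes with filtered colimits by interchange of colimits in $\mcl{D}$. It remains to treat finite colimits, since a functor preserving filtered and finite colimits preserves all small colimits. For a finite diagram in $\opn{Ind}(\mcl{C}_0)$, I would invoke the fact, from \cite[Proposition 6.1.18]{kashiwaraschapira06} and its proof, that the diagram can be presented levelwise: it is a filtered colimit of finite diagrams in $\mcl{C}_0$ of the same shape. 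Its colimit is then $\varinjlim_\alpha j(\varinjlim_{\text{fin}} X_\alpha)$, using that $j$ is finitely cocontinuous \cite[Corollary 6.1.6]{kashiwaraschapira06}. Applying $\tilde F$, finite cocontinuity of $F$ together with commutation of colimits with colimits in $\mcl{D}$ gives the claim.

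Finally I would show that restriction is an equivalence, with $F\mapsto\tilde F$ as its quasi-inverse. One composite is already the identity: $\tilde F j=F$. For the other, let $G$ be cocontinuous. Then $G(M_\ast)\cong G(\varinjlim_\alpha jM_\alpha)\cong\varinjlim_\alpha G j(M_\alpha)=\widetilde{Gj}(M_\ast)$. I would check that these isomorphisms are natural in $M_\ast$ and in $G$. Alternatively, I can argue full faithfulness of restriction directly: a natural transformation $Gj\Rightarrow G'j$ extends uniquely to $G\Rightarrow G'$, because every object is a filtered colimit of objects of $\mcl{C}_0$ and both functors preserve these colimits.

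The main obstacle is the levelwise presentation of finite diagrams in $\opn{Ind}(\mcl{C}_0)$. This amounts to showing that coequalizers and finite coproducts of ind-objects can be computed as filtered colimits of the corresponding constructions in $\mcl{C}_0$. For coproducts one takes the product of the indexing categories. For a parallel pair $f,g:M_\ast\to N_\ast$, I would follow Kashiwara--Schapira's reduction: reindex so that both maps are represented by levelwise morphisms over a common filtered category, which is possible because hom-sets are lim-colims and finite diagrams without loops can be made levelwise. I would cite this rather than reprove it.
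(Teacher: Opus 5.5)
Your argument is correct in outline, and its skeleton matches the paper's. Full faithfulness of restriction comes from density of $\mcl{C}_0$ under filtered colimits; your ``alternative'' argument is exactly the paper's first line. Essential surjectivity amounts to showing that the left Kan extension of a finitely cocontinuous $F$ is cocontinuous.

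The difference is in how that last fact is obtained. The paper does not prove it. It switches to the presheaf model of $\opn{Ind}(\mcl{C}_0)$ (filtered colimits of representables in $\Fun(\mcl{C}_0^{\opn{op}},\opn{Set})$) and cites \cite{kelly82} and \cite{adamekrosicky}. You stay in the standard model, write $\tilde F$ down explicitly, and check cocontinuity by hand. You split it into filtered colimits plus finite coproducts and coequalizers, using Kashiwara--Schapira's levelwise representation of parallel pairs. Your route is more self-contained. It also gives a strict quasi-inverse with $\tilde F\circ j=F$ on the nose (given a choice of colimits over one-object categories), which the paper instead arranges via its isofibration arguments. The paper's route is much shorter because the hard steps are packaged in the citations.

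One step of your sketch is thinner than it looks: preservation of filtered colimits is not literally an interchange of colimits. The transition maps of a filtered diagram $I\to\opn{Ind}(\mcl{C}_0)$ are only elements of $\varprojlim\varinjlim\Hom$, i.e.\ germs, not levelwise maps. So there is no diagram indexed by $I\times K$ to interchange over. You need one of two fixes:
\begin{itemize}
\item a Grothendieck-construction-plus-cofinality argument; or, more cleanly,
\item the observation that $\tilde F$ is isomorphic to the restriction of $\opn{Lan}_y F:\Fun(\mcl{C}_0^{\opn{op}},\opn{Set})\to\mcl{D}$. This functor is a left adjoint (to $D\mapsto\Hom_{\mcl{D}}(F(-),D)$), hence cocontinuous. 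You restrict it to a full subcategory closed under filtered colimits in presheaves \cite[Chapter 6]{kashiwaraschapira06}.
\end{itemize}
The second fix is precisely what the paper's change of model buys.

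For the finite step, you only need the levelwise presentation for finite coproducts (including the initial object) and coequalizers, not for arbitrary finite diagrams. This is what your last paragraph actually uses, so the broader claim in your second paragraph can be dropped.
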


\begin{proof}
Since $\opn{Ind}(\mcl{C}_0)$ is generated by $\mcl{C}_0$ under filtered colimits, it is clear that the restriction functor is fully faithful.  So we need only establish essential surjectivity.
\par

We have the Yoneda embedding $\mcl{C}_0\to \Fun(\mcl{C}_0^{\opn{op}},\opn{Set})$ which identifies, via evaluating colimits, $\opn{Ind}(\mcl{C}_0)$ with the full subcategory in $\Fun(\mcl{C}_0^{\opn{op}},\opn{Set})$ spanned by those functors which are obtained as filtered colimits of the representables. After making this replacement, essential surjectivity follows by \cite[Proposition 5.39]{kelly82} and \cite[Proposition 1.45 (ii)]{adamekrosicky}.
\end{proof}

Generally, we speak of \emph{an} Ind-completion of $\mcl{C}_0$ as the choice of a cocomplete category $\mcl{C}'$ and a functor $F:\mcl{C}_0\to \mcl{C}'$ for which restriction along $F$ produces an equivalence as above.  In particular, if one has some preferred construction of such an Ind-completion $\mcl{C}_0\to \mcl{C}'$ we might \emph{replace} the standard construction from above, and write
\[
\opn{Ind}(\mcl{C}_0)=\mcl{C}'
\]
instead.  Indeed, we already came across such a change of models for the Ind-completion in the proof of Theorem \ref{thm:ind_univ}. We discuss another situation which is relevant for us below.

\subsection{Relative realizations of $\opn{Ind}(\mcl{C}_0)$}
\label{sect:rel_ind}

Suppose $\mcl{C}_0$ is a full subcategory in a cocomplete category $\mcl{C}$.  Suppose also that all objects in $\mcl{C}$ are generated by objects in $\mcl{C}_0$ under filtered colimits, and that $\mcl{C}_0$ is essentially small.  A $\mcl{C}_0$-atlas for an object $M$ in $\mcl{C}$ is a colimit diagram
\[
M_A:K\star \{\infty\}\to \mcl{C}
\]
in which $K$ is essentially small and filtered, $M_A|_{K}$ has image in $\mcl{C}_0$, and $M_A(\infty)=M$. (Here $K\star\{\infty\}$ is the category $K$ with a terminal object $\infty$ adjoined.) We can define a category $\mcl{C}_{\opn{atlas}}$ whose objects are objects $M$ in $\mcl{C}$ with a choice of $\mcl{C}_0$-atlas $M_A$, and whose morphisms are given by
\[
\Hom_{\mcl{C}_{\opn{atlas}}}(M_A,N_B) = \varprojlim_\alpha\varinjlim_\beta\Hom_{\mcl{C}}(M_\alpha,N_\beta).
\]
We have the apparent forgetful map to the standard Ind-completion
\[
\mcl{C}_{\opn{atlas}} \to \opn{Ind}(\mcl{C}_0)
\]
which is an equivalence.  (Full faithfulness is by construction and essential surjectivity follows by cocompleteness of $\mcl{C}$.)  We also have the two inclusions from $\mcl{C}_0$ which fit into a diagram
\[
\xymatrix{
&\mcl{C}_0\ar[dl]\ar[dr] & \\
\mcl{C}_{\opn{atlas}}\ar[rr]^\sim & & \opn{Ind}(\mcl{C}_0).
}
\]
Supposing $\mcl{C}_0$ is finitely cocomplete, and given any cocomplete category $\mcl{D}$, it follows that we have a diagram
\[
\xymatrix{
\Fun_{cc}(\opn{Ind}(\mcl{C}_0),\mcl{D})\ar[rr]^{\sim}\ar[dr]_{\sim} & & \Fun_{cc}(\mcl{C}_{\opn{atlas}},\mcl{D})\ar[dl]\\
 & \Fun_{fcc}(\mcl{C}_0,\mcl{D}) & 
}
\]
and so conclude that the restriction functor
\[
\Fun_{cc}(\mcl{C}_{\opn{atlas}},\mcl{D})\to \Fun_{fcc}(\mcl{C}_0,\mcl{D})
\]
is an equivalence.  This is to say, $\mcl{C}_{\opn{atlas}}$ is an Ind-completion of $\mcl{C}_0$.
\par

Throughout the text we are interested in such a pairing of a finitely cocomplete category $\mcl{C}_0$ with an embedding $\mcl{C}_0\to \mcl{C}$ into a cocomplete target.  In such a setting we choose the relative model for the Ind-completion,
\[
\opn{Ind}(\mcl{C}_0)=\mcl{C}_{\opn{atlas}}.
\]
\par

The only important points here are that $\mcl{C}_{\opn{atlas}}$ has the correct universal property and that the structure map $\mcl{C}_0\to \mcl{C}_{\opn{atlas}}$ is injective (by which we mean injective on objects and morphisms).  We note that in this setting the unique cocontinuous extension of the inclusion $\mcl{C}_0\to \mcl{C}$ to $\mcl{C}_{\opn{atlas}}$ is simply the functor
\[
\pi:\opn{Ind}(\mcl{C}_0)=\mcl{C}_{\opn{atlas}}\to \mcl{C}
\]
which forgets atlases on objects, $M_A\mapsto M$.

\begin{definition}\label{def:reduced}
A colimit diagram $M_A:K\star\{\infty\}\to \mcl{C}$ is called reduced if, for each structure map $M_\alpha\to M$ and factorization $M_\alpha\to \bar{M}_\alpha\to M$ through an epimorphism $M_\alpha\to \bar{M}_\alpha$ in $\mcl{C}$, the map $M_\alpha\to \bar{M}_\alpha$ is an isomorphism.  A $\mcl{C}_0$-atlas $M_A$ for an object $M$ in $\mcl{C}$ is called reduced if $M_A$ is reduced as a colimit diagram.
\end{definition}

In all cases of interest, reducedness simply means that all of the transition maps $M_\alpha\to M_\beta$ are injective, in either the set theoretic or abelian sense of the term.

\begin{remark}
The subscript $A$ in the notation $M_A$ is just a marker which indicates that we have chosen a particular ``form" of the object $M$, or more directly some filtered system over $M$ which realizes it as a colimit of objects in $\mcl{C}_0$. As above, given any filtered colimit diagram $M_A:K\star\{\infty\}\to \mcl{C}$ and object $\alpha$ in $K$, we take $M_{\alpha}=M_A(\alpha)$.
\end{remark}

\subsection{Finitely generated objects}

\begin{definition}\label{def:fg}
Let $\mcl{B}$ be an abelian category which admits small filtered colimits.  We say an object $M$ in $\mcl{B}$ is finitely generated if, for any expression as a reduced filtered colimit $\varinjlim_\alpha M_\alpha = M$, there is an index $\gamma$ at which the structure map $M_\gamma\to M$ is an isomorphism.
\end{definition}

In any concrete setting, an object $M$ is finitely generated if and only if in any expression of $M$ as a filtered union of subobjects
\[
M=\cup_{\alpha} M_\alpha
\]
we have $M=M_\gamma$ at some index $\gamma$.

\begin{example}
For $R$ a commutative ring, an object $M$ in $R$-mod is finitely generated, in the categorical sense, if and only if $M$ is finitely generated as an $R$-module.
\end{example}

\subsection{Fiber products for categories}

The pullback
\[
\xymatrix{
\mcl{C}_0\times_{\mcl{D}}\mcl{C}_1\ar[r]\ar[d] & \mcl{C}_1\ar[d]^{F_1}\\
\mcl{C}_0\ar[r]_{F_0} & \mcl{D}
}
\]
of a pair of functors with a common target is the category whose objects and morphisms are given by the expected fiber products,
\[
\opn{obj}(\mcl{C}_0\times_{\mcl{D}}\mcl{C}_1)=\opn{obj}(\mcl{C}_0)\times_{\opn{obj}(\mcl{D})}\opn{obj}(\mcl{C}_1)
\]
and
\[
\Hom_{\mcl{C}_0}(M_0,N_0)\times_{\Hom_{\mcl{D}}(X,Y)}\Hom_{\mcl{C}_1}(M_1,N_1).
\]
Here $F_i(M_i)=X$, $F_i(N_i)=Y$, and composition is defined in the unique manner so that each projection $\mcl{C}_0\times_{\mcl{D}}\mcl{C}_1\to \mcl{C}_i$ is a functor.

\begin{remark}
If one considers categories as simplicial sets, via the nerve functor for example, then the fiber product of a pair of categories is simply given by the fiber product of simplicial sets.
\end{remark}

In general the pullback construction is poorly behaved at the level of the $2$-category of categories.  It is, however, a reasonable thing to consider whenever one of the $F_i$ is an \emph{isofibration}.

\subsection{Isofibrations}

\begin{definition}[{\cite[\href{https://kerodon.net/tag/01EN}{01EN}]{kerodon}}]
A functor $F:\mcl{C}\to \mcl{D}$ is called an isofibration if, for every object $N$ in $\mcl{C}$ and isomorphism $\bar{\alpha}:X\to F(N)$ in $\mcl{D}$, there is an isomorphism $\alpha:M\to N$ in $\mcl{C}$ with $F(\alpha)=\bar{\alpha}$.
\end{definition}

Clearly, by taking inverses, we see that $F$ is an isofibration if and only if one can lift isomorphisms $\bar{\alpha}:F(M)\to Y$ along $F$.  So this notion is symmetric.  The following is trivial to check.

\begin{lemma}\label{lem:pb_isofib}
If $F:\mcl{C}\to \mcl{D}$ is an isofibration, then for any functor $\mcl{E}\to \mcl{D}$ the projection $\mcl{C}\times_{\mcl{D}}\mcl{E}\to \mcl{E}$ is also an isofibration.
\end{lemma}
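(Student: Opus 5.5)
The plan is to unwind the definitions of the fiber product and of an isofibration, and then lift the given isomorphism componentwise. Write $G:\mcl{E}\to\mcl{D}$ for the given functor and $p:\mcl{C}\times_{\mcl{D}}\mcl{E}\to\mcl{E}$ for the projection. By the construction of the pullback recalled above, an object of $\mcl{C}\times_{\mcl{D}}\mcl{E}$ is a pair $(N,E)$ with $F(N)=G(E)$ (an equality of objects of $\mcl{D}$). A morphism $(M,E')\to(N,E)$ is a pair $(f,g)$ of morphisms with $F(f)=G(g)$ in $\mcl{D}$. Composition and identities are taken componentwise.

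We verify the isofibration condition directly. Fix an object $(N,E)$ and an isomorphism $\bar{\alpha}:E'\to E=p(N,E)$ in $\mcl{E}$. Apply $G$ to get an isomorphism $G(\bar{\alpha}):G(E')\to G(E)=F(N)$ in $\mcl{D}$. Since $F$ is an isofibration, there is an isomorphism $\alpha_0:M\to N$ in $\mcl{C}$ with $F(\alpha_0)=G(\bar{\alpha})$. In particular $F(M)=G(E')$ as objects, because the source of $F(\alpha_0)$ equals the source of $G(\bar\alpha)$. So $(M,E')$ is an object of the fiber product, and $(\alpha_0,\bar{\alpha}):(M,E')\to(N,E)$ is a morphism in it.

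It remains to see that $(\alpha_0,\bar{\alpha})$ is an isomorphism in the fiber product, and not merely componentwise. We take the candidate inverse $(\alpha_0^{-1},\bar{\alpha}^{-1})$. This is a morphism of the fiber product, since
\[
F(\alpha_0^{-1})=F(\alpha_0)^{-1}=G(\bar{\alpha})^{-1}=G(\bar{\alpha}^{-1}).
\]
Because composition is componentwise, it is a two-sided inverse. Finally $p(\alpha_0,\bar{\alpha})=\bar{\alpha}$, so the isomorphism $\bar\alpha$ lifts along $p$, as required.

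There is no real obstacle here. The only points requiring care are that the fiber product is strict, meaning objects agree on the nose in $\mcl{D}$, and that the lift supplied by $F$ matches $G(\bar\alpha)$ exactly, so that the source object $(M,E')$ genuinely lies in the pullback. Both follow from the strict lifting property in the definition of an isofibration.
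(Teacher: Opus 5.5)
Your proof is correct. The paper gives no argument and only calls this "trivial to check"; your direct verification (lift $G(\bar\alpha)$ along $F$, pair the lift with $\bar\alpha$, and check that the componentwise inverse is again a morphism of the strict pullback) is exactly the intended one.
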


For us there are two important properties of isofibrations.  First, isofibrations appear via restriction along faithful functors.

\begin{proposition}[{\cite[\href{https://kerodon.net/tag/01F3}{01F3}]{kerodon}}]\label{prop:inj_isofib}
Let $i:\mcl{C}_0\to \mcl{C}$ be a faithful functor which is also injective on objects.  Then for any category $\mcl{D}$ the restriction functor
\[
i^\ast:\Fun(\mcl{C},\mcl{D})\to \Fun(\mcl{C}_0,\mcl{D})
\]
is an isofibration.
\end{proposition}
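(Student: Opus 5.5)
We need to show: given a functor $G:\mcl{C}\to\mcl{D}$, a functor $G_0:\mcl{C}_0\to\mcl{D}$ (so that $i^\ast G=G\circ i$), and a natural isomorphism $\bar\alpha:G_0\to G\circ i$, there is a functor $G'$ on $\mcl{C}$ with $G'\circ i=G_0$ and a natural isomorphism $\alpha:G'\to G$ whose restriction along $i$ is $\bar\alpha$. We build $G'$ directly and define $\alpha$ alongside it.

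First, on objects. Since $i$ is injective on objects, each object $X$ of $\mcl{C}$ lies over at most one object of $\mcl{C}_0$. If $X=i(M)$, put $G'(X)=G_0(M)$ and $\alpha_X=\bar\alpha_M:G_0(M)\to G(i(M))$. Otherwise put $G'(X)=G(X)$ and $\alpha_X=\mathrm{id}$. In both cases $\alpha_X:G'(X)\to G(X)$ is an isomorphism.

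Next, on morphisms. For $f:X\to Y$ in $\mcl{C}$ define
\[
G'(f)=\alpha_Y^{-1}\circ G(f)\circ\alpha_X .
\]
Functoriality follows immediately from that of $G$, because the inner $\alpha$'s cancel. Naturality of $\alpha$ holds by construction.

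It remains to check that $G'\circ i=G_0$ on morphisms. Let $g:M\to N$ be a morphism in $\mcl{C}_0$. Then
\[
G'(i(g))=\bar\alpha_N^{-1}\circ G(i(g))\circ\bar\alpha_M ,
\]
and this equals $G_0(g)$ by naturality of $\bar\alpha$. We also need $G'$ to be well defined on morphisms coming from $\mcl{C}_0$. Faithfulness ensures that each morphism of $\mcl{C}$ between objects in the image comes from at most one morphism of $\mcl{C}_0$. In fact, since $G'$ is defined on every morphism of $\mcl{C}$ by the single formula above, no conflict can arise, so faithfulness is not even needed at this step. Finally, $i^\ast\alpha=\bar\alpha$ by the choice of $\alpha$ on image objects.

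The only delicate point is well-definedness on objects, which is exactly where injectivity on objects is used: an object of $\mcl{C}$ that is the image of two distinct objects $M\neq M'$ would force two incompatible choices, $G_0(M)$ versus $G_0(M')$. Everything else is bookkeeping. By the symmetry remark following the definition of isofibration, this lifting of isomorphisms into $i^\ast G$ suffices to conclude that $i^\ast$ is an isofibration.
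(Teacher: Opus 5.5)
Your proof is correct, but it takes a different route from the paper, which gives no argument of its own here. The paper simply cites Kerodon. Its hypothesis, ``faithful and injective on objects,'' is exactly the condition making $i$ isomorphic to a subcategory inclusion, equivalently making the nerve of $i$ a monomorphism of simplicial sets; it is phrased to match that reference.

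You instead give the direct transport-of-structure construction:
\begin{itemize}
\item replace the values of $G$ on objects in the image of $i$ by those of $G_0$, which injectivity on objects makes well defined;
\item conjugate every morphism by the isomorphisms $\alpha_X$;
\item check $G'\circ i=G_0$ one morphism at a time using naturality of $\bar\alpha$.
\end{itemize}
This is self-contained and proves a slightly sharper statement. As you correctly observe, faithfulness is never used, because $G'$ is defined on all morphisms of $\mcl{C}$ by a single formula. If $i(g)=i(g')$, then naturality of $\bar\alpha$ already forces $G_0(g)=G_0(g')$, so no consistency condition arises.

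Citing the reference buys brevity. It also places the result under the general principle that restriction along a monomorphism is an isofibration, which persists in the $\infty$-categorical setting. None of that extra generality is needed in this paper, whose uses of the result (Corollary \ref{cor:inj_isofib}, the $n$-fold lemma, and the proof of Theorem \ref{thm:abs_v1}) involve injective inclusions and isomorphism-closed classes of functors.

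Your closing appeal to the symmetry of the definition is unnecessary. The paper's definition asks precisely for lifts of isomorphisms $\bar\alpha\colon X\to F(N)$, and that is what you construct, with $X=G_0$ and $N=G$.
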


In the cocomplete setting we can restrict our attention to cocontinuous functors.

\begin{corollary}\label{cor:inj_isofib}
Let $i:\mcl{C}_0\to \mcl{C}$ be a finitely cocontinuous functor from a finitely cocomplete category $\mcl{C}_0$ to a cocomplete category $\mcl{C}$.  Suppose also that $i$ is faithful and injective on objects.  Then for any cocomplete category $\mcl{D}$ the restriction functor
\[
i^\ast:\Fun_{cc}(\mcl{C},\mcl{D})\to \Fun_{fcc}(\mcl{C}_0,\mcl{D})
\]
is an isofibration.
\end{corollary}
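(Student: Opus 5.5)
We deduce Corollary \ref{cor:inj_isofib} from Proposition \ref{prop:inj_isofib} by checking that the lifts it produces can be taken cocontinuous. So fix a cocontinuous functor $G:\mcl{C}\to\mcl{D}$ and a natural isomorphism $\bar{\alpha}:F\to i^\ast G=G\circ i$ in $\Fun_{fcc}(\mcl{C}_0,\mcl{D})$. Proposition \ref{prop:inj_isofib} supplies a functor $\tilde F:\mcl{C}\to\mcl{D}$ and a natural isomorphism $\alpha:\tilde F\to G$ with $i^\ast\tilde F=F$ and $i^\ast\alpha=\bar\alpha$. It remains to see that $\tilde F$ lies in $\Fun_{cc}(\mcl{C},\mcl{D})$; once this holds, $\alpha$ is an isomorphism in $\Fun_{cc}(\mcl{C},\mcl{D})$, since that category is full in $\Fun(\mcl{C},\mcl{D})$, and this lifts $\bar\alpha$.

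For concreteness, recall the construction behind Proposition \ref{prop:inj_isofib}. On objects, set $\tilde F(i(M))=F(M)$, which is well defined because $i$ is injective on objects, and set $\tilde F(X)=G(X)$ for $X$ not in the image of $i$. On components, set $\alpha_{i(M)}=\bar\alpha_M$ and $\alpha_X=\mathrm{id}$ otherwise. On morphisms, define $\tilde F(f)=\alpha_Y^{-1}\circ G(f)\circ\alpha_X$ for $f:X\to Y$. Functoriality is immediate. The identity $\tilde F(i(g))=F(g)$ is exactly naturality of $\bar\alpha$, and faithfulness of $i$ ensures that no morphism receives two competing prescriptions. We do not need to reprove any of this; we only quote it.

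The key step is the observation that cocontinuity is invariant under natural isomorphism. Suppose $X=\varinjlim_k X_k$ in $\mcl{C}$ with coprojections $\iota_k$. Then the cocone $(\tilde F(\iota_k))_k$ on $\tilde F(X)$ is carried by the isomorphism $\alpha$ onto the cocone $(G(\iota_k))_k$ on $G(X)$. The latter is a colimit cocone because $G$ is cocontinuous. A cocone isomorphic to a colimit cocone is itself a colimit cocone, so $\tilde F$ preserves this colimit. Hence $\tilde F$ is cocontinuous.

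I expect no real obstacle here. The finite cocompleteness hypotheses only serve to make the functor categories $\Fun_{fcc}$ and $\Fun_{cc}$, and the restriction map between them, well defined: restriction along $i$ lands in $\Fun_{fcc}$ because $i$ is finitely cocontinuous. Beyond that, the only point requiring care is that $\Fun_{cc}(\mcl{C},\mcl{D})$ is a full subcategory of $\Fun(\mcl{C},\mcl{D})$ closed under isomorphism, which is precisely what the key step establishes.
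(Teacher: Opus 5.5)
Your argument is correct and is the paper's proof unpacked. The paper obtains the lift in $\Fun(\mcl{C},\mcl{D})$ from Proposition \ref{prop:inj_isofib}, passed to the fiber product $\Fun_{fcc}(\mcl{C}_0,\mcl{D})\times_{\Fun(\mcl{C}_0,\mcl{D})}\Fun(\mcl{C},\mcl{D})$ via Lemma \ref{lem:pb_isofib}, and then uses exactly your key step: $\Fun_{cc}(\mcl{C},\mcl{D})$ sits in that fiber product fully faithfully with image closed under isomorphism. One small inaccuracy, which does not affect the proof: in your conjugation construction, $\tilde F(i(g))=F(g)$ follows from naturality of $\bar\alpha$ alone, so faithfulness of $i$ is not what prevents conflicting prescriptions.
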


\begin{proof}
We have the commuting diagram
\[
\xymatrix{
\Fun_{cc}(\mcl{C},\mcl{D})\ar[rr]\ar[drr] & & \Fun_{fcc}(\mcl{C}_0,\mcl{D})\times_{\Fun(\mcl{C}_0,\mcl{D})}\Fun(\mcl{C},\mcl{D})\ar[d]\\
 & & \Fun_{fcc}(\mcl{C}_0,\mcl{D})
}
\]
in which the top horizontal map is fully faithful and has image stable under isomorphism.  So the result follows by Lemma \ref{lem:pb_isofib} and Proposition \ref{prop:inj_isofib}.
\end{proof}

The second important point about isofibrations is the following.

\begin{proposition}\label{prop:iso_equiv}
Consider a pullback diagram
\[
\xymatrix{
\mcl{C}_0\times_{\mcl{D}}\mcl{C}_1\ar[r]^(.55){p_1}\ar[d]_{p_0} & \mcl{C}_1\ar[d]^{F_1}\\
\mcl{C}_0\ar[r]_{F_0} & \mcl{D}
}
\]
in which one of the $F_i$ is an isofibration.  If $F_0$ is an equivalence then $p_1$ is an equivalence, and if $F_1$ is an equivalence then $p_0$ is an equivalence.
\end{proposition}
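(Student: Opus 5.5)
By the symmetry of the pullback square, it suffices to treat one case. Assume $F_0$ is an equivalence and show that $p_1$ is an equivalence; the other case is obtained by swapping the roles of the indices. The proof checks that $p_1$ is essentially surjective and fully faithful directly from the definition of the fiber product. I do not assume anything about which of the $F_i$ is the isofibration, so both sub-cases are handled at each step.

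\emph{Essential surjectivity.} Let $M_1$ be an object of $\mcl{C}_1$. Since $F_0$ is essentially surjective, there is an object $M_0$ in $\mcl{C}_0$ and an isomorphism $\bar\alpha:F_0(M_0)\to F_1(M_1)$ in $\mcl{D}$.

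\begin{itemize}
\item If $F_1$ is the isofibration, lift $\bar\alpha^{-1}$ to an isomorphism $\alpha:M_1'\to M_1$ in $\mcl{C}_1$ with $F_1(M_1')=F_0(M_0)$. Then $(M_0,M_1')$ is an object of the fiber product, and $p_1(M_0,M_1')=M_1'\cong M_1$.
\item If $F_0$ is the isofibration, lift $\bar\alpha$ instead to an isomorphism $M_0'\to M_0$ with $F_0(M_0')=F_1(M_1)$. Then $(M_0',M_1)$ lies in the fiber product and maps to $M_1$ on the nose.
\end{itemize}

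\emph{Full faithfulness.} Fix objects $(M_0,M_1)$ and $(N_0,N_1)$ of the fiber product, with $F_0(M_0)=F_1(M_1)=X$ and $F_0(N_0)=F_1(N_1)=Y$. The morphisms between them form the set of pairs $(f_0,f_1)$ with $F_0(f_0)=F_1(f_1)$ in $\Hom_{\mcl{D}}(X,Y)$, and $p_1$ sends such a pair to $f_1$. Since $F_0$ is fully faithful, the map $\Hom_{\mcl{C}_0}(M_0,N_0)\to\Hom_{\mcl{D}}(X,Y)$ is a bijection. So for each $f_1$ there is exactly one $f_0$ with $F_0(f_0)=F_1(f_1)$, which means $p_1$ is bijective on hom-sets. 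This step does not use the isofibration hypothesis at all.

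The main subtlety is essential surjectivity, which is the only place the isofibration is needed. Without it, the isomorphism $F_0(M_0)\cong F_1(M_1)$ cannot be replaced by a strict equality, and the strict fiber product may simply have too few objects. Combining the two steps, $p_1$ is fully faithful and essentially surjective, hence an equivalence. The statement for $p_0$ when $F_1$ is an equivalence follows by symmetry.
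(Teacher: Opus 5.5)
Your proof is correct and follows essentially the same route as the paper: full faithfulness of $p_1$ comes directly from full faithfulness of $F_0$, and essential surjectivity comes from lifting the isomorphism between $F_0(M_0)$ and $F_1(M_1)$ along whichever $F_i$ is the isofibration. The paper phrases the second case as ``$F_0$ becomes surjective on objects,'' which is the same argument. One small slip: your labels $\bar\alpha$ and $\bar\alpha^{-1}$ are swapped in the two cases. For example, an isomorphism $M_1'\to M_1$ lifts $\bar\alpha$, not $\bar\alpha^{-1}$. This is harmless, since the isofibration condition is symmetric under taking inverses.
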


\begin{proof}
Suppose $F_0$ is an equivalence, for example.  Then $F_0$ is fully faithful and one sees directly from the description for the fiber product that the projection
\[
p_1:\mcl{C}_0\times_{\mcl{D}}\mcl{C}_1\to \mcl{C}_1
\]
is fully faithful.  So one only has to deal with essential surjectivity.
\par

Supposing $F_1$ is an isofibration, then for any $M$ in $\mcl{C}_1$ with image $X$ in $\mcl{D}$, we have an isomorphism $X'\to X$ between $X$ and an object in the image of $\mcl{C}_0$.  This follows from essential surjectivity of $F_0$.  We can lift this isomorphism to an isomorphism $M'\to M$ in $\mcl{C}_1$, and by assumption $X'$ lifts to some $M'_0$ in $\mcl{C}_0$.  We then have the object $(M'_0,M')$ in the fiber product which is sent to $M'$ by $p_1$.  Hence we have essential surjectivity.
\par

On the other hand, if $F_0$ is an isofibration, then essential surjectivity implies that $F_0$ is actually surjective on objects.  In this case the projection $p_1$ is surjective onto the objects in $\mcl{C}_1$, and again we have essential surjectivity.
\end{proof}

\section{Extending monoidality to the Ind-completion}\label{sec:Ind-comp-monoidal}

Below we consider any model for the Ind-completion for which the structure map $\mcl{C}_0\to \opn{Ind}(\mcl{C}_0)$ is injective.  So, for example, we can consider the standard construction from Section \ref{sect:stand_ind} or the relative construction from Section \ref{sect:rel_ind}.

\begin{proposition}\label{prop:ind_product}
Suppose $\mcl{C}_0$ is (braided) monoidal, essentially small, and finitely cocomplete. Suppose also that the product on $\mcl{C}_0$ commutes with finite colimits in each factor.  Then there exists a unique (braided) monoidal structure on $\opn{Ind}(\mcl{C}_0)$ for which the following hold:
\begin{enumerate}
\item[(a)] The inclusion $\opn{incl}:\mcl{C}_0\to \opn{Ind}(\mcl{C}_0)$ is (braided) monoidal with trivial tensor compatibility, $\opn{incl}(M)\ot\opn{incl}(N)\overset{=}\to \opn{incl}(M\ot N)$.\vspace{1mm}
\item[(b)] The product on $\opn{Ind}(\mcl{C}_0)$ commutes with small colimits in each factor.
\end{enumerate}
\end{proposition}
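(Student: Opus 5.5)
The plan is to build the product on $\opn{Ind}(\mcl{C}_0)$ by applying the universal property of Theorem \ref{thm:ind_univ} twice, in each variable separately. Corollary \ref{cor:inj_isofib} will make the extension strictly agree with the given product on $\mcl{C}_0$, not merely up to isomorphism. The coherence data (associator, unitors, braiding) are then extended by full faithfulness of restriction.

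\emph{Step 1: the bifunctor.} Write $\iota:\mcl{C}_0\to \opn{Ind}(\mcl{C}_0)$ for the inclusion. Currying gives $\mcl{C}_0\to \Fun_{fcc}(\mcl{C}_0,\opn{Ind}(\mcl{C}_0))$, $M\mapsto \iota(M\ot -)$. Each such functor is finitely cocontinuous, because $\ot$ commutes with finite colimits in each factor and $\iota$ is finitely cocontinuous (\cite[Corollary 6.1.6]{kashiwaraschapira06}). By Theorem \ref{thm:ind_univ}, the restriction
\[
\Fun_{cc}(\opn{Ind}(\mcl{C}_0),\opn{Ind}(\mcl{C}_0))\to\Fun_{fcc}(\mcl{C}_0,\opn{Ind}(\mcl{C}_0))
\]
is an equivalence. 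By Corollary \ref{cor:inj_isofib} it is also an isofibration, since $\iota$ is injective and faithful.

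By Proposition \ref{prop:iso_equiv}, the pullback of this restriction along $\mcl{C}_0\to\Fun_{fcc}(\mcl{C}_0,\opn{Ind}(\mcl{C}_0))$ projects by an equivalence onto $\mcl{C}_0$. Choosing a quasi-inverse section, and using the isofibration property to adjust it, gives a functor $\mcl{C}_0\to \Fun_{cc}(\opn{Ind}(\mcl{C}_0),\opn{Ind}(\mcl{C}_0))$ whose restriction is exactly $M\mapsto \iota(M\ot-)$. Concretely, for each $M$ we extend strictly and transport morphisms by full faithfulness.

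The target $\Fun_{cc}(\opn{Ind}(\mcl{C}_0),\opn{Ind}(\mcl{C}_0))$ is cocomplete, with colimits computed pointwise. The functor just constructed is finitely cocontinuous in $M$: evaluation at each object of $\mcl{C}_0$ is finitely cocontinuous, and restriction is an equivalence, hence reflects colimits. Applying Theorem \ref{thm:ind_univ} and Corollary \ref{cor:inj_isofib} again, now in the first variable, yields
\[
\ot:\opn{Ind}(\mcl{C}_0)\times\opn{Ind}(\mcl{C}_0)\to\opn{Ind}(\mcl{C}_0),
\]
cocontinuous in each factor and restricting on the nose to $\iota(M\ot N)$ on $\mcl{C}_0\times\mcl{C}_0$. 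This gives (a) at the level of the bifunctor, and (b).

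\emph{Step 2: coherence data.} The unit is $\iota(\1)$. The functors $(X\ot Y)\ot Z$ and $X\ot(Y\ot Z)$ are cocontinuous in each of the three variables. Iterating Theorem \ref{thm:ind_univ} shows that restriction from multi-cocontinuous functors on $\opn{Ind}(\mcl{C}_0)^{\times 3}$ to multi-finitely-cocontinuous functors on $\mcl{C}_0^{\times 3}$ is fully faithful. So the associator of $\mcl{C}_0$, whiskered by $\iota$, extends uniquely to a natural isomorphism; likewise the unitors and, in the braided case, the braiding.

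The pentagon, triangle and hexagon identities are equalities of natural transformations between multi-cocontinuous functors of four (resp.\ two or three) variables. They hold after restriction to $\mcl{C}_0$, hence they hold on $\opn{Ind}(\mcl{C}_0)$ by faithfulness of restriction. By construction the structure maps of $\iota$ are identities, which gives (a).

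\emph{Step 3: uniqueness.} Any monoidal structure satisfying (a) and (b) has a product that is cocontinuous in each variable and restricts to $\iota(\ot)$. Its associator, unitors and braiding restrict to those of $\mcl{C}_0$. Full faithfulness of the multivariable restriction then forces a canonical identification with the constructed structure. Uniqueness is therefore up to unique monoidal isomorphism restricting to the identity on $\mcl{C}_0$. Under the strict reading it is on the nose, once the choices of extension are made via the isofibration lifts.

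The main obstacle is Step 1, specifically the passage to two variables. One must check that $M\mapsto \widetilde{M\ot-}$ is genuinely finitely cocontinuous into the functor category, and that the lifting can be done strictly (on the nose on $\mcl{C}_0\times\mcl{C}_0$) rather than up to isomorphism. The plan is to handle this with the pullback/isofibration formalism of Proposition \ref{prop:iso_equiv}: obtain an equivalence from the pullback onto $\mcl{C}_0$, then pick a section compatible with the strict restriction.
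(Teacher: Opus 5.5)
Your proposal is correct and follows essentially the paper's route. Your two-step currying is just the $n=2$ case of the paper's lemma on $n$-fold cocontinuous functors, which is proved by the same induction, and both the strict extension via the isofibration/pullback argument and the lifting and checking of coherence data by full faithfulness match the paper's proof. One small correction: drop the closing remark about on-the-nose uniqueness, since, as in the paper, uniqueness holds only up to unique isomorphism restricting to the identity on $\mcl{C}_0$ (different lifts off $\mcl{C}_0$ give isomorphic but not equal structures).
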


This result is well-known and oft employed, at least in the abstract.  We highlight below how one lifts the product on $\mcl{C}_0$ to $\opn{Ind}(\mcl{C}_0)$ in a \emph{strict} manner, as required by point (a) above.

\subsection{$n$-fold cocontinuous functors}

For cocomplete categories $\mcl{D}_i$ and $\mcl{E}$ we take
\[
\Fun_{n\text{-}cc}(\prod_{i=1}^n\mcl{D}_i,\mcl{E})\ \subseteq\ \Fun(\prod_{i=1}^n\mcl{D}_i,\mcl{E})
\]
the full subcategory of functors which commute with small colimits in each factor.  To clarify, we mean that at each index $j$ and each tuple of objects
\[
\{X_1,\dots,X_{j-1},X_{j+1},\dots,X_n\}
\]
the functor
\[
F(X_1,\dots,X_{j-1},-,X_{j+1},\dots,X_n):\mcl{D}_j\to \mcl{E}
\]
is cocontinuous.  We adopt a similar notation in the finitely cocomplete setting.

\begin{lemma}
Let $\mcl{C}_i$ be essentially small and finitely cocomplete, and $\mcl{C}_i\to \opn{Ind}(\mcl{C}_i)$ be Ind-completions for which the structure maps are injective.  For each $n>0$, and each cocomplete category $\mcl{D}$, restriction provides an equivalence
\[
\Fun_{n\text{-}cc}(\prod_{i=1}^n\opn{Ind}(\mcl{C}_i),\mcl{D})\overset{\sim}\to \Fun_{n\text{-}fcc}(\prod_{i=1}^n\mcl{C}_i,\mcl{D})
\]
which is furthermore an isofibration.
\end{lemma}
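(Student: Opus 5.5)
The plan is to argue by induction on $n$, using the currying identification of multi-variable functors with functors into functor categories. The case $n=1$ is Theorem \ref{thm:ind_univ} for the equivalence and Corollary \ref{cor:inj_isofib} for the isofibration property. The structure maps are injective on objects, faithful and finitely cocontinuous, by \cite[Corollary 6.1.6]{kashiwaraschapira06}.

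For the inductive step I would write
\[
\Fun_{n\text{-}cc}(\textstyle\prod_{i=1}^n\opn{Ind}(\mcl{C}_i),\mcl{D})\cong \Fun_{cc}\big(\opn{Ind}(\mcl{C}_n),\Fun_{(n-1)\text{-}cc}(\textstyle\prod_{i<n}\opn{Ind}(\mcl{C}_i),\mcl{D})\big).
\]
The target category $\Fun_{(n-1)\text{-}cc}(\prod_{i<n}\opn{Ind}(\mcl{C}_i),\mcl{D})$ is cocomplete. Its colimits are computed pointwise in $\mcl{D}$, and a pointwise colimit of functors that are cocontinuous in each variable is again cocontinuous in each variable, because colimits commute with colimits. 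The same description applies to $\Fun_{(n-1)\text{-}fcc}(\prod_{i<n}\mcl{C}_i,\mcl{D})$. The curried condition "cocontinuous in the last variable" matches exactly "pointwise cocontinuous in the last variable", since colimits in the functor category are pointwise.

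The restriction functor then factors as two steps. First restrict along $\opn{Ind}(\mcl{C}_n)\leftarrow \mcl{C}_n$ with the inner category fixed; by Theorem \ref{thm:ind_univ} and Corollary \ref{cor:inj_isofib}, with $\mcl{D}$ replaced by the cocomplete inner category, this is an equivalence and an isofibration. Second, apply the inductive hypothesis, which says that restriction $R:\Fun_{(n-1)\text{-}cc}(\prod_{i<n}\opn{Ind}(\mcl{C}_i),\mcl{D})\to \Fun_{(n-1)\text{-}fcc}(\prod_{i<n}\mcl{C}_i,\mcl{D})$ is an equivalence and an isofibration. Post-composing with $R$ gives
\[
\Fun_{fcc}(\mcl{C}_n,\Fun_{(n-1)\text{-}cc}(\cdots))\to \Fun_{fcc}(\mcl{C}_n,\Fun_{(n-1)\text{-}fcc}(\cdots)).
\]
This map is an equivalence because post-composition with an equivalence is one. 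It lands in finitely cocontinuous functors because $R$ preserves finite colimits, these being pointwise and $\prod\mcl{C}_i\to\prod\opn{Ind}(\mcl{C}_i)$ being componentwise. It is an isofibration because isomorphisms of functors can be lifted objectwise along the isofibration $R$; naturality and finite cocontinuity of the lifted functor follow from $R$ being fully faithful. Composites of equivalences are equivalences and composites of isofibrations are isofibrations, which finishes the induction.

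The main obstacle is the bookkeeping needed to keep the identifications strict rather than merely up to equivalence, so that the isofibration property really transfers. The currying bijection must be an isomorphism of categories, not just an equivalence, and each intermediate functor category must be checked to be exactly the expected full subcategory. The step I would verify most carefully is the objectwise lift in the post-composition step. There one must check that the lifted assignment $X\mapsto \tilde F(X)$, built from chosen isomorphisms $\tilde F(X)\cong G(X)$, is functorial. It is, because morphisms are transported uniquely through full faithfulness of $R$.
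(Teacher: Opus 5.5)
Your proof is correct. The equivalence half is the paper's argument: induction on $n$ through the currying isomorphism. The right-hand vertical map in the paper's square is exactly your composite. It consists of restriction along $\mcl{C}_n\to\opn{Ind}(\mcl{C}_n)$, which is the $n=1$ case with the cocomplete target $\Fun_{(n-1)\text{-}cc}(\prod_{j<n}\opn{Ind}(\mcl{C}_j),\mcl{D})$, followed by post-composition with the inductive equivalence.

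Where you differ is the isofibration claim. You carry it through the induction, using the objectwise lift along $R$ and the fact that composites of isofibrations are isofibrations. The paper instead gets it in one line, with no induction. It applies Proposition \ref{prop:inj_isofib} to the single functor $\prod_i\mcl{C}_i\to\prod_i\opn{Ind}(\mcl{C}_i)$, which is injective on objects and faithful. It then observes, as in Corollary \ref{cor:inj_isofib}, that the full subcategories of $n$-fold (finitely) cocontinuous functors are closed under natural isomorphism.

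The paper's route makes the strictness bookkeeping you single out as the main obstacle unnecessary. For the equivalence half it is enough that currying is an equivalence, and the isofibration property needs no transport through the currying at all. Your route also works and is carefully checked. Note that the lift along $R$ does not really need full faithfulness: defining the lifted functor on a morphism by conjugating its image under the original functor with the chosen lifted isomorphisms works for any isofibration. Finite cocontinuity of the lift then follows simply because it is isomorphic to a finitely cocontinuous functor.
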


\begin{proof}
Restriction is an isofibration by Proposition \ref{prop:inj_isofib} and the fact that the given classes of functors are stable under natural isomorphism.  We note that any functor category $\Fun(\mcl{C},\mcl{D})$ with cocomplete target is itself cocomplete, with colimits calculated pointwise \cite[Theorem V.3.1]{maclane98}.  The full subcategory of (finitely) cocontinuous functors is a full subcategory in $\Fun(\mcl{C},\mcl{D})$ which is stable under computation of colimits, via the generic commutation relation
\[
\opn{colim}_{\alpha\in A}\opn{colim}_{\beta\in B}Y_{\alpha\beta}=\opn{colim}_{(\alpha,\beta)\in A\times B}Y_{\alpha\beta}=\opn{colim}_{\beta\in B}\opn{colim}_{\alpha\in A}Y_{\alpha\beta}.
\]
In particular, $\Fun_{cc}(\opn{Ind}(\mcl{C}_i),\mcl{D})$ and $\Fun_{fcc}(\mcl{C}_i,\mcl{D})$ are cocomplete.  By induction on $n$, and the adjunction
\[
\Fun_{n\text{-}cc}(\prod_i\opn{Ind}(\mcl{C}_i),\mcl{D}) \cong\Fun_{cc}\big(\opn{Ind}(\mcl{C}_n),\Fun_{(n-1)\text{-}cc}(\prod_{j<n}\opn{Ind}(\mcl{C}_j),\mcl{D})\big),
\]
we see that the $n$-fold cocontinuous functor categories are cocomplete as well.  Cocompleteness of $\Fun_{n\text{-}fcc}(\prod_i \mcl{C}_i,\mcl{D})$ is observed similarly.
\par

The result at general $n$ now follows from the result at $n=1$ and the successive diagrams
\[
\xymatrix{
\Fun_{n\text{-}cc}(\prod_i\opn{Ind}(\mcl{C}_i),\mcl{D})\ar[d]_{\opn{res}} \ar[r]^(.325){\sim}& \Fun_{cc}\big(\opn{Ind}(\mcl{C}_n),\Fun_{(n-1)\text{-}cc}(\prod_{j<n}\opn{Ind}(\mcl{C}_j),\mcl{D})\big)\ar[d]^\sim\\
\Fun_{n\text{-}fcc}(\prod_i\mcl{C}_i,\mcl{D}) \ar[r]^(.375){\sim}&  \Fun_{fcc}\big(\mcl{C}_n,\Fun_{(n-1)\text{-}fcc}(\prod_{j<n}\mcl{C}_j,\mcl{D})\big).
}
\]
\end{proof}

\subsection{Extending monoidal structures}

\begin{proof}[Proof of Proposition \ref{prop:ind_product}]
From the product composed with inclusion
\[
\opn{incl}\circ\ot:\ast\to \Fun_{2\text{-}fcc}(\mcl{C}_0\times\mcl{C}_0,\opn{Ind}(\mcl{C}_0))
\]
we pull back to obtain an equivalence
\[
\{\opn{incl}\circ\ot\}\times_{\Fun_{2\text{-}fcc}(\mcl{C}_0^2,\opn{Ind}(\mcl{C}_0))}\Fun_{2\text{-}cc}(\opn{Ind}(\mcl{C}_0)^2,\opn{Ind}(\mcl{C}_0))\overset{\sim}\to \ast,
\]
by Proposition \ref{prop:iso_equiv}.  Choosing any object in this fiber product gives a bicocontinuous product $\tilde{\ot}$ on $\opn{Ind}(\mcl{C}_0)$ which is unique up to unique isomorphism, and which fits into a strictly commuting diagram
\[
\xymatrix{
\opn{Ind}(\mcl{C}_0)\times\opn{Ind}(\mcl{C}_0)\ar[r]^(.6){\tilde{\ot}}& \opn{Ind}(\mcl{C}_0)\\
\mcl{C}_0\times\mcl{C}_0\ar[u]\ar[r]_(.6){\ot} & \mcl{C}_0\ar[u].
}
\]
\par

The unit object $\1_{\mcl{C}_0}\to \mcl{C}_0$ gives an object in $\1_{\mcl{C}}=\opn{ind}(\1_{\mcl{C}_0})$ in $\opn{Ind}(\mcl{C}_0)$, and we uniquely lift the left and right unit transformations
\[
\opn{incl}\circ\opn{unit}_{\mcl{C}_0}:\Delta^1\to \Fun_{fcc}(\mcl{C}_0,\mcl{C}_0)\to \Fun_{fcc}\big(\mcl{C}_0,\opn{Ind}(\mcl{C}_0)\big)
\] 
via the equivalences
\[
\{\opn{incl}\circ\opn{unit}_{\mcl{C}_0}\}\times_{\Fun_{fcc}(\mcl{C}_0,\opn{Ind}(\mcl{C}_0))}\Fun_{cc}\big(\opn{Ind}(\mcl{C}_0),\opn{Ind}(\mcl{C}_0)\big)\overset{\sim}\to \Delta^1.
\]
One similarly lifts the associator uniquely to $\opn{Ind}(\mcl{C}_0)$ via the equivalence
\[
\{\opn{incl}\circ\opn{associator}_{\mcl{C}_0}\}\times_{\Fun_{3\text{-}fcc}(\mcl{C}_0^3,\opn{Ind}(\mcl{C}_0))}\Fun_{3\text{-}cc}\big(\opn{Ind}(\mcl{C}_0)^3,\opn{Ind}(\mcl{C}_0)\big)\overset{\sim}\to \Delta^1,
\]
and also lifts the braiding when it is present.  All compatibilities between the units, associator, etc.\ hold for $\opn{Ind}(\mcl{C}_0)$ via cocontinuity of the product, the fact that $\mcl{C}_0$ generates $\opn{Ind}(\mcl{C}_0)$ under colimits, and the fact that all compatibilities are satisfied on the generating subcategory $\mcl{C}_0\hookrightarrow\opn{Ind}(\mcl{C}_0)$.
\end{proof}

\section{Big objects via localization}\label{sect:big-objects}

Consider a dense embedding $\mcl{C}_0\to \mcl{C}$ from a finitely cocomplete, essentially small category to a cocomplete target. We show that, under some restrictive but naturally occurring constraints, the target category $\mcl{C}$ is recovered as a localization of $\opn{Ind}(\mcl{C}_0)$ along a distinguished class of morphisms.  In particular, the embedding $\mcl{C}_0\to \mcl{C}$ has a universal property amongst functors $\mcl{C}_0\to \mcl{D}$ to a cocomplete category $\mcl{D}$.

\subsection{The abstract setting}\label{sect:setting}
We consider the following generic situation: Let $\mcl{B}$ be a cocomplete abelian category with two full subcategories
\[
\mcl{C}_0\subseteq \mcl{C}\subseteq \mcl{B},
\]
where $\mcl{C}_0$ is essentially small and stable under finite colimits in $\mcl{B}$, and $\mcl{C}$ consists of all objects $M$ in $\mcl{B}$ which are obtained from $\mcl{C}_0$ via small colimits.  We suppose additionally that:
\begin{itemize}
\item[(C1)] All objects in $\mcl{C}_0$ are finitely generated (when considered as objects in the ambient category $\mcl{B}$).
\item[(C2)] $\mcl{C}_0$ is stable under taking quotients in $\mcl{B}$.
\item[(C3)] Filtered colimits in $\mcl{B}$ are exact.
\end{itemize}
In this setting an object $M$ is in $\mcl{C}$ if and only if $M$ is expressed as a reduced filtered colimit of objects in $\mcl{C}_0$.  Indeed, in any concrete situation, $M$ is in $\mcl{C}$ if and only if $M$ is the union $\bigcup_\alpha M_\alpha$ of its subobjects which lie in $\mcl{C}_0$.

\begin{definition}\label{def:M-Can}
For an object $M$ in $\mcl{C}$ as above, the canonical $\mcl{C}_0$-atlas for $M$ is the colimit diagram
\begin{equation}\label{eq:330}
M_{\opn{Can}}:K_M\star \{\infty\}\to \mcl{C},
\end{equation}
where $K_M$ is the full subcategory in the overcategory $\mcl{C}_{/M}$ consisting of injections $f_\alpha:M_\alpha\to M$ from an object in $\mcl{C}_0$.
\end{definition}

The map \eqref{eq:330} here is the apparent one.  Namely, $M_{\opn{Can}}|_{K_M}$ is just the restriction of the forgetful functor $\mcl{C}_{/M}\to \mcl{C}$, the cone point $\infty$ is sent to $M$, and the unique map $\{f_\alpha:M_\alpha\to M\}\to \infty$ at each $f_{\alpha}$ in $K_M$ is sent to $f_\alpha:M_\alpha\to M$.

{\color{violet}
\begin{remark}
Of course, in order for $M_{\opn{Can}}$ to be a $\mcl{C}_0$-atlas we need $K_M$ to be essentially small. In a concrete setting one can observe essential smallness of $K_M$ directly. In general, one can argue essential smallness by embedding $\mcl{C}$ into the functor category $\Fun(\mcl{C}_0,\mbb{Z}\text{-mod})$ then observing that the analog of $K_M$ in $\Fun(\mcl{C}_0,\mbb{Z}\text{-mod})$ is essentially small. We leave the details to the interested reader.
\end{remark}}

\begin{remark}\label{rem:C3}
Condition (C3) says that $\mcl{B}$ is Grothendieck abelian, save for the possible absence of a generator. Practically speaking, filtered colimits in $\mcl{B}$ are exact whenever $\mcl{B}$ admits a cocontinuous, faithful, exact functor to a Grothendieck abelian category $\mcl{L}$.  We can take for example $\mcl{L}$ equal to $\opn{Vect}$, $R$-mod for a ring $R$, or $\opn{Qcoh}(X)$ for a scheme $X$.
\end{remark}

\subsection{The functor $\opn{Ind}(\mcl{C}_0)\to \mcl{C}$ is a localization}

Throughout the subsection we suppose we are in the setting of Section \ref{sect:setting}, and we adopt the relative expression $\opn{Ind}(\mcl{C}_0)=\mcl{C}_{\opn{atlas}}$.

\begin{proposition}\label{prop:348}
For $\opn{Ind}(\mcl{C}_0)^{\opn{red}}$ the full subcategory in $\opn{Ind}(\mcl{C}_0)$ spanned by objects with reduced atlases (see Definition \ref{def:reduced}), the forgetful functor $\pi:\opn{Ind}(\mcl{C}_0)\to \mcl{C}$ restricts to an equivalence
\[
\opn{Ind}(\mcl{C}_0)^{\opn{red}}\overset{\sim}\to \mcl{C}.
\]
\end{proposition}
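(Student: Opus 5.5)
We show that the restricted functor $\pi:\opn{Ind}(\mcl{C}_0)^{\opn{red}}\to\mcl{C}$ is essentially surjective and fully faithful.

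\emph{Essential surjectivity.} By the discussion in Section \ref{sect:setting}, every $M$ in $\mcl{C}$ carries the canonical atlas $M_{\opn{Can}}$ of Definition \ref{def:M-Can}. First we check that $K_M$ is filtered, using (C2), stability of $\mcl{C}_0$ under finite colimits, and the abelian structure of $\mcl{B}$:
\begin{itemize}
\item[(i)] given two subobjects $M_\alpha,M_\beta\subseteq M$ in $\mcl{C}_0$, the image of $M_\alpha\oplus M_\beta\to M$ is a quotient of an object of $\mcl{C}_0$, hence lies in $\mcl{C}_0$, and contains both;
\item[(ii)] parallel maps in $K_M$ coincide, since everything sits injectively over $M$.
\end{itemize}
Next we check that $M_{\opn{Can}}$ is a colimit diagram. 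Write $M$ as some filtered colimit of objects $N_\gamma$ of $\mcl{C}_0$. The images of $N_\gamma\to M$ lie in $K_M$, so $M$ is the filtered union of the subobjects in $K_M$. By (C3) the canonical map $\varinjlim_{K_M}M_\alpha\to M$ is then a monomorphism, and it is an epimorphism because every $N_\gamma$ factors through it.

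Reducedness holds because the structure maps are monomorphisms: a factorization $M_\alpha\to\bar M_\alpha\to M$ with $M_\alpha\to\bar M_\alpha$ epi forces $M_\alpha\to\bar M_\alpha$ to be mono as well, hence an isomorphism in the abelian category $\mcl{B}$.

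\emph{Full faithfulness.} Let $M_A$ and $N_B$ be reduced atlases. First we show that reducedness makes all structure maps $N_\beta\to N$ monic. Factor $N_\beta\to N$ through its image $\bar N_\beta\subseteq N$. This image lies in $\mcl{B}$, and in fact in $\mcl{C}_0$ by (C2). Reducedness then says $N_\beta\to\bar N_\beta$ is an isomorphism. Hence $N$ is the filtered union of the subobjects $N_\beta$, with monic transition maps.

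Since
\[
\Hom_{\opn{Ind}}(M_A,N_B)=\varprojlim_\alpha\varinjlim_\beta\Hom(M_\alpha,N_\beta)
\quad\text{and}\quad
\Hom_{\mcl{C}}(M,N)=\varprojlim_\alpha\Hom(M_\alpha,N),
\]
it suffices to show that
\[
\varinjlim_\beta\Hom(M_\alpha,N_\beta)\to\Hom(M_\alpha,N)
\]
is bijective for each $\alpha$. In other words, we need objects of $\mcl{C}_0$ to be ``compact relative to reduced unions.''

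\emph{Injectivity.} Since $K$ is filtered and the maps $N_\beta\to N$ are monic, two maps $M_\alpha\to N_\beta$ that agree after composing to $N$ already agree in $N_\beta$.

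\emph{Surjectivity.} This is the main step. Take $f:M_\alpha\to N$ and set $I=\opn{im}(f)$, which lies in $\mcl{C}_0$ by (C2). By (C3), $I=\bigcup_\beta (I\cap N_\beta)$ is a filtered union of subobjects. We cannot apply (C1) directly, because $I\cap N_\beta$ need not lie in $\mcl{C}_0$. Instead, pull back along $M_\alpha\to I$: set $P_\beta=f^{-1}(N_\beta)\subseteq M_\alpha$. Exactness of filtered colimits (C3) gives $M_\alpha=\bigcup_\beta P_\beta$.

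Now use the canonical atlas of the previous step to write $M_\alpha$ as a reduced filtered union of subobjects lying in $\mcl{C}_0$. Each such subobject $Q\subseteq P_\beta$, and the subobjects $Q\subseteq M_\alpha$ contained in some $P_\beta$, form a filtered system whose union is still $M_\alpha$. Here we use that $N$ is covered by the images of the $N_\beta$, and that $Q$ (finitely generated by (C1)) lands in a single $N_\beta$ via the same argument applied to the canonical atlas. By (C1) this union stabilizes, so $M_\alpha\subseteq P_\beta$ for some $\beta$, i.e.\ $f$ factors through $N_\beta$.

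The delicate point, which is the main obstacle, is arranging the stabilization argument so that it only ever invokes (C1) on reduced filtered unions of $\mcl{C}_0$-objects. The strategy is to pass through images and quotients, which stay in $\mcl{C}_0$ by (C2), rather than through intersections, which need not.
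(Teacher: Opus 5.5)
Most of your argument follows the paper. You reduce full faithfulness to bijectivity of $\varinjlim_\beta\Hom(M_\alpha,N_\beta)\to\Hom(M_\alpha,N)$. You get injectivity because reducedness and (C2) make the structure maps $N_\beta\to N$ monic. For surjectivity you pull back along $f$ to subobjects $P_\beta=f^{-1}(N_\beta)\subseteq M_\alpha$ with $M_\alpha=\bigcup_\beta P_\beta$ by (C3). The paper calls these $L_\beta=N_\beta\times_N M_\alpha$, and it proves the slightly stronger statement in which only $N_B$ is reduced, which it reuses for Proposition \ref{prop:438}. Essential surjectivity via $M_{\opn{Can}}$ is also what the paper does; your extra checks that $K_M$ is filtered and that $M_{\opn{Can}}$ is a colimit are fine.

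\textbf{The gap is in your final stabilization paragraph.} You claim that the $\mcl{C}_0$-subobjects $Q\subseteq M_\alpha$ contained in some $P_\beta$ have union $M_\alpha$. Your justification is that each such $Q$ ``lands in a single $N_\beta$ via the same argument.'' That is circular: it is the very compactness statement being proved, now with $Q$ in place of $M_\alpha$, and $Q=M_\alpha$ is itself a candidate. Given (C1), ``this union is $M_\alpha$'' is equivalent to the conclusion you want. Since $\mcl{C}_0$ is not closed under subobjects, nothing else guarantees that the $P_\beta$ are covered by $\mcl{C}_0$-subobjects.

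\textbf{How to fix it.} The detour is unnecessary, because it rests on a misreading of (C1). Definition \ref{def:fg} quantifies over all reduced filtered colimit presentations in the ambient category $\mcl{B}$, with arbitrary terms, not only presentations by objects of $\mcl{C}_0$. Your system $\{P_\beta\}$ is filtered with monic structure maps $P_\beta\to M_\alpha$. It is therefore reduced: an epi $P_\beta\to\bar P$ through which a mono factors is also mono, hence an isomorphism. So (C1) applies directly and gives an index $\gamma$ with $P_\gamma=M_\alpha$, i.e.\ $f$ factors through $N_\gamma$. This is exactly the paper's argument. You could equally have applied (C1) to $I=\bigcup_\beta(I\cap N_\beta)$ at the point where you said you could not. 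Replacing your last paragraph with this one line completes the proof.
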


\begin{proof}
Let $M_A$ be arbitrary in $\opn{Ind}(\mcl{C}_0)$ and $N_B$ be reduced.  We claim that the map
\[
\pi:\Hom_{\opn{Ind}(\mcl{C}_0)}(M_A,N_B)\to \Hom_{\mcl{C}}(M,N)
\]
is an isomorphism.  We have in both cases, by the universal property of a colimit,
\[
\Hom_{\opn{Ind}(\mcl{C}_0)}(M_A,N_B)=\varprojlim_\alpha \Hom_{\opn{Ind}(\mcl{C}_0)}(M_\alpha,N_B)
\]
and
\[
\Hom_{\mcl{C}}(M,N)=\varprojlim_\alpha\Hom_{\mcl{C}}(M_\alpha,N).
\]
So it suffices to show that the natural map
\[
\Hom_{\opn{Ind}(\mcl{C}_0)}(M_\alpha,N_B)=\varinjlim_\beta\Hom_{\mcl{C}}(M_\alpha,N_\beta)\to \Hom_{\mcl{C}}(M_\alpha,N)
\]
is an isomorphism whenever $M_\alpha$ is in $\mcl{C}_0$.
\par

Now, since the atlas for $N_B$ is reduced, and $\mcl{C}_0$ is stable under quotients, all of the structure maps $N_\beta\to N$ are injective and hence all of the maps
\[
\Hom_{\mcl{C}}(M_\alpha,N_\beta)\to \Hom_{\mcl{C}}(M_\alpha,N)
\]
are injective.  It follows that the map from the directed colimit
\begin{equation}\label{eq:380}
\varinjlim_\beta\Hom_{\mcl{C}}(M_\alpha,N_\beta)\to \Hom_{\mcl{C}}(M_\alpha,N)
\end{equation}
is injective.  The image is simply the subspace of maps $M_\alpha\to N$ which factor through some $N_\beta$.  We claim that finite generation of $M_\alpha$ implies that all maps to $N$ do in fact factor through some $N_\beta$.
\par

For a given map $t:M_\alpha\to N$, we have at each $N_\beta$ the pullback map
\[
t_\beta:L_\beta=N_\beta\times_N M_\alpha\to M_\alpha
\]
which is necessarily injective, and we claim that the corresponding map from the filtered colimit $\varinjlim_\beta L_\beta\to M_\alpha$ is an isomorphism.  Indeed, each pullback $L_\beta\to M_\alpha$ is the kernel of the sequence $M_{\alpha}\to N\to N/N_{\beta}$ and, by exactness of filtered limits in the ambient category $\mcl{B}$, the induced map from the colimit $\varinjlim_{\beta}L_{\beta}\to M_{\alpha}$ is injective with quotient $\varinjlim_{\beta}M/L_\beta\cong M/\varinjlim_\beta L_\beta$.  Such exactness also implies that the map $M/\varinjlim_\beta L_\beta\to N/\varinjlim_\beta N_\beta=N/N=0$ is injective.  In particular, the injection $\varinjlim_{\beta}L_{\beta}\to M_{\alpha}$ is also surjective, and thus an isomorphism, as claimed.
\par

By finite generation of $M_\alpha$, we can now find an index $\gamma$ at which the map $L_\gamma\to M_\alpha$ is an isomorphism.  Equivalently, we can find an index $\gamma$ at which the map $t:M\to N$ factors
\[
\xymatrix{
N_\gamma\ar[r] & N\\
 & M.\ar@{-->}[ul]^{\exists !}\ar[u]_t
}
\]
We now see that the injection \eqref{eq:380} is in fact an isomorphism, and establish bijectivity of the forgetful map
\[
\pi:\Hom_{\opn{Ind}(\mcl{C}_0)}(M_A,N_B)\to \Hom_{\mcl{C}}(M,N)
\]
whenever $N_B$ is reduced.
\par

By the above information we have that the restricted forgetful functor
\[
\opn{Ind}(\mcl{C}_0)^{\opn{red}}\to \mcl{C}
\]
is fully faithful.  For essential surjectivity we note that any object $M$ in $\mcl{C}$ has its canonical atlas, which is reduced, so that $M$ is obtained as the image of $M_{\opn{Can}}$.
\end{proof}

Now, each object $M$ in $\mcl{C}$ has its canonical atlas $M_{\opn{Can}}$, and the operation $M\mapsto M_{\opn{Can}}$ assembles into a functor
\[
\opn{Can}:\mcl{C}\to \opn{Ind}(\mcl{C}_0).
\]
To make things very clear, for any map $\xi:M\to N$ in $\mcl{C}$ and any injection $i_\alpha:M_\alpha\to M$ from an object $M_\alpha$ in $\mcl{C}_0$ we have the injection $\xi_\alpha:M_{\alpha}/\opn{ker}(\xi i_\alpha)\to N$ and diagram
\[
\xymatrix{
M_\alpha\ar[r]^(.35){\xi_\alpha}\ar[d] & M_\alpha/\opn{ker}(\xi i_\alpha)\ar[d]\\
M\ar[r]_{\xi} & N,
}
\]
so that $\xi$ and $i_\alpha$ specify an element in the colimit $\xi_\alpha\in \varinjlim_\beta \Hom_{\mcl{C}}(M_\alpha,N_\beta)$. For each map $M_\alpha\to M_\gamma\to M$ in our canonical system we have a diagram
\[
\xymatrix{
M_\alpha\ar[r]^(.35){\xi_\alpha}\ar[d] & M_\alpha/\opn{ker}(\xi i_\alpha)\ar[d]\\
M_\gamma\ar[r]_(.35){\xi_\gamma} & M_\gamma/\opn{ker}(\xi i_\gamma),
}
\]
so that these $\xi_\alpha$ assemble into an element $\opn{Can}(\xi)\in \Hom_{\opn{Ind}(\mcl{C}_0)}(M_{\opn{Can}},N_{\opn{Can}})$.

\begin{proposition}\label{prop:438}
The canonical atlas functor $\opn{Can}:\mcl{C}\to \opn{Ind}(\mcl{C}_0)$ is a fully faithful right adjoint to the forgetful functor $\pi:\opn{Ind}(\mcl{C}_0)\to \mcl{C}$.  The counit of this adjunction is the identity $id_{N}:\pi(N_{\opn{Can}})\to N$, and the unit of the adjunction
\[
u=u_{M_A}:M_A\to M_{\opn{Can}}
\]
is the unique map in $\opn{Ind}(\mcl{C}_0)$ determined by the diagrams
\[
\xymatrix{
M_\alpha\ar[r]\ar[d]_{f_\alpha} & \bar{M}_\alpha\ar[d]^{\bar{f}_\alpha}\\
M\ar[r]_= & M,
}
\]
where $\bar{M}_\alpha=M_\alpha/\opn{ker}(f_\alpha)$ and $\bar{f}_\alpha$ the inclusion induced by $f_\alpha$.
\end{proposition}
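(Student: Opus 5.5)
The plan is to deduce everything from Proposition \ref{prop:348}, using the fact that the canonical atlas $N_{\opn{Can}}$ is reduced for every $N$ in $\mcl{C}$. Proposition \ref{prop:348} shows that for any $M_A$ in $\opn{Ind}(\mcl{C}_0)$ and any reduced $N_B$, the map
\[
\pi:\Hom_{\opn{Ind}(\mcl{C}_0)}(M_A,N_B)\to \Hom_{\mcl{C}}(M,N)
\]
is a bijection. Taking $N_B=N_{\opn{Can}}$ and recalling $\pi(N_{\opn{Can}})=N$, this reads
\[
\Hom_{\opn{Ind}(\mcl{C}_0)}(M_A,\opn{Can}(N))\cong\Hom_{\mcl{C}}(\pi(M_A),N).
\]
This is precisely the bijection exhibiting $\opn{Can}$ as right adjoint to $\pi$, with counit $id_N$, once we check two things: that the bijection is natural, and that it is compatible with the definition of $\opn{Can}$ on morphisms.

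First, I check that $\opn{Can}$ is a functor and that $\pi\circ\opn{Can}=id_{\mcl{C}}$ strictly. For $\xi:M\to N$, the components $\xi_\alpha$ define the element $\opn{Can}(\xi)$, and its image under $\pi$ is the map $M\to N$ induced on colimits by the maps $M_\alpha\to M_\alpha/\ker(\xi i_\alpha)\hookrightarrow N$. That composite is $\xi i_\alpha$, so $\pi(\opn{Can}(\xi))=\xi$. Faithfulness of $\pi$ on maps into reduced objects (Proposition \ref{prop:348}) then gives $\opn{Can}(\eta\xi)=\opn{Can}(\eta)\opn{Can}(\xi)$ and $\opn{Can}(id)=id$, since both sides have the same image under $\pi$. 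Full faithfulness of $\opn{Can}$ follows at once from the bijection with $M_A=M_{\opn{Can}}$: the map $\Hom_{\mcl{C}}(M,N)\to\Hom(M_{\opn{Can}},N_{\opn{Can}})$ is a section of the bijection $\pi$, hence itself a bijection.

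Next, the adjunction. I define $u_{M_A}$ as the preimage under the bijection above of $id_M\in\Hom_{\mcl{C}}(M,M)$, with target $M_{\opn{Can}}$. Concretely, $\bar M_\alpha=M_\alpha/\ker f_\alpha$ lies in $\mcl{C}_0$ by (C2), and $\bar f_\alpha$ is an injection, hence an object of $K_M$. So the displayed squares define a compatible family in $\varprojlim_\alpha\varinjlim_\beta\Hom(M_\alpha,M_\beta)$, and its image under $\pi$ is $id_M$. By injectivity of $\pi$ this is the unique such map.

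Naturality of $u$ in $M_A$ and the triangle identities are then checked by applying $\pi$: every equation to be verified is between morphisms whose target is a canonical, hence reduced, atlas, and $\pi$ sends both sides to the same map. The identity $\pi(u)\circ\ldots$, respectively $\opn{Can}(id)\circ u_{N_{\opn{Can}}}=id$, reduces to $id=id$.

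The only step requiring real care is well-definedness of $\opn{Can}(\xi)$ as an element of the limit-colimit, that is, compatibility of the $\xi_\alpha$ along transition maps $M_\alpha\to M_\gamma$. This holds because the map $M_\alpha/\ker(\xi i_\alpha)\to M_\gamma/\ker(\xi i_\gamma)$ is the inclusion of subobjects of $N$, which is a morphism in $K_N$. Everything else is formal bookkeeping powered by the injectivity of $\pi$ into reduced targets.
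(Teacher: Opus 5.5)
Your proposal is correct and follows essentially the same route as the paper: both rest on the bijection $\pi:\Hom_{\opn{Ind}(\mcl{C}_0)}(M_A,N_B)\to\Hom_{\mcl{C}}(M,N)$ for reduced $N_B$, established in the proof of Proposition \ref{prop:348}. Both deduce full faithfulness of $\opn{Can}$ from its being a section of $\pi$, and both obtain $u$ as the unique lift of $id_M$. You additionally spell out what the paper leaves implicit, namely functoriality of $\opn{Can}$, naturality of $u$ and the triangle identities, and each of these checks is correctly carried out by applying $\pi$ to maps with reduced target.
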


\begin{proof}
As the canonical atlas is reduced, one sees that $\opn{Can}$ has image in the full subcategory of objects in $\opn{Ind}(\mcl{C}_0)$ with reduced atlases, and furthermore provides a section for the forgetful functor
\[
\opn{Ind}(\mcl{C}_0)^{\opn{red}}\to \mcl{C}.
\]
Since this forgetful functor is an equivalence, by Proposition \ref{prop:348}, we see that $\opn{Can}$ is fully faithful.
\par

For each $N_B$ in $\opn{Ind}(\mcl{C}_0)$ we argued in the proof of Proposition \ref{prop:348} that the forgetful map
\[
\Hom_{\opn{Ind}(\mcl{C}_0)}(N_B,M_{\opn{Can}})\to \Hom_{\mcl{C}}(N,M)
\]
is an isomorphism, since the canonical atlas is reduced.  This already gives the proposed adjunction, with counit given by the identity, and gives the unit $M_A\to M_{\opn{Can}}$ as the unique lift of the identity in $\mcl{C}$ to $\opn{Ind}(\mcl{C}_0)$.  This lift is precisely the map $u$ from the statement.
\end{proof}

As a corollary we find that $\mcl{C}$ is in fact a \emph{localization} of the Ind-completion. (We elaborate on this point in Section \ref{sect:frac_w} below).  We consider the collection of morphisms
\begin{equation}\label{eq:W}
W=\big\{
	\text{maps $\xi$ in $\opn{Ind}(\mcl{C}_0)$ for which $\pi(\xi)$ is an isomorphism in }\mcl{C}
\big\},
\end{equation}
and for an arbitrary category $\mcl{D}$ we take
\[
\Fun(\opn{Ind}(\mcl{C}_0),\mcl{D})_W\ \subseteq\ \Fun(\opn{Ind}(\mcl{C}_0),\mcl{D})
\]
the full subcategory of functors which send maps in $W$ to isomorphisms in $\mcl{D}$.

\begin{corollary}\label{cor:470}
For $W$ as above, and arbitrary $\mcl{D}$, restriction along $\pi$ provides an equivalence
\[
\pi^\ast:\Fun(\mcl{C},\mcl{D})\overset{\sim}\to \Fun(\opn{Ind}(\mcl{C}_0),\mcl{D})_W.
\]
\end{corollary}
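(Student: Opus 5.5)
The plan is to use the adjunction $\pi\dashv\opn{Can}$ from Proposition \ref{prop:438}. Its counit $\pi\circ\opn{Can}\to id_{\mcl{C}}$ is the identity, so $\pi\circ\opn{Can}=id_{\mcl{C}}$ strictly. Its unit is $u:id\to \opn{Can}\circ\pi$, and each component $u_{M_A}:M_A\to M_{\opn{Can}}$ lies in $W$, because $\pi(u_{M_A})=id_M$. This is the standard situation of a reflective-type localization, where a functor admits a fully faithful adjoint. The inverse equivalence to $\pi^\ast$ will be $\opn{Can}^\ast:F\mapsto F\circ\opn{Can}$.

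First, $\pi^\ast$ lands in $\Fun(\opn{Ind}(\mcl{C}_0),\mcl{D})_W$. This is tautological: if $\xi\in W$, then $\pi(\xi)$ is an isomorphism, hence so is $G(\pi(\xi))$ for any $G:\mcl{C}\to\mcl{D}$.

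Next, the composite $\opn{Can}^\ast\pi^\ast=(\pi\circ\opn{Can})^\ast$ is the identity on $\Fun(\mcl{C},\mcl{D})$, since $\pi\circ\opn{Can}=id_{\mcl{C}}$.

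For the other composite, take $F$ in $\Fun(\opn{Ind}(\mcl{C}_0),\mcl{D})_W$. The components $F(u_{M_A}):F(M_A)\to F(\opn{Can}(\pi(M_A)))=\pi^\ast\opn{Can}^\ast(F)(M_A)$ form a natural transformation $F\to \pi^\ast\opn{Can}^\ast F$, by naturality of $u$. Each component is an isomorphism, because $u_{M_A}\in W$ and $F$ inverts $W$. The transformation is also natural in $F$, since it is just whiskering $F$ with $u$. Hence $\pi^\ast\opn{Can}^\ast\cong id$ on $\Fun(\opn{Ind}(\mcl{C}_0),\mcl{D})_W$. Together with the previous step, this shows $\pi^\ast$ is an equivalence with quasi-inverse $\opn{Can}^\ast$, as claimed.

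The only point requiring care is that $\opn{Can}$ is a functor and $u$ is natural. This is supplied by Proposition \ref{prop:438}: $\opn{Can}$ is a right adjoint, and the unit of an adjunction is automatically natural. So I do not expect a genuine obstacle; the content lies entirely in Propositions \ref{prop:348} and \ref{prop:438}. As an alternative route, one could verify full faithfulness of $\pi^\ast$ directly from the fact that $\pi$ is surjective on objects and full. That argument is not needed, since the explicit inverse already settles the claim.
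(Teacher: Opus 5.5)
Your proposal is correct and follows essentially the same route as the paper: the quasi-inverse is $\opn{Can}^\ast$, with $\pi\circ\opn{Can}=id_{\mcl{C}}$ coming from the identity counit, and $F\cong F\circ\opn{Can}\circ\pi$ given by $F(u)$, which is invertible because the unit components lie in $W$. You also spell out the routine checks the paper leaves implicit, namely that $\pi^\ast$ lands in the $W$-inverting functors and that the isomorphism is natural in $F$.
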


\begin{proof}
The functor $\opn{Can}:\mcl{C}\to \opn{Ind}(\mcl{C}_0)$ provides the inverse
\[
\opn{Can}^\ast:\Fun(\opn{Ind}(\mcl{C}_0),\mcl{D})_W\to \Fun(\mcl{C},\mcl{D}),\ \ T\mapsto T[W^{-1}]:=T\circ\opn{Can}
\]
to $\pi$.  Indeed, for any functor $T:\opn{Ind}(\mcl{C}_0)\to \mcl{D}$ which inverts the maps in $W$ the localized functor $T[W^{-1}]$ fits into a specified 2-diagram
\[
\xymatrix{
	& \mcl{C}\ar[dr]^{T[W^{-1}]}\\
\opn{Ind}(\mcl{C}_0)\ar[ur]^{\pi}\ar[rr]_T & & \mcl{D} 
}
\]
where the required natural isomorphism $T[W^{-1}]\circ \pi\cong T$ is provided by the unit transformation
\[
T(u):T[W^{-1}]\circ \pi=T\circ (\opn{Can}\circ \pi)\overset{\sim}\to T.
\]
Here we note that the unit maps $u:M_A\to M_{\opn{Can}}$ themselves are in $W$, so that $T(u)$ is in fact a natural isomorphism.
\end{proof}

\begin{definition}
For a situation
\[
\mcl{C}_0\subseteq \mcl{C}\subseteq \mcl{B}
\]
as in Section \ref{sect:setting}, we take $W=W_{\mcl{C}}$ in $\opn{Ind}(\mcl{C}_0)$ the class of all maps $\xi$ in the Ind-completion for which $\pi:\opn{Ind}(\mcl{C}_0)\to \mcl{C}$ sends $\xi$ to an isomorphism.  We call $W$ the class of $\mcl{C}$-relative equivalences in $\opn{Ind}(\mcl{C}_0)$.
\end{definition}

In order to gain better control over the localization map $\opn{Ind}(\mcl{C}_0)\to \mcl{C}$, we introduce an intermediate realization of the localization via a calculus of fractions.

\subsection{Localization via a calculus of fractions}
\label{sect:frac_w}

Given a general category $\mcl{E}$ and a class of maps $T$ in $\mcl{E}$, a localization of $\mcl{E}$ relative to $T$ is any pairing of a category $\mcl{E}'$ with a functor $F:\mcl{E}\to \mcl{E}'$ for which restriction along $F$ provides an equivalence
\[
F_{\ast}:\Fun(\mcl{E}',\mcl{D})\overset{\sim}\to \Fun(\mcl{E},\mcl{D})_T.
\]
Here $\mcl{D}$ is arbitrary and, as above, $\Fun(\mcl{E},\mcl{D})_T$ is the full subcategory spanned by functors in $\Fun(\mcl{E},\mcl{D})$ which send maps in $T$ to isomorphisms in $\mcl{D}$.  Of course, Corollary \ref{cor:470} says that the functor $\pi:\opn{Ind}(\mcl{C}_0)\to \mcl{C}$ realizes $\mcl{C}$ as a localization of $\Ind(\mcl{C}_0)$ relative to the class $W$.
\par

As with the Ind-completion, some models for the localization are better than others.  Below we provide an advantageous realization of the localization $\opn{Ind}(\mcl{C}_0)[W^{-1}]$ via a \emph{calculus of fractions} (see Proposition \ref{prop:frac_universal}).

\begin{remark}
The calculus of fractions construction given below deviates from that of standard references due to certain issues with co/equalizers (cf.\ \cite[\href{https://stacks.math.columbia.edu/tag/04VC}{04VC}]{stacks}).  The methods employed below are applicable to localizations which appear via reflexive subcategories, rather than those which appear via classes of maps which satisfy a certain Ore localizing condition. 
\end{remark}

To begin, for morphisms in our proposed calculus we consider equivalence classes of pairs
\[
s^{-1}f:M_A\overset{f}\to N'_C\overset{s}\leftarrow N_B
\]
in which $s$ is a $\mcl{C}$-relative equivalence.  Each such pair has a uniquely determined map $f_s:M_A\to N_{\opn{Can}}$ which fits into a diagram
\begin{equation}\label{eq:518}
\xymatrix{
	& N'_C\ar[d] & \\
M_A\ar[r]_{f_s}\ar[ur]^f & N_{\opn{Can}}& N_B\ar[l]^u\ar[ul]_s,
}
\end{equation}
and we say two pairs $s^{-1}f,t^{-1}g:M_A\to N_B$ are equivalent if $f_s=g_t$.  In this case we simply write
\[
s^{-1}f=t^{-1}g.
\]

\begin{lemma}\label{lem:530}
There is an equivalence of fractions $s^{-1}f=t^{-1}g$ if and only if, after applying the forgetful functor $\pi:\opn{Ind}(\mcl{C}_0)\to \mcl{C}$, we have $\pi(s)^{-1}\pi(f)=\pi(t)^{-1}\pi(g)$.
\end{lemma}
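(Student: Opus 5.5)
The plan is to reduce the equality $f_s=g_t$ to an equality in $\mcl{C}$ using the adjunction of Proposition \ref{prop:438}. By Proposition \ref{prop:438}, and by the computation in the proof of Proposition \ref{prop:348}, the forgetful map
\[
\pi:\Hom_{\opn{Ind}(\mcl{C}_0)}(M_A,N_{\opn{Can}})\to \Hom_{\mcl{C}}(M,N)
\]
is a bijection, because $N_{\opn{Can}}$ is reduced. Hence $f_s=g_t$ holds if and only if $\pi(f_s)=\pi(g_t)$ in $\Hom_{\mcl{C}}(M,N)$. It therefore suffices to prove that $\pi(f_s)=\pi(s)^{-1}\pi(f)$ for every fraction $s^{-1}f$; the same identity applied to $t^{-1}g$ then gives the lemma in both directions.

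To identify $\pi(f_s)$, apply $\pi$ to the commuting diagram \eqref{eq:518}. The unit $u:N_B\to N_{\opn{Can}}$ satisfies $\pi(u)=id_N$ by Proposition \ref{prop:438}. Write $v:N'_C\to N_{\opn{Can}}$ for the vertical map in \eqref{eq:518}. Commutativity of the right-hand triangle, $v\circ s=u$, gives $\pi(v)\circ\pi(s)=id_N$. Since $s\in W$, the map $\pi(s)$ is an isomorphism, so $\pi(v)=\pi(s)^{-1}$. Commutativity of the left-hand triangle, $f_s=v\circ f$, then yields
\[
\pi(f_s)=\pi(v)\circ\pi(f)=\pi(s)^{-1}\pi(f),
\]
which is what we wanted.

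The only point needing care is the existence and uniqueness of $v$, and with it the well-definedness of $f_s$. The map $v$ is the unique lift to $\opn{Ind}(\mcl{C}_0)$ of $\pi(s)^{-1}:N'\to N$. This lift exists and is unique by the same bijectivity of $\pi$ on maps into the reduced object $N_{\opn{Can}}$. Uniqueness also forces $v\circ s=u$, since both maps lift $id_N$. So the diagram \eqref{eq:518} is determined by $s$ and $f$, and I expect no genuine obstacle beyond making this explicit.
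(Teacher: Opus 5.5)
Your proof is correct and follows the paper's argument. The paper likewise uses bijectivity of $\pi$ on maps into $N_{\opn{Can}}$ (from the adjunction with identity counit) and applies $\pi$ to diagram \eqref{eq:518} to obtain $\pi(f_s)=\pi(s)^{-1}\pi(f)$; your identification of the vertical map as the unique lift of $\pi(s)^{-1}$ spells out a step the paper leaves implicit.
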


\begin{proof}
The map
\[
\pi:\Hom_{\opn{Ind}(\mcl{C}_0)}(M_A,N_{\opn{Can}})\to \Hom_{\mcl{C}}(M,N)
\]
is an isomorphism, since $\opn{Can}$ is right adjoint to $\pi$ with counit transformations given by the identity.  Applying $\pi$ to the diagram \eqref{eq:518} tells us $\pi(f_s)=\pi(s)^{-1}\pi(f)$.  Hence $f_s=g_t$ if and only if $\pi(f_s)=\pi(g_t)$ if and only if $\pi(s)^{-1}\pi(f)=\pi(t)^{-1}\pi(g)$.
\end{proof}

We define the set
\[
\Hom_{\mcl{F}_W}(M_A,N_B)=\left\{
\begin{array}{c}
\text{The collection of fractions }\\
s^{-1}f:M_A\to N_B,\ \text{taken up to equivalence}
\end{array}\right\}.
\]
By Lemma \ref{lem:530} the forgetful functor $\pi:\opn{Ind}(\mcl{C}_0)\to \mcl{C}$ induces a bijection
\begin{equation}\label{eq:}
\pi:\Hom_{\mcl{F}_W}(M_A,N_B) \overset{\sim}\to \Hom_{\mcl{C}}(M,N).
\end{equation}

\begin{lemma}\label{lem:703}
Any pair of maps $N'_C\overset{s}\leftarrow N_B\overset{h}\to L_D$ in which $s$ is a $\mcl{C}$-relative equivalence fits into a diagram
\[
\xymatrix{
	& L'_E\\
N'_C\ar[ur]^g &  N_B\ar[l]^s\ar[r]_h &  L_D\ar[ul]_t
}
\]
in which $t$ is a $\mcl{C}$-relative equivalence.
\end{lemma}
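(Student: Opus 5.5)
The plan is to take the most economical completion: push $s$ and $h$ along the unit map to canonical atlases, and use the adjunction of Proposition \ref{prop:438} to produce the missing leg. Concretely, I would set $L'_E = L_{\opn{Can}}$ and take $t = u_{L_D}: L_D \to L_{\opn{Can}}$ to be the unit of the adjunction. Proposition \ref{prop:438} tells us that $\pi(u_{L_D})$ is the identity of $L$, so $t$ lies in $W$ and is a $\mcl{C}$-relative equivalence.

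It then remains to construct $g: N'_C \to L_{\opn{Can}}$ with $g s = t h$. The relevant fact, established in the proof of Proposition \ref{prop:348}, is that for any $X_A$ the map
\[
\pi:\Hom_{\opn{Ind}(\mcl{C}_0)}(X_A,L_{\opn{Can}})\to\Hom_{\mcl{C}}(X,L)
\]
is a bijection, because the canonical atlas is reduced. Since $s$ is a $\mcl{C}$-relative equivalence, $\pi(s): N \to N'$ is an isomorphism in $\mcl{C}$. I would therefore define $g$ as the unique lift of the morphism $\pi(h)\circ\pi(s)^{-1}: N' \to L$ to a map $N'_C \to L_{\opn{Can}}$.

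To check that the square commutes, compare $g s$ and $t h$, which are both maps $N_B \to L_{\opn{Can}}$. Applying $\pi$ gives
\[
\pi(g)\pi(s)=\pi(h)\pi(s)^{-1}\pi(s)=\pi(h), \qquad \pi(t)\pi(h)=\mathrm{id}_L\,\pi(h)=\pi(h).
\]
The bijection above, applied now with source $N_B$, then forces $g s = t h$.

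I do not expect a real obstacle. The one point to handle carefully is that faithfulness is used only for maps whose target is the canonical (reduced) atlas. This is why the target must be $L_{\opn{Can}}$ rather than an arbitrary $L'_E$; with that choice every step reduces to Propositions \ref{prop:348} and \ref{prop:438}.
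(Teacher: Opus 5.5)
Your proof is correct and is essentially the paper's. The paper also takes $L'_E=L_{\opn{Can}}$ and $t=u_{L_D}$, and writes $g$ as the explicit composite $\opn{Can}(\pi(h))\opn{Can}(\pi(s))^{-1}u_{N'_C}$, which is exactly the unique lift of $\pi(h)\pi(s)^{-1}$ that you use; your verification of $gs=th$ via the bijection $\Hom_{\opn{Ind}(\mcl{C}_0)}(N_B,L_{\opn{Can}})\cong\Hom_{\mcl{C}}(N,L)$ supplies a step the paper leaves implicit.
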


\begin{proof}
Take for example $L'_E=L_{\opn{Can}}$, $t$ the unit of adjunction $t=u_{L_D}$, and $g$ the composite
\[
g=\opn{Can}(\pi(h))\opn{Can}(\pi(s))^{-1}u_{N'_C}:N'_C\to N'_{\opn{Can}}\cong N_{\opn{Can}}\to L_{\opn{Can}}.
\]
\end{proof}

Applying $\pi$ so a square as in Lemma \ref{lem:703} gives a square
\[
\xymatrix{
& L'\\
N'\ar[ur]^{\pi(g)} &  N\ar[l]^{\pi(s)}\ar[r]_{\pi(h)} &  L\ar[ul]_{\pi(t)}
}
\]
in which the $\pi(s)$ and $\pi(t)$ are isomorphisms.  Hence $\pi(g)$ is fixed as
\[
\pi(g) = \pi(t)\pi(h)\pi(s)^{-1},\ \ \text{which gives}\ \ \pi(t)^{-1}\pi(g)=\pi(h)\pi(s)^{-1}.
\]
We now define the composition operation
\[
\circ:\Hom_{\mcl{F}_W}(N_B,L_C)\times\Hom_{\mcl{F}_W}(M_A,N_B)\to \Hom_{\mcl{F}_W}(M_A,L_C)
\]
by completing diagrams
\[
r^{-1}h\circ s^{-1}f=
\xymatrixrowsep{3mm}
\xymatrix{
	& & L''_E \\
	&N'_C\ar[ur]^g & & L'_D\ar[ul]_t\ar@{..>}[dl]^h\\
M_A\ar[ur]^f	& & N_B\ar@{..>}[ul]^s & & L_C\ar[ul]_{r}.
}
\]
To see that this composition is well-defined we apply $\pi$ to get
\[
\begin{array}{l}
\pi(r^{-1}h\circ s^{-1}f)=\pi(r)^{-1}\pi(t)^{-1}\pi(g)\pi(f)\vspace{1.5mm}\\
\hspace{2cm}=\pi(r)^{-1}\pi(h)\pi(s)^{-1}\pi(f)=\pi(r^{-1}h)\circ\pi(s^{-1}f).
\end{array}
\]
This identification also tells us that our composition operation is associative.

\begin{definition}
In the setting of Section \ref{sect:setting}, we define the category
\[
\Frac_W=\Frac(\opn{Ind}(\mcl{C}_0),W)
\]
by taking the objects from $\opn{Ind}(\mcl{C}_0)$ and morphisms given by the above fractions
\[
\Hom_{\Frac_W}(M_A,N_B):=\Hom_{\mcl{F}_W}(M_A,N_B).
\]
We consider $\Frac_W$ along with the canonical functor $\opn{loc}:\opn{Ind}(\mcl{C}_0)\to \Frac_W$, $M_A\mapsto M_A$, $f\mapsto id^{-1}f$.
\end{definition}

We have the following strict universal property for our category of fractions.

\begin{proposition}\label{prop:frac_universal}
Given any category $\mcl{D}$, restriction along $\opn{loc}$ provides an equivalence
\[
\opn{loc}^\ast:\Fun(\Frac_W,\mcl{D})\overset{\sim}\to \Fun(\opn{Ind}(\mcl{C}_0),\mcl{D})_W
\]
which is furthermore an isofibration.  Indeed, $\opn{loc}^\ast$ is bijective on both objects and morphisms, and admits a strict inverse.
\end{proposition}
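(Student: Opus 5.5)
We construct a strict inverse $\opn{loc}_!:\Fun(\opn{Ind}(\mcl{C}_0),\mcl{D})_W\to\Fun(\Frac_W,\mcl{D})$ and check that $\opn{loc}^\ast$ is bijective on objects and on morphisms. Once this is done, both the equivalence and the isofibration claims follow at once, since a functor which is bijective on objects and morphisms is an isomorphism of categories. First note that $\opn{loc}$ is the identity on objects. Moreover, every morphism of $\Frac_W$ factors as
\[
s^{-1}f=\opn{loc}(s)^{-1}\circ\opn{loc}(f),
\]
where $\opn{loc}(s)=id^{-1}s$ is invertible in $\Frac_W$ with inverse $s^{-1}id$. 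We verify this by applying $\pi$ and invoking Lemma \ref{lem:530}: the images are $\pi(s)^{-1}\pi(f)$ and $\pi(s)\pi(s)^{-1}=id$.

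\emph{Faithfulness on functors.} Take $F,G:\Frac_W\to\mcl{D}$ with $F\circ\opn{loc}=G\circ\opn{loc}$. They agree on objects, and on every morphism $s^{-1}f$ by the factorization above, because a functor preserves inverses. Hence $\opn{loc}^\ast$ is injective on objects. For natural transformations, a family $\eta_{M_A}$ is natural for $F\circ\opn{loc}$ exactly when it is natural for $F$. Naturality against $\opn{loc}(s)^{-1}$ follows from naturality against $\opn{loc}(s)$ by inverting, and the component families are literally the same. So $\opn{loc}^\ast$ is bijective on hom-sets of the functor categories.

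\emph{Surjectivity on objects.} Given $T$ inverting $W$, we set $\tilde T(M_A)=T(M_A)$ and $\tilde T(s^{-1}f)=T(s)^{-1}T(f)$. The main obstacle is showing this is well defined and functorial. Well-definedness on representatives: the diagram \eqref{eq:518} yields $s^{-1}f=u^{-1}f_s$ as fractions, where $f_s$ is the unique lift and $u:N_B\to N_{\opn{Can}}$ is the unit. Writing $v:N'_C\to N_{\opn{Can}}$ for the vertical map, we have $vf=f_s$ and $vs=u$. The maps $u$ and $s$ lie in $W$, so $v\in W$ as well, since $\pi(v)=\pi(u)\pi(s)^{-1}$. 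Therefore
\[
T(s)^{-1}T(f)=T(vs)^{-1}T(vf)=T(u)^{-1}T(f_s).
\]
This depends only on $f_s$, which is exactly the equivalence class of the fraction.

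For composition, we use the completed square $gs=th$ from the definition of $\circ$ together with the identity
\[
T(t)^{-1}T(g)=T(h)T(s)^{-1},
\]
which follows from $T(g)T(s)=T(t)T(h)$. With this, $\tilde T(r^{-1}h\circ s^{-1}f)=T(rt)^{-1}T(gf)$ rewrites as $T(r)^{-1}T(h)T(s)^{-1}T(f)$. Identities are preserved trivially, and $\tilde T\circ\opn{loc}=T$ on the nose. So $\opn{loc}^\ast$ is bijective on objects, and $\opn{loc}_!:T\mapsto\tilde T$ (with transformations unchanged) is a strict inverse.
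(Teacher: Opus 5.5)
Your proposal is correct and takes the same route as the paper. The strict inverse $T\mapsto\tilde T$, with $\tilde T(s^{-1}f)=T(s)^{-1}T(f)$ and natural transformations left unchanged, is exactly the paper's construction. You also supply checks the paper leaves implicit: well-definedness, functoriality, and bijectivity via the factorization $s^{-1}f=\opn{loc}(s)^{-1}\opn{loc}(f)$.

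One small slip: the composite fraction has denominator $t\circ r$, so that line should read $T(tr)^{-1}T(gf)$ rather than $T(rt)^{-1}T(gf)$.
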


\begin{proof}[Construction]
We have the strict inverse
\[
(\opn{loc}^\ast)^{-1}:\Fun(\opn{Ind}(\mcl{C}_0),\mcl{D})_W\to \Fun(\Frac_W,\mcl{D})
\]
given by taking a functor $F:\opn{Ind}(\mcl{C}_0)\to \mcl{D}$ to the extension $F_W:\Frac_W\to \mcl{D}$ defined by
\[
F_W(M_A)=F(M_A)\ \ \text{and}\ \ F_W(s^{-1}f)=F(s)^{-1}F(f).
\]
For any transformation $\zeta:F\to G$ the maps $\zeta_{M_A}:F(M_A)\to G(M_A)$ are seen to define a transformation $\zeta_W:F_W\to G_W$ due to the diagrams
\[
\xymatrixcolsep{15mm}
\xymatrix{
F(M_A)\ar[r]^{F(f)}\ar[d]_\zeta & F(N'_C)\ar[d]_\zeta & F(N_B)\ar[l]_{F(s)}\ar[d]_{\zeta}\\ 
G(M_A)\ar[r]_{G(f)} & G(N'_C) & G(N_B).\ar[l]^{G(s)} 
}
\]
\end{proof}

Taking this particular fractions model of the localization, Corollary \ref{cor:470} can be reinterpreted as the following.

\begin{corollary}
The unique functor $\pi_W:\Frac_W\to \mcl{C}$ which completes a strictly commuting diagram
\begin{equation}\label{eq:671}
\xymatrix{
	& \Frac_W\ar[dr]^{\pi_W} \\
\opn{Ind}(\mcl{C}_0)\ar[ur]^{\opn{loc}}\ar[rr]_\pi & & \mcl{C}
}
\end{equation}
is an equivalence.
\end{corollary}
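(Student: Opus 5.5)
First I would confirm that $\pi_W$ exists and is unique. The functor $\pi$ sends every map in $W$ to an isomorphism by the definition of $W$, so $\pi$ is an object of $\Fun(\opn{Ind}(\mcl{C}_0),\mcl{C})_W$. By Proposition \ref{prop:frac_universal}, the restriction $\opn{loc}^\ast$ is bijective on objects and has a strict inverse. Hence there is exactly one functor $\pi_W$ with $\pi_W\circ\opn{loc}=\pi$ strictly. Concretely, the construction in that proof gives
\[
\pi_W(M_A)=M,\qquad \pi_W(s^{-1}f)=\pi(s)^{-1}\pi(f).
\]

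Next I would show that $\pi_W$ is an equivalence by checking full faithfulness and essential surjectivity directly.

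\emph{Full faithfulness.} On $\Hom_{\Frac_W}(M_A,N_B)=\Hom_{\mcl{F}_W}(M_A,N_B)$, the map $\pi_W$ is exactly the map $s^{-1}f\mapsto \pi(s)^{-1}\pi(f)$. Lemma \ref{lem:530} and the bijection recorded right after it say this map is a bijection onto $\Hom_{\mcl{C}}(M,N)$.

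For completeness I would also verify that this map is surjective independently. Given $g:M\to N$ in $\mcl{C}$, take the fraction $u_{N_B}^{-1}\circ\opn{Can}(g)u_{M_A}$, that is,
\[
M_A\to N_{\opn{Can}}\overset{u}{\leftarrow} N_B .
\]
Here $u_{N_B}$ lies in $W$ because the counit of the adjunction in Proposition \ref{prop:438} is the identity. Applying $\pi$ returns $g$.

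\emph{Essential surjectivity.} Every $M$ in $\mcl{C}$ equals $\pi_W(M_{\opn{Can}})$, so $\pi_W$ is in fact surjective on objects.

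There is an alternative route through universal properties. Both $\opn{loc}$ and $\pi$ are localizations relative to $W$, by Proposition \ref{prop:frac_universal} and Corollary \ref{cor:470}. Since $\pi_W\circ\opn{loc}=\pi$, two-out-of-three for the restriction equivalences shows that $\pi_W^\ast$ is an equivalence for every $\mcl{D}$. A 2-Yoneda argument then makes $\pi_W$ an equivalence. I would still prefer the direct check above, since every ingredient is already in hand.

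I expect no real obstacle here. The one point to watch is that composition in $\Frac_W$ was defined so that $\pi$ of a composite is the composite of the $\pi$'s. This is what makes $\pi_W$ a functor and not merely an assignment on hom-sets, and it was checked when composition was defined.
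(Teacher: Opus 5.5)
Your proof is correct. It differs mainly in emphasis from the paper, which gives no separate proof and presents the statement as a reinterpretation of Corollary \ref{cor:470}. On that reading, $\opn{loc}$ (Proposition \ref{prop:frac_universal}) and $\pi$ (Corollary \ref{cor:470}) both exhibit their targets as localizations of $\opn{Ind}(\mcl{C}_0)$ at $W$, and $\pi_W\circ\opn{loc}=\pi$ holds strictly, so the comparison functor $\pi_W$ must be an equivalence. That is exactly the universal-property route you list as your alternative.

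Your main argument instead verifies the equivalence by hand:
\begin{itemize}
\item Full faithfulness is the hom-set bijection $\Hom_{\mcl{F}_W}(M_A,N_B)\cong\Hom_{\mcl{C}}(M,N)$ recorded after Lemma \ref{lem:530}. Injectivity is the lemma itself. Your fraction $u_{N_B}^{-1}\circ\opn{Can}(g)u_{M_A}$ supplies surjectivity, using $\pi(u)=id$, which follows from the triangle identity with identity counit.
\item Essential surjectivity is in fact strict surjectivity on objects, via $M\mapsto M_{\opn{Can}}$.
\end{itemize}

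The direct check is more concrete: it exhibits the inverse $\opn{Can}_W$ explicitly, which is the form in which the paper actually uses the result in the proof of Theorem \ref{thm:abs_v1}. The paper's route re-verifies nothing, but it relies on the 2-categorical uniqueness of localizations, i.e.\ the 2-Yoneda step you sketch.
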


To recall again, this map $\pi_W$ is defined on objects by $M_A\mapsto \pi(M_A)=M$ and on morphisms by $s^{-1}f\mapsto \pi(s)^{-1}\pi(f)$.  The (non-strict) inverse to $\pi_W$ is given by taking canonical atlas, and we denote this functor
\[
\opn{Can}_W:\mcl{C}\overset{\sim}\to \Frac_W.
\]

\section{Monoidality of the localization}\label{sec:monoidality}

We've seen that the inclusion $\mcl{C}_0\to \mcl{C}$ realizes $\mcl{C}$ as a localization of the Ind-completion $\opn{Ind}(\mcl{C}_0)$ along the class of ``$\mcl{C}$-relative equivalences.''  When $\mcl{C}_0$ admits a (braided) monoidal structure, we have also seen that this (braided) monoidal structure extends uniquely to a monoidal structure on $\opn{Ind}(\mcl{C}_0)$. In this section we show that the product on $\opn{Ind}(\mcl{C}_0)$ induces a monoidal structure on the localization $\mcl{C}$ whenever the original product functor on $\mcl{C}_0$ is as cocontinuous as possible, so to speak.  In total, we observe a unique extension of the (braided) monoidal structure on $\mcl{C}_0$ to a (braided) monoidal structure on $\mcl{C}$ in this case.

\subsection{The updated setting}\label{sect:update}

We again consider a sequence of full subcategories
\[
\mcl{C}_0\subseteq\mcl{C}\subseteq \mcl{B}
\]
as in Section \ref{sect:setting}.  To recall, $\mcl{B}$ is cocomplete abelian, $\mcl{C}_0$ is essentially small and finitely cocomplete, and $\mcl{C}$ is generated by $\mcl{C}_0$ under filtered colimits.  We also require $\mcl{C}_0$ to be closed under quotients in $\mcl{B}$, and to consist entirely of finitely generated objects.
\par

We suppose now that $\mcl{C}_0$ admits a (braided) monoidal structure $\ot$ which is as cocomplete as possible.  Namely, the tensor product should be finitely cocontinuous in each factor, and we assume the following:
\begin{itemize}
\item[(T)] For any filtered colimit $\varinjlim_\alpha M_\alpha=M$ which occurs in $\mcl{C}_0$, and any $N$ in $\mcl{C}_0$, the natural maps
\[
\varinjlim_\alpha (M_\alpha\ot N)\to M\ot N\ \ \text{and}\ \ \varinjlim_\alpha (N\ot M_\alpha)\to N\ot M
\]
are isomorphisms.
\end{itemize}

\begin{remark}
The colimits referenced in (T) are calculated \emph{in} $\mcl{C}$. We note that the inclusion $\mcl{C}_0\to \mcl{C}$ does not preserve non-finite colimits in general.
\end{remark}

\subsection{Monoidality of the localization \texorpdfstring{$\Frac_W$}{FracW}}\label{subsec:monoidality}

\begin{definition}
Given a monoidal category $\mcl{D}$ and a class of maps $T$, we say that $T$ is stable under the product on $\mcl{D}$ if, for each morphism $s:X\to X'$ in $T$ and object $Y$ in $\mcl{D}$, the maps
\[
s\ot id_Y:X\ot Y\to X'\ot Y\ \ \text{and}\ \ id_Y\ot s:Y\ot X\to Y\ot X'
\] 
are also in $T$.
\end{definition}

To recall, any monoidal structure on $\mcl{C}_0$ which commutes with finite colimits in each factor determines a unique monoidal structure on the Ind-category $\opn{Ind}(\mcl{C}_0)$ which is cocontinuous in each factor, and for which the inclusion $i:\mcl{C}_0\to \opn{Ind}(\mcl{C}_0)$ is monoidal with trivial tensor compatibility (Proposition \ref{prop:ind_product}).
\par

Our main result for the section is the following.

\begin{proposition}\label{prop:loc_tensor}
In the setting of Section \ref{sect:update}, the class $W$ of $\mcl{C}$-relative equivalences in $\opn{Ind}(\mcl{C}_0)$ is stable under the product on $\opn{Ind}(\mcl{C}_0)$.
\end{proposition}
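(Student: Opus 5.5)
Fix a $\mcl{C}$-relative equivalence $s:X_A\to X'_{A'}$ and an object $Y_B$ in $\opn{Ind}(\mcl{C}_0)$. We must show that $\pi(s\ot id_{Y_B})$ is an isomorphism; the case $id\ot s$ is symmetric. The plan is to reduce $s$ to unit maps and then reduce to objects of $\mcl{C}_0$ on both sides.

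\emph{Reduction to unit maps.} By Proposition \ref{prop:438} we have $u_{X'}\circ s=\opn{Can}(\pi(s))\circ u_X$, and $\opn{Can}(\pi(s))$ is an isomorphism in $\opn{Ind}(\mcl{C}_0)$. Since $W$ satisfies two-out-of-three, it therefore suffices to show that every unit map $u_{X_A}\ot id_{Y_B}$ lies in $W$.

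\emph{Reduction to $Y_B=Y$ in $\mcl{C}_0$.} The product on $\opn{Ind}(\mcl{C}_0)$ commutes with filtered colimits in each factor, and $Y_B=\varinjlim_\beta Y_\beta$ in $\opn{Ind}(\mcl{C}_0)$. The functor $\pi$ is cocontinuous. Hence $\pi(u\ot id_{Y_B})$ is the filtered colimit in $\mcl{C}$ of the maps $\pi(u\ot id_{Y_\beta})$, so it suffices to treat $Y\in\mcl{C}_0$.

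\emph{Reduction to $X_A=X_\alpha$ in $\mcl{C}_0$.} Write $X_A=\varinjlim_\alpha X_\alpha$. The unit $u_{X_A}$ is the colimit over $\alpha$ of the maps $X_\alpha\to \bar X_\alpha$, where $\bar X_\alpha=X_\alpha/\ker(f_\alpha)$. It is followed by the comparison map $\varinjlim_\alpha \bar X_\alpha\to X_{\opn{Can}}$ in $\opn{Ind}(\mcl{C}_0)$. That comparison map lies in $W$, and it is a map between two reduced atlases of $X$. By Proposition \ref{prop:348} it is therefore an isomorphism in $\opn{Ind}(\mcl{C}_0)$. Using cocontinuity of $\ot$ and of $\pi$ once more, it suffices to show that, for each $\alpha$,
\[
\varinjlim_\alpha \pi\big(X_\alpha\ot Y\big)\to \varinjlim_\alpha \pi\big(\bar X_\alpha\ot Y\big)
\]
is an isomorphism in $\mcl{C}$. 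Equivalently, we need both sides to compute $X\ot Y$ in the appropriate sense.

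\emph{The main obstacle.} The key step, and the one I expect to be hardest, is the following: for an epimorphism $p:M\to\bar M$ in $\mcl{C}_0$ with kernel $K$, the kernel of $p\ot Y$ dies in the colimit. Here $K$ need not lie in $\mcl{C}_0$, since $\mcl{C}_0$ is not assumed abelian. We handle this with condition (T). Since $K\subseteq M$ lies in $\mcl{C}$, write $K=\bigcup_\gamma K_\gamma$ with $K_\gamma\in\mcl{C}_0$ subobjects. Then $\bar M=\varinjlim_\gamma M/K_\gamma$ is a filtered colimit in $\mcl{C}_0$, because the objects $M/K_\gamma$ lie in $\mcl{C}_0$ by stability under quotients. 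By (T),
\[
\bar M\ot Y\cong\varinjlim_\gamma (M/K_\gamma)\ot Y.
\]
Right exactness of $\ot$ on $\mcl{C}_0$ gives $(M/K_\gamma)\ot Y=\operatorname{coker}(K_\gamma\ot Y\to M\ot Y)$, with the cokernel taken in $\mcl{C}_0$, which agrees with the cokernel in $\mcl{B}$. Consequently the kernel of $M\ot Y\to\bar M\ot Y$ is the union of the images of the maps $K_\gamma\ot Y$.

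Now return to the colimit over $\alpha$. By finite generation of $K_\gamma$, as in the proof of Proposition \ref{prop:348}, each $K_\gamma\subseteq K=\ker(X_\alpha\to X)$ already maps to zero in some $X_{\alpha'}$ under the filtered transition maps, using exactness of filtered colimits (C3). Hence the image of $K_\gamma\ot Y$ maps to zero in $X_{\alpha'}\ot Y$. It follows that the map displayed above is injective, and it is surjective since each $X_\alpha\ot Y\to\bar X_\alpha\ot Y$ is an epimorphism. This completes the argument.
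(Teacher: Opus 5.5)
You have the same outline as the paper: reduce to unit maps via $u_{X'}\circ s=\opn{Can}(\pi s)\circ u_X$ and two-out-of-three, replace the atlas by its reduction using Proposition \ref{prop:348}, then compare $\varinjlim_\alpha X_\alpha\ot Y$ with $\varinjlim_\alpha \bar X_\alpha\ot Y$. But the key step has a genuine gap.

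You write ``Since $K\subseteq M$ lies in $\mcl{C}$, write $K=\bigcup_\gamma K_\gamma$ with $K_\gamma\in\mcl{C}_0$.'' Nothing in the setting of Section \ref{sect:update} gives this. $\mcl{C}$ is only the closure of $\mcl{C}_0$ under filtered colimits, and neither $\mcl{C}_0$ nor $\mcl{C}$ is assumed closed under subobjects. For example, let $\mcl{B}$ be graded $k[x]$-modules with $\deg x=1$, let $\mcl{C}_0$ be the finitely generated modules generated in degrees $\le 0$, and use $\ot_{k[x]}$. All hypotheses (C1)--(C3) and (T) hold. Yet the kernel $xk[x]$ of the structure map $k[x]\to k$ (atlas $k[x]\to k$ for $X=k$) has no nonzero subobject in $\mcl{C}_0$, so it is not in $\mcl{C}$. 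Your step is precisely the closure-under-submodules hypothesis of \cite[Lemma 4.3]{CMY-completions} that this paper is designed to remove. Your use of $K_\gamma\ot Y$, of right exactness via $K_\gamma\to M\to M/K_\gamma$, and of finite generation of $K_\gamma$ all rest on it, so the rest of that paragraph falls with it.

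The repair, which is the paper's argument, is to decompose the quotient rather than the kernel. By (C3), $\ker(X_\alpha\to X)=\bigcup_{\gamma}\ker(f_\gamma)$, where $\gamma:\alpha\to\alpha'$ ranges over the filtered undercategory of $\alpha$. Hence $\bar X_\alpha=\varinjlim_\gamma X_\alpha/\ker(f_\gamma)$. Each $X_\alpha/\ker(f_\gamma)$ lies in $\mcl{C}_0$ by (C2), being a quotient of $X_\alpha$; no kernel ever needs to lie in $\mcl{C}_0$ or $\mcl{C}$. Condition (T) then gives $\bar X_\alpha\ot Y=\varinjlim_\gamma \big(X_\alpha/\ker(f_\gamma)\big)\ot Y$. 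Since $X_\alpha\ot Y\to X_{\alpha'}\ot Y$ factors through $\big(X_\alpha/\ker(f_\gamma)\big)\ot Y$, the structure map $X_\alpha\ot Y\to \varinjlim_\alpha X_\alpha\ot Y$ factors compatibly through $\bar X_\alpha\ot Y$. This gives the inverse to the comparison map. Finite generation is not needed at this step at all; it enters only through Proposition \ref{prop:348}.
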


We prove the proposition below, but let us record our primary consequence of interest.  As one imagines, stability of a given class under the monoidal structure tells us that the localization inherits a monoidal structure from that of the original category.

\begin{corollary}\label{cor:854}
In the setting of Section \ref{sect:update}, there is a unique (braided) monoidal structure on the localization $\Frac_W$ under which the structure map $\opn{loc}:\opn{Ind}(\mcl{C}_0)\to \Frac_W$ is a (braided) monoidal functor with trivial structure maps
\[
\opn{loc}(M_A)\ot \opn{loc}(N_B)\overset{=}\to \opn{loc}(M_A\ot N_B).
\]
\end{corollary}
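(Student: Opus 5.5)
We construct the product on $\Frac_W$ directly from the product on $\opn{Ind}(\mcl{C}_0)$, using Proposition \ref{prop:loc_tensor} (taken as given here) together with the strict universal property of Proposition \ref{prop:frac_universal}. Uniqueness is immediate: $\opn{loc}$ is the identity on objects, every morphism of $\Frac_W$ has the form $s^{-1}f=\opn{loc}(s)^{-1}\opn{loc}(f)$, and so any product making $\opn{loc}$ strictly monoidal is forced on morphisms. The same holds for the unit, associator, unitors and braiding, since these must be the images under $\opn{loc}$ of the corresponding structure maps on $\opn{Ind}(\mcl{C}_0)$. The work is therefore existence.

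For existence, fix $Y$ in $\opn{Ind}(\mcl{C}_0)$. The functor $\opn{loc}\circ(-\ot Y):\opn{Ind}(\mcl{C}_0)\to\Frac_W$ inverts $W$, by Proposition \ref{prop:loc_tensor}. Hence it extends uniquely and strictly to a functor $\Frac_W\to\Frac_W$, namely $s^{-1}f\mapsto (s\ot id_Y)^{-1}(f\ot id_Y)$. The same works for $Y\ot -$.

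To obtain a bifunctor we need the two one-variable actions to commute. Rather than check fraction identities by hand, I would use the faithful comparison $\pi_W:\Frac_W\to\mcl{C}$, which is bijective on Hom-sets. Any identity between composites of fractions can be tested after applying $\pi_W$, where it becomes an identity among the morphisms $\pi(s)^{-1}\pi(f)$ and $\pi(f\ot id)$ in $\mcl{C}$. There the required interchange law $(\phi\ot id)(id\ot\psi)=(id\ot\psi)(\phi\ot id)$ reduces to the interchange law in $\opn{Ind}(\mcl{C}_0)$. Concretely, we define $\phi\ot\psi:=(\phi\ot id)\circ(id\ot\psi)$ and check that this equals the other order: we write $\phi=s^{-1}f$ and $\psi=t^{-1}g$, and use bifunctoriality of $\ot$ on $\opn{Ind}(\mcl{C}_0)$ together with invertibility of $s\ot id$ and $id\ot t$ in $\Frac_W$. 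Functoriality of the resulting bifunctor follows in the same way.

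Next, we take the associator, unitors and braiding on $\Frac_W$ to be $\opn{loc}$ of those on $\opn{Ind}(\mcl{C}_0)$. Naturality with respect to a fraction $s^{-1}f$ follows from naturality with respect to $f$ and $s$ separately, because inverting $s$ preserves commuting squares. The pentagon, triangle and hexagon axioms hold because they hold in $\opn{Ind}(\mcl{C}_0)$ and $\opn{loc}$ is a functor that is strictly compatible with $\ot$ on objects and on morphisms in its image.

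The only delicate step is this well-definedness and interchange check, since fractions are defined only up to the equivalence of Lemma \ref{lem:530}. Passing through the bijections $\pi:\Hom_{\Frac_W}\to\Hom_{\mcl{C}}$ should dispatch it, because every relevant identity in $\Frac_W$ holds if and only if its image under $\pi_W$ holds in $\mcl{C}$.
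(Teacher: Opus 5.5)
Your proof is correct, but it is organized differently from the paper's main argument.

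The paper defines the product on morphisms in one stroke, $(s^{-1}f)\ot(t^{-1}g):=(s\ot t)^{-1}(f\ot g)$. It uses Proposition \ref{prop:loc_tensor} to know that $s\ot t\in W$. It then checks independence of representatives factor by factor, with a diagram passing through $N_{\opn{Can}}\ot L_E$ and $\opn{Can}(N_B\ot L_E)$, i.e.\ by comparing the associated maps into canonical atlases. The associator, unit and braiding are then declared to be inherited from $\opn{Ind}(\mcl{C}_0)$.

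You instead extend each partial functor $\opn{loc}\circ(-\ot Y)$ and $\opn{loc}\circ(X\ot -)$ through the strict universal property of Proposition \ref{prop:frac_universal}, so well-definedness is absorbed into that proposition. You then assemble a bifunctor via the interchange law. That law follows from $(f\ot 1)(1\ot t)=(1\ot t)(f\ot 1)$ in $\opn{Ind}(\mcl{C}_0)$ together with invertibility of $\opn{loc}(s\ot 1)$ and $\opn{loc}(1\ot t)$. The same computation shows your product is given by the paper's formula $\opn{loc}(s\ot t)^{-1}\opn{loc}(f\ot g)$. This is essentially the alternative the paper points to in the remark after the corollary.

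What your route buys is that bifunctoriality, naturality of the inherited structure maps with respect to inverted maps, and uniqueness are all made explicit; the paper leaves these implicit. What the paper's route buys is a direct formula for tensoring fractions, with a hands-on well-definedness check.

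One small comment: the detour through $\pi_W$ is unnecessary, since the interchange identity can be checked directly in $\Frac_W$ using only functoriality of $\opn{loc}$. Note also that $\mcl{C}$ has no tensor product at this stage. So what you test in $\mcl{C}$ is an identity among $\pi(f\ot 1)$, $\pi(s\ot 1)^{-1}$ and so on, not an interchange law in $\mcl{C}$ itself.
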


\begin{proof}
In $\Frac_W$, define the product of objects via the product $M_A\ot N_B$ in $\opn{Ind}(\mcl{C}_0)$.  On morphisms we take
\[
(s^{-1}f)\ot (t^{-1}g):=(s\ot t)^{-1}(f\ot g).
\]
Stability of the product under equivalence in the left-hand factor follows from the diagram
\[
\xymatrix{
	& N'_C\ot L_E\ar[dr] & & N''_D\ot L_E\ar[dl]\\
M_A\ot L_E\ar[rr]_{f_s\ot 1=h_r\ot 1}\ar[ur]^{f\ot 1}\ar[urrr]^(.8){h\ot 1}\ar@{..>}[drr] & & N_{\opn{Can}}\ot L_E\ar@{..>}[d] & & N_B\ot L_E\ar[ll]^{u\ot 1}\ar@{..>}[dll]\ar[ul]_{r\ot 1}\ar[ulll]_(.8){s\ot 1}\\
	& & \opn{Can}(N_B\ot L_E) & & 
}
\]
at arbitrary $L_E$.  (Here we've assumed $s^{-1}f=r^{-1}h$.)  Stability under equivalence in the right-hand factor is observed similarly.  The associator, unit structures, braiding, etc.\ are all inherited directly from $\opn{Ind}(\mcl{C}_0)$.
\end{proof}

\begin{remark}
For an alternate proof of Corollary \ref{cor:854}, one can simply employ the strict universal property from Proposition \ref{prop:frac_universal} and proceed as in the proof of Proposition \ref{prop:ind_product}.
\end{remark}

We now prove the main proposition.

\begin{proof}[Proof of Proposition \ref{prop:loc_tensor}]
Consider $M_A$ and $N_B$ in $\opn{Ind}(\mcl{C}_0)$.  We have the unit map $u:M_A\to M_{\opn{Can}}$ which is a $\mcl{C}$-relative equivalence, and we propose that the induced map
\[
M_A\ot N_B\to M_{\opn{Can}}\ot N_B
\]
is also a relative equivalence.  We consider the ``reduction" $M_A\to M_{\mfk{A}}$ defined by replacing the given atlas $\varinjlim_\alpha M_\alpha = M$ with the reduced atlas $\varinjlim_\alpha \bar{M}_\alpha = M$, where each $\bar{M}_\alpha=M_\alpha/K_\alpha$ for $K_\alpha$ the kernel of the structural map $M_\alpha\to M$.  The map $M_A\to M_{\mfk{A}}$ here is the apparent one, i.e.\ the one induced by the projections $M_\alpha\to \bar{M}_\alpha$ at each index $\alpha$.

The unit map for $M_A$ now factors
\[
M_A\to M_{\mfk{A}}\to M_{\opn{Can}}
\]
and, since $M_{\mfk{A}}$ is reduced, the map $M_{\mfk{A}}\to M_{\opn{Can}}$ is an isomorphism in $\opn{Ind}(\mcl{C}_0)$.  This follows by Proposition \ref{prop:348}.  Hence the map $M_A\ot N_B\to M_{\opn{Can}}\ot N_B$ is a relative equivalence if and only if the map $M_A\ot N_B\to M_{\mfk{A}}\ot N_B$ is a relative equivalence.
\par

For the latter claim, by cocontinuity of the product on $\opn{Ind}(\mcl{C}_0)$ in each factor, cocontinuity of the functor $\pi$, and the fact the $\pi|_{\mcl{C}_0}=id$, we have expressions
\[
\varinjlim_{\alpha,\beta}M_\alpha\ot N_\beta=\pi(M_A\ot N_B)\ \ \text{and}\ \ \varinjlim_{\alpha,\beta}\bar{M}_\alpha\ot N_\beta = \pi(M_{\mfk{A}}\ot N_B)
\]
in $\mcl{C}$.  We compare these two colimits.
\par

Consider the filtered indexing category $K$ for $M_A:K\star\{\infty\}\to \mcl{C}$, and at a fixed index $\alpha$, take the undercategory $K_\alpha=K_{\alpha/}$.  Then $K_\alpha$ is filtered and we have the colimit diagram
\[
\bar{M}_\alpha=\varinjlim_\gamma \bar{M}_{\alpha,\gamma}
\]
where we index over objects $\gamma:\alpha\to \alpha'$ in $K_\alpha$ and $\bar{M}_{\alpha,\gamma}=M_\alpha/\opn{ker}(f_\gamma)$ for $f_\gamma=M_A(\gamma):M_\alpha\to M_{\alpha'}$.  Thus each structure map
\[
M_\alpha\ot N_\beta \to \pi(M_A\ot N_B)
\]
at fixed $\alpha$ and $\beta$ factor through the colimit
\[
\varinjlim_\gamma (\bar{M}_{\alpha,\gamma}\ot N_\beta)=(\varinjlim_\gamma \bar{M}_{\alpha,\gamma})\ot N_\beta =\bar{M}_\alpha\ot N_\beta.
\]
Here the middle equality follows by our assumption (T) from Section \ref{sect:update}.  This gives now
\[
\varinjlim_{\alpha,\beta}M_\alpha\ot N_\beta=\varinjlim_{\alpha,\beta}\bar{M}_\alpha\ot N_\beta=\pi(M_A\ot N_B).
\]
From the diagrams
\[
\xymatrix{
M_\alpha\ot N_\beta\ar[r]\ar[dr] & \bar{M}_\alpha\ot N_\beta\ar[d]_=\ar[r] & \pi(M_A\ot N_B)\ar[d]\\
	& \bar{M}_\alpha\ot N_\beta\ar[r] & \pi(M_{\mfk{A}}\ot N_B)
}
\]
at each $\alpha$ and $\beta$ we conclude now that the map
\[
\pi(M_A\ot N_B)\to \pi(M_{\mfk{A}}\ot N_B)
\]
is in fact an isomorphism.  Hence $M_A\ot N_B\to M_{\mfk{A}}\ot N_B$ is a $\mcl{C}$-relative equivalence, as is the map $u\ot id:M_A\ot N_B\to M_{\opn{Can}}\ot N_B$, as desired.
\par

Similar arguments show that the map $id\ot u:M_A\ot N_B\to M_A\ot N_{\opn{Can}}$ is a $\mcl{C}$-relative equivalence at arbitrary $M_A$ and $N_B$.  Since relative equivalences are stable under composition we find that each map
\[
u_{M_A}\ot u_{N_B}:M_A\ot N_B\to M_{\opn{Can}}\ot N_{\opn{Can}}
\]
is a relative equivalence.  Now, for generic relative equivalences $s:M_A\to M'_{C}$ and $t:N_B\to N'_D$ we have a diagram
\[
\xymatrix{
M_A\ot N_B\ar[rrr]^{s\ot t}\ar[d]_{u\ot u} & & & M'_C\ot N'_D\ar[d]^{u\ot u}\\
M_{\opn{Can}}\ot N_{\opn{Can}}\ar[rrr]_{\opn{Can}(\pi s)\ot \opn{Can}(\pi t)} & & & M'_{\opn{Can}}\ot N'_{\opn{Can}}
}
\]
in which the bottom map and vertical maps are $\mcl{C}$-relative equivalences.  Indeed, the bottom map is an isomorphism.  It follows that the product $s\ot t$ is a relative equivalence, and we conclude that $W$ is in fact stable under the product on $\opn{Ind}(\mcl{C}_0)$.
\end{proof}

\subsection{Monoidality for the internal cocompletion \texorpdfstring{$\mcl{C}$}{C}}

\begin{theorem}\label{thm:abs_v1}
In the setting of Section \ref{sect:update}, there is a unique (braided) monoidal structure on $\mcl{C}$ so that the following hold:
\begin{enumerate}
\item[(a)] The inclusion $i:\mcl{C}_0\to \mcl{C}$ is (braided) monoidal with trivial tensor compatibility $i(M)\ot i(N)\overset{=}\to i(M\ot N)$.\vspace{1mm}
\item[(b)] The product functor $\ot:\mcl{C}\times \mcl{C}\to \mcl{C}$ is cocontinuous in each factor.
\end{enumerate}
\end{theorem}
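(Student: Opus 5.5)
The plan is to transport the (braided) monoidal structure from $\Frac_W$ to $\mcl{C}$ along the equivalence $\pi_W:\Frac_W\to\mcl{C}$ of diagram \eqref{eq:671}, and then check properties (a) and (b) and uniqueness. Concretely, I will define the product on $\mcl{C}$ by
\[
M\ot N:=\pi\big(M_{\opn{Can}}\ot N_{\opn{Can}}\big),
\]
using the product on $\opn{Ind}(\mcl{C}_0)$ from Proposition \ref{prop:ind_product}. On morphisms I will set $\xi\ot\zeta:=\pi(\opn{Can}(\xi)\ot\opn{Can}(\zeta))$. The associator, unitors and braiding are obtained by applying $\pi_W$ to the structure on $\Frac_W$ from Corollary \ref{cor:854}, and conjugating by the isomorphisms $\pi_W\opn{Can}_W\cong id$; here these are identities, since the counit is the identity. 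The coherence axioms then hold automatically, because this is transport of structure along an equivalence.

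For (a), note that for $M$ in $\mcl{C}_0$ the canonical atlas $M_{\opn{Can}}$ has a terminal object, namely the identity $M\to M$, since $M\in\mcl{C}_0$ is a subobject of itself. Hence $M_{\opn{Can}}\cong\opn{incl}(M)$ in $\opn{Ind}(\mcl{C}_0)$. The product of $\opn{incl}(M)$ and $\opn{incl}(N)$ in $\opn{Ind}(\mcl{C}_0)$ is strictly $\opn{incl}(M\ot N)$, and $\pi(\opn{incl}(M\ot N))=M\ot N$. To get a strictly trivial compatibility, I would redefine the transported product on the full subcategory $\mcl{C}_0\times\mcl{C}_0$ to equal the product of $\mcl{C}_0$, and conjugate by the canonical isomorphisms. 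This is legitimate precisely because the restriction functors along $\mcl{C}_0\to\mcl{C}$ are isofibrations (Corollary \ref{cor:inj_isofib} and Proposition \ref{prop:inj_isofib}). The unit, associator and braiding then restrict to those of $\mcl{C}_0$, since on $\opn{Ind}(\mcl{C}_0)$ they restrict strictly to those of $\mcl{C}_0$.

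For (b), take a colimit $M=\varinjlim M_i$ in $\mcl{C}$. Since $\opn{Can}$ is only a right adjoint, it does not obviously preserve colimits; this is the main obstacle. I will avoid it by using the relative equivalences. Choose atlases $A_i$ for the $M_i$ and form $\varinjlim (M_i)_{A_i}$ in $\opn{Ind}(\mcl{C}_0)$. This object maps under $\pi$ to $M$, because $\pi$ is cocontinuous as a left adjoint. By Proposition \ref{prop:loc_tensor} the unit maps tensored with $N_{\opn{Can}}$ lie in $W$, so for any atlas $M_A$ of $M$ we have $\pi(M_A\ot N_B)\cong M\ot N$ naturally. Hence
\[
M\ot N\cong\pi\Big(\big(\varinjlim (M_i)_{A_i}\big)\ot N_{\opn{Can}}\Big)\cong\varinjlim\pi\big((M_i)_{A_i}\ot N_{\opn{Can}}\big)\cong\varinjlim (M_i\ot N),
\]
by cocontinuity of $\ot$ on $\opn{Ind}(\mcl{C}_0)$ and of $\pi$. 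The remaining care is to check that these isomorphisms are compatible with the colimit structure maps, which follows from their naturality. The right factor is handled symmetrically.

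For uniqueness, let $\ot'$ be another structure satisfying (a) and (b). Then $\ot'\circ(\pi\times\pi)$ and $\pi\circ\ot$ are both bicocontinuous functors $\opn{Ind}(\mcl{C}_0)^2\to\mcl{C}$ that restrict to the same functor $i\circ\ot$ on $\mcl{C}_0^2$. By the lemma giving the equivalence $\Fun_{2\text{-}cc}\simeq\Fun_{2\text{-}fcc}$, they are uniquely isomorphic, compatibly with the identity on $\mcl{C}_0^2$. By Corollary \ref{cor:470}, which says that precomposition with $\pi$ is fully faithful, this isomorphism descends to an isomorphism $\ot'\cong\ot$ restricting to the identity on $\mcl{C}_0$. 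The same argument in the $3$-fold and $1$-fold cases shows that the associators, unitors and braidings correspond, since they agree on the generating subcategory $\mcl{C}_0$.
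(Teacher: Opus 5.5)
Your proposal is correct and follows essentially the paper's route. You transport the structure from $\Frac_W$ along $\pi_W$ and $\opn{Can}_W$, and use an isofibration to make the restriction to $\mcl{C}_0$ strict. The paper does this by first replacing $\opn{Can}$ with $\opn{Can}'$ satisfying $\opn{Can}'|_{\mcl{C}_0}=\opn{incl}$; you instead modify the product on $\mcl{C}_0\times\mcl{C}_0$ afterwards, which is equivalent. You then prove cocontinuity as the paper does, by lifting the colimit diagram along $\opn{Can}$ and combining Proposition \ref{prop:loc_tensor} with cocontinuity of $\pi$ and of the product on $\opn{Ind}(\mcl{C}_0)$. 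Your uniqueness argument spells out what the paper simply calls ``clear''.

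Two small corrections. First, the associator is transported using the unit isomorphisms $\zeta\colon id\cong\opn{Can}_W\pi_W$, given by $\opn{loc}(u)$. These are not identities; the identity counit $\pi_W\opn{Can}_W=id$ does not suffice. They are needed because $(M\ot N)\ot L=\pi\big(\opn{Can}(\pi(M_{\opn{Can}}\ot N_{\opn{Can}}))\ot L_{\opn{Can}}\big)$ must be compared with $\pi\big((M_{\opn{Can}}\ot N_{\opn{Can}})\ot L_{\opn{Can}}\big)$. Second, in (b) the atlases $A_i$ must be reduced (e.g.\ canonical), so that the morphisms of the diagram lift to $\opn{Ind}(\mcl{C}_0)$.
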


\begin{proof}
We cover the non-braided case.  To deal with the braiding one simply includes the words ``braiding" or ``braided" strategically throughout.  Uniqueness (up to unique isomorphism) is clear.  So we need only establish existence.
\par

We have the inverse
\[
\opn{Can}_W:\mcl{C}\to \Frac_W
\]
to the forgetful functor $\pi_W:\Frac_W\to \mcl{C}$.  For any object $M$ in $\mcl{C}_0$, the inclusion $\mcl{C}_0\to \Frac_W$ sends $M$ to $M$ with the trivial atlas $id_M:\Delta^1\to \mcl{C}_0\subseteq \mcl{C}$.  Since this atlas is reduced, we understand that the unit map $M\to M_{\opn{Can}}$ is a relative equivalence in $\opn{Ind}(\mcl{C}_0)$, and thus an isomorphism in $\Frac_W$.  Hence the unit transformation provides a natural isomorphism which completes a diagram
\[
\xymatrix{
	& \mcl{C}\ar[dr]^{\opn{Can}}\\
\mcl{C}_0\ar[ur]^{i}\ar[rr]_{\opn{incl}} & & \opn{Ind}(\mcl{C}_0). 
}
\]
Since the restriction functor
\[
i^\ast:\Fun(\mcl{C},\opn{Ind}(\mcl{C}_0))\to \Fun(\mcl{C}_0,\opn{Ind}(\mcl{C}_0))
\]
is an isofibration (Proposition \ref{prop:inj_isofib}), we can replace $\opn{Can}$ with an isomorphic functor $\opn{Can}':\mcl{C}\to\opn{Ind}(\mcl{C}_0)$ for which $\opn{Can}'|_{\mcl{C}_0}$ recovers the given inclusion $\mcl{C}_0\to \opn{Ind}(\mcl{C}_0)$.
\par

Take $\opn{Can}'_W=\opn{loc}\circ \opn{Can}':\mcl{C}\to \Frac_W$.  We note that $\opn{Can}'_W$ is isomorphic to $\opn{Can}_W$, and thus is also an inverse to $\pi_W$. There exists in this case a unique natural isomorphism
\[
\zeta:id_{\Frac_W}\overset{\sim}\to\opn{Can}'_W\pi_W
\]
which restrict to the identity on $\mcl{C}_0$.  For this final point one can use the fact that the restriction functor
\[
\Fun(\Frac_W,\Frac_W)\to \Fun(\mcl{C}_0,\Frac_W)
\]
is an isofibration and an equivalence onto the full subcategory of functors whose Ind-extensions send $\mcl{C}$-relative equivalences to isomorphisms (Proposition \ref{prop:frac_universal}).
\par

We now transfer the monoidal structure from $\Frac_W$ to $\mcl{C}$ via the mutually inverse equivalences $\opn{Can}'_W$ and $\pi_W$,
\[
\ot:\mcl{C}\times\mcl{C}\to \mcl{C},\ \ (M,N)\mapsto \pi_W(\opn{Can}'_W(M)\ot\opn{Can}'_W(N))=:M\ot N.
\]
The transformation $\zeta$ provides natural isomorphisms
\[
\begin{array}{l}
\zeta_{M,N}:\vspace{1mm}\opn{Can}'_W(M)\ot\opn{Can}'_W(N)\\\hspace{8em}\overset{\sim}\to
\opn{Can}'_W\pi_W(\opn{Can}'_W(M)\ot\opn{Can}'_W(N))=\opn{Can}'_W(M\ot N).
\end{array}
\]
Via the $\zeta_{M,N}$ and full faithfulness of $\opn{Can}'_W$, the associator and unit structures on $\Frac_W$ induce a unique associator and unit structure for the product $\ot$ on $\mcl{C}$ under which $(\opn{Can}'_W,\zeta_{-,-})$ is a monoidal functor.
\par

Recall that we have chosen $\opn{Can}'_W$ to be the tautological inclusion on $\mcl{C}_0$, and $\zeta$ to be the identity on $\mcl{C}_0$, so that for products of objects in $\mcl{C}_0$ we have
\[
(\text{product in }\mcl{C})\ M\ot N=\pi_W(\opn{Can}'_W(M)\ot \opn{Can}'_W(N))=\pi_W(\opn{incl}(M)\ot\opn{incl}(N))
\]
\[
=\pi_W(\opn{incl}(M\ot N))=M\ot N\ (\text{product in }\mcl{C}_0).
\]
Furthermore, the associators etc.\ for $\mcl{C}$ restrict to the pre-existing associators etc.\ on $\mcl{C}_0$.  Hence the inclusion
\[
i:\mcl{C}_0\to \mcl{C}
\]
is monoidal with trivial tensor compatibility.
\par

Finally, for cocontinuity, we note that the inverse $\pi_W:\Frac_W\to \mcl{C}$ inherits a monoidal structure from the monoidal structure on $\opn{Can}'_W$. Under this induced monoidal structure the tensor compatibility $\pi_W(M_A)\ot\pi_W(N_B)\to \pi_W(M_A\ot N_B)$ restricts to the identity on $\mcl{C}_0$.  Via the diagram
\[
\xymatrix{
 & \Frac_W\ar[dr]^{\pi_W} \\
\opn{Ind}(\mcl{C}_0)\ar[ur]^{\opn{loc}}\ar[rr]_\pi & & \mcl{C}
}
\]
and monoidality of $\opn{loc}$ we see that the forgetful functor
\[
\pi:\opn{Ind}(\mcl{C}_0)\to \mcl{C}.
\]
is canonically monoidal, and also cocontinuous, with $\pi|_{\mcl{C}_0}=id_{\mcl{C}_0}$ as a monoidal functor.
\par

To see that the tensor structure on $\mcl{C}$ is cocontinuous in each factor, for any colimit diagram $\Lambda^M:K\star\{\infty\}\to \mcl{C}$ with cone point $M$ the underlying diagram $\Lambda:K\to \mcl{C}$ lifts to a diagram $\opn{Can}\circ \Lambda:K\to \opn{Ind}(\mcl{C}_0)$ and we take the colimit $M_A$ in $\opn{Ind}(\mcl{C}_0)$.  We may assume the underlying object for this colimit is in fact $M$, by cocontinuity of $\pi$.  Then we have
\[
\opn{colim}_\alpha M_\alpha =M\ \ \text{and}\ \ \opn{colim}_\alpha \opn{Can}(M_\alpha)=M_A
\]
in $\mcl{C}$ and $\opn{Ind}(\mcl{C}_0)$ respectively, and for $N_B$ in $\opn{Ind}(\mcl{C}_0)$ the maps
\[
\opn{colim}_\alpha \opn{Can}(M_\alpha)\ot N_B\to M_A\ot N_B\ \ \text{and}\ \ \opn{colim}_\alpha N_B\ot \opn{Can}(M_\alpha)\to N_B\ot M_A
\]
are isomorphisms.  This follows by cocontinuity of the product on $\opn{Ind}(\mcl{C}_0)$ in each factor.  Applying our cocontinuous map $\pi$, and noting the tensor compatibility isomorphism for $\pi$, we find now that the maps
\[
\opn{colim}_\alpha M_\alpha\ot N=\opn{colim}_\alpha \pi\opn{Can}(M_\alpha)\ot \pi(N_B)\to \pi(M_A)\ot \pi(N_B)=M\ot N
\]
and
\[
\opn{colim}_\alpha N\ot M_\alpha=\opn{colim}_\alpha \pi(N_B)\ot \pi\opn{Can}(M_\alpha)\to \pi(N_B)\ot \pi(M_A)=N\ot M
\]
are also isomorphisms.
\end{proof}

As we saw in the proof, for the induced monoidal structure on $\mcl{C}$ there exists a unique monoidal structure on the forgetful map $\pi:\opn{Ind}(\mcl{C}_0)\to \mcl{C}$ so that $\pi|_{\mcl{C}_0}=id_{\mcl{C}_0}$ as a monoidal functor.  The existence of such a monoidal functor implies cocontinuity of the product on $\mcl{C}$.  We record this alternate formulation of the result.

\begin{theorem}\label{thm:abd_v2}
In the setting of Section \ref{sect:update}, there is a unique (braided) monoidal structure on $\mcl{C}$ so that the following hold:
\begin{enumerate}
\item[(a)] The inclusion $i:\mcl{C}_0\to \mcl{C}$ is (braided) monoidal with trivial tensor compatibility $i(M)\ot i(N)\overset{=}\to i(M\ot N)$.\vspace{1mm}
\item[(b)] The forgetful map $\pi:\opn{Ind}(\mcl{C}_0)\to \mcl{C}$ admits a (braided) monoidal structure for which $\pi|_{\mcl{C}_0}=id_{\mcl{C}_0}$ as a (braided) monoidal functor.
\end{enumerate}
\end{theorem}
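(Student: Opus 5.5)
Existence comes directly from the proof of Theorem \ref{thm:abs_v1}. There we built a product on $\mcl{C}$ by transporting the product on $\Frac_W$ along $\pi_W$ and $\opn{Can}'_W$. We also observed that $\pi_W$ inherits a monoidal structure whose tensor compatibility restricts to the identity on $\mcl{C}_0$. Composing with the strictly monoidal functor $\opn{loc}$ from Corollary \ref{cor:854} makes $\pi=\pi_W\circ\opn{loc}$ monoidal with $\pi|_{\mcl{C}_0}=id_{\mcl{C}_0}$ as a monoidal functor. In the braided case the braiding on $\Frac_W$ is inherited from $\opn{Ind}(\mcl{C}_0)$, so the same transport yields a braided structure. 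Thus (a) and (b) hold for the structure of Theorem \ref{thm:abs_v1}.

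For uniqueness, suppose $(\mcl{C},\ot',\ldots)$ is another structure satisfying (a) and (b), with monoidal structure $J'$ on $\pi$. I would show $\ot'$ is cocontinuous in each factor, and then conclude by the uniqueness clause of Theorem \ref{thm:abs_v1}. Let $M=\opn{colim}_\alpha M_\alpha$ be a colimit in $\mcl{C}$ and $N$ an object of $\mcl{C}$. Lift the diagram to $\opn{Ind}(\mcl{C}_0)$ via $\opn{Can}$ and take its colimit $M_A$; by cocontinuity of $\pi$ we may take $\pi(M_A)=M$. By Proposition \ref{prop:ind_product}(b) we have $\opn{colim}_\alpha(\opn{Can}(M_\alpha)\ot N_{\opn{Can}})\cong M_A\ot N_{\opn{Can}}$. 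Applying the cocontinuous functor $\pi$ and the natural isomorphisms $J'$ identifies $\opn{colim}_\alpha(M_\alpha\ot' N)\to M\ot' N$ with an isomorphism. The right factor is handled the same way. So any structure satisfying (a) and (b) also satisfies the hypotheses of Theorem \ref{thm:abs_v1}.

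The step needing most care is the uniqueness in Theorem \ref{thm:abs_v1}, which the paper asserts only as ``clear''; I would spell it out. Two bicocontinuous products on $\mcl{C}$ that agree strictly on $\mcl{C}_0$ induce, via $\pi^\ast$, bicocontinuous functors $\opn{Ind}(\mcl{C}_0)^2\to\mcl{C}$. Both restrict to $i\circ\ot$ on $\mcl{C}_0\times\mcl{C}_0$. By the $n$-fold version of Theorem \ref{thm:ind_univ} (the lemma in Section \ref{sec:Ind-comp-monoidal}) they are uniquely isomorphic by an isomorphism that is the identity on $\mcl{C}_0$. By Corollary \ref{cor:470} this isomorphism descends to $\mcl{C}$. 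The associators, units and braidings are then forced, because they are natural transformations between cocontinuous functors determined on the dense subcategory $\mcl{C}_0$. The main subtlety is checking that the comparison isomorphism is compatible with the chosen $J'$ rather than merely existing. I would handle this by noting that both comparisons restrict to identities on $\mcl{C}_0$, and that the restriction functors involved are faithful on transformations between colimit-preserving functors.
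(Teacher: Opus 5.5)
Your proposal is correct and takes essentially the paper's route. Existence is read off from the proof of Theorem~\ref{thm:abs_v1}, where $\pi=\pi_W\circ\opn{loc}$ acquires its monoidal structure; for uniqueness, a monoidal structure on $\pi$ as in (b) forces cocontinuity of the product, so the uniqueness clause of Theorem~\ref{thm:abs_v1} applies, and your spelled-out argument for that clause (the $n$-fold universal property of $\opn{Ind}(\mcl{C}_0)$ plus descent along $\pi$) correctly fills in what the paper calls ``clear.''
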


\section{Application to vertex operator algebras}\label{sec:vertex-alg}

Here, we show that Theorem \ref{thm:abs_v1} applies when $\mcl{C}_0$ is a suitable category of modules for a vertex operator algebra, strengthening the results proved in \cite{CMY-completions}. Thus let $V$ be a vertex operator algebra with Virasoro $L_0$-eigenvalue grading $V=\bigoplus_{n\in\mathbb{Z}} V_{(n)}$. 

\subsection{Generalized \texorpdfstring{$V$}{V}-modules}

We take the abelian category $\mcl{B}$ of Section \ref{subsec:abs-set} to be the category of generalized $V$-modules (see \cite[Definitions 2.9 and 2.12]{HLZ1} for a full definition). In particular, any generalized $V$-module $M$ decomposes into generalized eigenspaces for the Virasoro $L_0$-operator, $M=\bigoplus_{h\in\mathbb{C}} M_{[h]}$, with no restrictions on $\dim M_{[h]}$. The action of $V$ on $M$ is given by the vertex operator map
\begin{align*}
Y_M: V & \longrightarrow (\mathrm{End}\,M)[[x,x^{-1}]]\\
v & \longmapsto Y_M(v,x) =\sum_{n\in\mathbb{Z}} v_n\,x^{-n-1}.
\end{align*}

The category $\mcl{B}$ of generalized $V$-modules admits small coproducts and quotients and thus is cocomplete. In particular, the colimit of a diagram $M_\ast: K\rightarrow\mcl{B}$ for $K$ an essentially small category is given by the usual set-theoretic construction. To be concrete, first observe that without loss of generality, we may assume $K$ is strictly small, since otherwise there is an equivalence $F: K'\rightarrow K$ from a small category $K'$, and the colimit of $M_\ast\circ F$ can be given the structure of a colimit of $M_\ast$. So it is sufficient to construct the colimit of $M_\ast: K\rightarrow\mcl{B}$ for small $K$ as follows.
Set
\begin{equation*}
\mathrm{colim}_\alpha M_\alpha =\bigoplus_{\alpha\in \mathrm{obj}(K)} M_\alpha\bigg/ J,
\end{equation*}
where the $V$-submodule $J$ is linearly spanned by elements of the form
\begin{equation*}
q_\alpha(m_\alpha) -q_{\beta}(M_f(m_\alpha))
\end{equation*}
for $\alpha,\beta\in \mathrm{obj}(K)$, $m_\alpha\in M_\alpha$, and $f\in\mathrm{Hom}_K(\alpha,\beta)$; here $q_\alpha$ is the inclusion of $M_\alpha$ into the direct sum. Then $\mathrm{colim}_\alpha M_\alpha$ is indeed a cocone of $M_\ast$ with structure maps
\begin{align*}
\varphi_\alpha: M_\alpha & \longrightarrow\mathrm{colim}_\alpha M_\alpha\\
m_\alpha &\longmapsto q_\alpha(m_\alpha)+J.
\end{align*}
Moreover, $\mathrm{colim}_\alpha M_\alpha$ is indeed a colimit of $M_\ast$ because if $N$ is any cocone of $M_\ast$, with structure morphisms $\psi_\alpha: M_\alpha\rightarrow N$ for $\alpha\in\mathrm{obj}(K)$, then there is a unique map $\Phi: \opn{colim}_\alpha M_\alpha\rightarrow N$ making the diagram
\begin{equation*}
\xymatrixcolsep{4pc}
\xymatrix{
M_\alpha \ar[rd]^{\psi_\alpha} \ar[d]_{\varphi_\alpha} & \\
\mathrm{colim}_\alpha M_\alpha \ar[r]^(.58){\Phi} & N\\
}
\end{equation*}
commute for all objects $\alpha\in\mathrm{obj}(K)$; this map is uniquely characterized by
\begin{equation*}
\Phi(\varphi_\alpha(m_\alpha)) = \psi_{\alpha}(m_\alpha)
\end{equation*}
for all $\alpha\in \mathrm{obj}(K)$, $m_\alpha\in M_\alpha$.

\subsection{Tensor products}

We now recall the definition of tensor products of generalized $V$-modules relative to full subcategories of $\mcl{B}$. The tensor product operation is defined using intertwining operators, which we define following \cite[Definition 3.10]{HLZ2}:

\begin{definition}\label{def:intw-op}
Given generalized $V$-modules $M_1$, $M_2$, and $M_3$, an \textit{intertwining operator} of type $\binom{M_3}{M_1\,M_2}$ is a linear map
\begin{align*}
\mathcal{Y}: M_1 & \longrightarrow \opn{Hom}_\mathbb{C}(M_2,M_3)[\log x]\lbrace x\rbrace\\
m_1 & \longmapsto \mcl{Y}(m_1,x) =\sum_{h\in\mathbb{C}}\sum_{k\in\mathbb{Z}_{\geq 0}} (m_1)_{h;k}^{\mcl{Y}}\,x^{-h-1}(\log x)^k
\end{align*}
satisfying the following properties:
\begin{enumerate}

\item \textit{Lower truncation}: For all $m_1\in M_1$, $m_2\in M_2$ and $h\in\mathbb{C}$, $(m_1)_{h+n;k}^{\mcl{Y}}m_2 =0$ for $n\in\mathbb{Z}_{\geq 0}$ sufficiently large, independently of $k$. 

\item The \textit{Jacobi identity}: For all $v\in V$ and $m_1\in M_1$,
\begin{align*}
x_0^{-1}\delta\left(\frac{x_1-x_2}{x_0}\right) Y_{M_3}(v,x_1) & \mcl{Y}(m_1,x_2)  -x_0^{-1}\delta\left(\frac{x_2-x_1}{-x_0}\right) \mcl{Y}(m_1,x_2)Y_{M_2}(v,x_1)\\
& =x_2^{-1}\delta\left(\frac{x_1-x_0}{x_2}\right)\mcl{Y}(Y_{M_1}(v,x_0)m_1,x_2).
\end{align*}
Here $\delta(x)=\sum_{n\in\mathbb{Z}} x^n$ is the formal Dirac delta function.

\item The \textit{$L_{-1}$-derivative property}: For $m_1\in M_1$, $\mcl{Y}(L_{-1}m_1,x)=\frac{d}{dx}\mcl{Y}(m_1,x)$, where $L_{-1}$ is the Virasoro $L_{-1}$-operator acting on $M_1$.

\end{enumerate}
\end{definition}

We can now define tensor products in full subcategories of $\mcl{B}$. Since we also need to work with vector space tensor products of generalized $V$-modules, we now use the distinct notations $\otimes_{\mathbb{C}}$ for vector space tensor products and $\boxtimes$ for the $V$-module tensor product operation on a subcategory of $\mcl{B}$.
\begin{definition}\label{def:tens-prod}
Let $\mcl{C}_0$ be a full subcategory of the category $\mcl{B}$ of generalized $V$-modules, and let $M_1, M_2\in\opn{obj}(\mcl{C}_0)$. A \textit{tensor product} of $M_1$ and $M_2$ relative to $\mcl{C}_0$ is a pair $(M_1\boxtimes M_2,\mcl{Y}_{M_1,M_2})$, where $M_1\boxtimes M_2\in\opn{obj}(\mcl{C}_0)$ and $\mcl{Y}_{M_1,M_2}$ is an intertwining operator of type $\binom{M_1\boxtimes M_2}{M_1\,M_2}$, satisfying the following universal property: For any $M_3\in\opn{obj}(\mcl{C}_0)$ and intertwining operator $\mcl{Y}$ of type $\binom{M_3}{M_1\,M_2}$, there is a unique $V$-module map $f: M_1\boxtimes M_2\rightarrow M_3$ such that the diagram
\begin{equation*}
\xymatrix{
M_1\otimes_{\mathbb{C}} M_2 \ar[d]_{\mcl{Y}_{M_1,M_2}} \ar[r]^(.48){\mcl{Y}} & M_3[\log x]\lbrace x\rbrace\\
(M_1\boxtimes M_2)[\log x]\lbrace x\rbrace \ar[ru]_{f} & \\
}
\end{equation*}
commutes. Here the diagonal arrow denotes the obvious extension of $f$ to spaces of formal series with coefficients in $M_1\boxtimes M_2$ and $M_3$.
\end{definition}

If $\mcl{C}_0$ is a full subcategory of $\mcl{B}$ that is closed under tensor products (relative to $\mcl{C}_0$), then tensor products define a functor $\boxtimes: \mcl{C}_0\times\mcl{C}_0\rightarrow\mcl{C}_0$. 
 Indeed, given morphisms $f_1: M_1\rightarrow N_1$ and $f_2: M_2\rightarrow N_2$ in $\mcl{C}_0$, their tensor product is the unique map $f_1\boxtimes f_2: M_1\boxtimes M_2\rightarrow N_1\boxtimes N_2$ (guaranteed by the universal property of the tensor product $M_1\boxtimes M_2$) such that the diagram
\begin{equation*}
\xymatrix{
M_1\otimes_{\mathbb{C}} M_2 \ar[d]^{\mcl{Y}_{M_1,M_2}} \ar[r]^{f_1\otimes_{\mathbb{C}} f_2} & N_1\otimes_{\mathbb{C}} N_2 \ar[d]^{\mcl{Y}_{N_1,N_2}} \\
(M_1\boxtimes M_2)[\log x]\lbrace x\rbrace \ar[r]^{f_1\boxtimes f_2} & (N_1\boxtimes N_2)[\log x]\lbrace x\rbrace\\
}
\end{equation*}
commutes. Note that $\mcl{Y}_{N_1,N_2}\circ(f_1\otimes_{\mathbb{C}} f_2)$ is an intertwining operator of type $\binom{N_1\boxtimes N_2}{M_1\,M_2}$.

\subsection{Cocontinuity of the tensor product and finite generation}

If $\mcl{C}_0$ is a full subcategory of $\mcl{B}$ that is closed under tensor products relative to $\mcl{C}_0$, then we can now show that under mild conditions, tensor products in $\mcl{C}_0$ commute with all colimits that exist in $\mcl{C}_0$. Thus suppose $K$ is an essentially small category and $M_\ast: K\rightarrow\mcl{C}_0$ is a diagram such that $\opn{colim}_\alpha M_\alpha$, which at first is an object of the category $\mcl{B}$ of generalized $V$-modules, actually lies in $\mcl{C}_0$. This occurs, for example, if $\mcl{C}_0$ is finitely cocomplete and $K$ is finite. Then for any $N\in\opn{obj}(\mcl{C}_0)$, we also have
\begin{equation*}
M_\ast\boxtimes N: K\longrightarrow\mcl{C}_0
\end{equation*}
defined on objects by $\alpha \mapsto M_\alpha\boxtimes N$ and on morphisms by $f \mapsto M_f\boxtimes id_N$. We also have $\opn{colim}_\alpha (M_\alpha\boxtimes N)$, an object of $\mcl{B}$. For $\alpha\in \opn{obj}(K)$, let 
\begin{align*}
\varphi_\alpha: M_\alpha\boxtimes N & \longrightarrow\opn{colim}_\alpha(M_\alpha\boxtimes N)\\
\psi_\alpha: M_\alpha & \longrightarrow \opn{colim}_\alpha M_\alpha
\end{align*}
be the structure maps of the respective colimits. Since the maps $\psi_\alpha$ are morphisms in $\mcl{C}_0$ by assumption, $\psi_\alpha\boxtimes id_N: M_\alpha\boxtimes N\rightarrow (\opn{colim}_\alpha M_\alpha)\boxtimes N$ is defined for each $\alpha$, making $(\opn{colim}_\alpha M_\alpha)\boxtimes N$ a cocone of $M_\ast\boxtimes N$. Thus there is a unique map
\begin{equation*}
\Phi: \opn{colim}_\alpha(M_\alpha\boxtimes N) \longrightarrow (\opn{colim}_\alpha M_\alpha)\boxtimes N
\end{equation*}
such that the diagrams
\begin{equation*}
\xymatrix{
M_\alpha\boxtimes N \ar[d]_{\varphi_\alpha} \ar[rd]^{\psi_\alpha\boxtimes id_N} & \\
\opn{colim}_\alpha(M_\alpha\boxtimes N) \ar[r]_\Phi & (\opn{colim}_\alpha M_\alpha)\boxtimes N\\
}
\end{equation*}
commute for all $\alpha\in\opn{obj}(K)$.

\begin{proposition}\label{prop:VOA-tens-prods-co-cont}
Let $\mcl{C}_0$ be a full subcategory of the category $\mcl{B}$ of generalized $V$-modules that is closed under tensor products relative to $\mcl{C}_0$, and assume that for any intertwining operator $\mcl{Y}$ of type $\binom{M_3}{M_1\,M_2}$ where $M_1$, $M_2\in\opn{obj}(\mcl{C}_0)$ and $M_3\in\opn{obj}(\mcl{B})$, the image of $\mcl{Y}$ is contained in a $\mcl{C}_0$-submodule of $M_3$. Then
the map $\Phi: \opn{colim}_\alpha(M_\alpha\boxtimes N) \rightarrow (\opn{colim}_\alpha M_\alpha)\boxtimes N$  defined above is an isomorphism for any $N\in\opn{obj}(\mcl{C}_0)$ and any diagram $M_\ast:K\rightarrow\mcl{C}_0$ such that $\opn{colim}_\alpha M_\alpha\in\opn{obj}(\mcl{C}_0)$. In particular, $\opn{colim}_\alpha(M_\alpha\boxtimes N)\in\opn{obj}(\mcl{C}_0)$.
\end{proposition}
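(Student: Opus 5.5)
The plan is to construct an inverse to $\Phi$ using the universal property of the tensor product $M\boxtimes N$, where $M:=\opn{colim}_\alpha M_\alpha\in\opn{obj}(\mcl{C}_0)$. Set $C:=\opn{colim}_\alpha(M_\alpha\boxtimes N)\in\opn{obj}(\mcl{B})$. First I will build an intertwining operator $\mcl{Y}$ of type $\binom{C}{M\,N}$. Every element of $M$ has the form $\psi_\alpha(m_\alpha)$, since the colimit is a quotient of $\bigoplus_\alpha M_\alpha$. For such an element I define
\[
\mcl{Y}(\psi_\alpha(m_\alpha),x)n=\varphi_\alpha\big(\mcl{Y}_{M_\alpha,N}(m_\alpha,x)n\big).
\]
I will extend this additively to finite sums $\sum_\alpha\psi_\alpha(m_\alpha)$.

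The first step is well-definedness, which I expect to be the main bookkeeping point. I will define $\tilde{\mcl{Y}}$ on $\bigoplus_\alpha M_\alpha$ by $q_\alpha(m_\alpha)\mapsto\varphi_\alpha\circ\mcl{Y}_{M_\alpha,N}(m_\alpha,x)$. I then need $\tilde{\mcl{Y}}$ to vanish on $J$. For $f:\alpha\to\beta$, naturality of the tensor product gives $\mcl{Y}_{M_\beta,N}(M_f m_\alpha,x)=(M_f\boxtimes id_N)\mcl{Y}_{M_\alpha,N}(m_\alpha,x)$. Since $\varphi_\beta\circ(M_f\boxtimes id_N)=\varphi_\alpha$, the generators of $J$ are killed. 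The submodule generated by these spanning elements is still killed, because $J$ is already the span of such elements and they are closed under the $V$-action. Lower truncation, the Jacobi identity and the $L_{-1}$-derivative property then hold for $\mcl{Y}$, since they hold for each $\mcl{Y}_{M_\alpha,N}$, the maps $\varphi_\alpha$ and $\psi_\alpha$ are $V$-module maps, and every element of $M$ is a finite sum of images.

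The second step handles the fact that $C$ need not lie in $\mcl{C}_0$ a priori, so the universal property cannot be applied directly. This is where the hypothesis enters: the image of $\mcl{Y}$ is contained in a $\mcl{C}_0$-submodule $C'\subseteq C$. I then regard $\mcl{Y}$ as an intertwining operator of type $\binom{C'}{M\,N}$. The universal property yields a unique $g:M\boxtimes N\to C'\hookrightarrow C$ with $g\circ\mcl{Y}_{M,N}=\mcl{Y}$.

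The third step checks that $g$ and $\Phi$ are mutually inverse.
\begin{itemize}
\item For $\Phi\circ g$: by construction $\Phi\circ\mcl{Y}=\mcl{Y}_{M,N}\circ(\text{id})$, because $\Phi\varphi_\alpha\mcl{Y}_{M_\alpha,N}(m_\alpha,x)=(\psi_\alpha\boxtimes id_N)\mcl{Y}_{M_\alpha,N}(m_\alpha,x)=\mcl{Y}_{M,N}(\psi_\alpha m_\alpha,x)$. Hence $\Phi g\mcl{Y}_{M,N}=\mcl{Y}_{M,N}$, and uniqueness in the universal property, with target $M\boxtimes N\in\mcl{C}_0$, forces $\Phi g=id$.
\item For $g\circ\Phi$: I compose with $\varphi_\alpha$ and get $g\circ(\psi_\alpha\boxtimes id_N)$. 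Evaluated on $\mcl{Y}_{M_\alpha,N}$, this gives $g\mcl{Y}_{M,N}(\psi_\alpha\cdot,x)=\varphi_\alpha\mcl{Y}_{M_\alpha,N}$. Uniqueness for $M_\alpha\boxtimes N$, with target $C'$ or a $\mcl{C}_0$-submodule containing both images, gives $g\Phi\varphi_\alpha=\varphi_\alpha$. By the colimit property, $g\Phi=id$.
\end{itemize}

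There is a subtlety in the second bullet: uniqueness must be applied with a target in $\mcl{C}_0$. I would resolve it by noting that $\varphi_\alpha(M_\alpha\boxtimes N)$ and the relevant images lie in $C'$, which can be taken to contain them. Alternatively, the $\mcl{C}_0$-submodule hypothesis can be applied to $\varphi_\alpha\mcl{Y}_{M_\alpha,N}$ directly. Because $\Phi g=id$, the map $g$ is injective, and its image $g(M\boxtimes N)$ contains all the $\varphi_\alpha$-images, which generate $C$. So $g$ is surjective, hence an isomorphism. This also sidesteps the subtlety entirely, and it gives $C\cong M\boxtimes N\in\opn{obj}(\mcl{C}_0)$.
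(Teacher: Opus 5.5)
\textbf{There is a genuine gap, in the direction $g\circ\Phi=id$ (equivalently, surjectivity of $g$).} The rest matches the paper: constructing $\mcl{Y}$ on $\bigoplus_\alpha M_\alpha$ and checking it kills $J$, and using the hypothesis to factor $\mcl{Y}$ through a $\mcl{C}_0$-submodule $C'$. Your proof that $\Phi\circ g=id$ is also correct. You use uniqueness in the universal property with target $M\boxtimes N\in\opn{obj}(\mcl{C}_0)$, where the paper uses a spanning argument.

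All three of your ways of finishing silently rely on one unstated fact: that $M_\alpha\boxtimes N$ is spanned by the coefficients of $\mcl{Y}_{M_\alpha,N}(m_\alpha,x)n$. What you have actually shown is only that $g(M\boxtimes N)$ and $C'$ contain the coefficients of $\varphi_\alpha\circ\mcl{Y}_{M_\alpha,N}$. That is weaker, and it breaks each route:
\begin{itemize}
\item It does not give $\varphi_\alpha(M_\alpha\boxtimes N)\subseteq C'$. So you cannot invoke uniqueness for $M_\alpha\boxtimes N$ with target $C'$, because $\varphi_\alpha$ need not land in $C'$.
\item Applying the hypothesis to $\varphi_\alpha\circ\mcl{Y}_{M_\alpha,N}$ has the same defect. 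In addition, $g\Phi\varphi_\alpha$ need not land in the new submodule.
\item Your claim that $g(M\boxtimes N)$ ``contains all the $\varphi_\alpha$-images'' is exactly the unproved spanning statement. So the final route does not sidestep the subtlety; it hides it.
\end{itemize}

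\textbf{The fix.} The missing ingredient is \cite[Proposition 4.23]{HLZ3}, which the paper invokes for both directions. You can also derive it from your own hypothesis, as follows.
\begin{enumerate}
\item Let $S\subseteq M_1\boxtimes M_2$ be the span of the coefficients of $\mcl{Y}_{M_1,M_2}(m_1,x)m_2$. It is a $V$-submodule by the commutator formula.
\item Apply the hypothesis to $\mcl{Y}_{M_1,M_2}$, regarded as an intertwining operator of type $\binom{S}{M_1\,M_2}$. The $\mcl{C}_0$-submodule it provides must contain all of $S$, so $S\in\opn{obj}(\mcl{C}_0)$.
\item The universal property gives $f:M_1\boxtimes M_2\to S$ with $f\circ\mcl{Y}_{M_1,M_2}=\mcl{Y}_{M_1,M_2}$. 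Writing $\iota:S\hookrightarrow M_1\boxtimes M_2$, uniqueness gives $\iota\circ f=id$, so $S=M_1\boxtimes M_2$.
\end{enumerate}
With spanning in hand, $g\Phi\varphi_\alpha$ and $\varphi_\alpha$ agree on a spanning set of $M_\alpha\boxtimes N$. Hence $g\Phi\varphi_\alpha=\varphi_\alpha$ for all $\alpha$, so $g\Phi=id$ by the colimit property. Equivalently, $g$ is surjective, and your argument is complete.
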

\begin{proof}
We use the universal property of tensor products in $\mcl{C}_0$ to construct an inverse of $\Phi$. As discussed previously, we may assume without loss of generality that $K$ is small, so that we can realize $\opn{colim}_\alpha M_\alpha$ as
\begin{equation*}
\opn{colim}_\alpha M_\alpha =\bigoplus_{\alpha\in \opn{obj}(K)} M_\alpha\bigg/ J,
\end{equation*}
where $J$ is the $V$-submodule linearly spanned by
\begin{equation*}
q_\alpha(m_\alpha)-q_\beta(M_f(m_\alpha))
\end{equation*}
for $\alpha,\beta\in \opn{obj}(K)$, $m_\alpha\in M_\alpha$, and $f\in\mathrm{Hom}_K(\alpha,\beta)$. As before, $q_\alpha$ is the inclusion of $M_\alpha$ into the direct sum. Also as before, the structure map $\psi_\alpha: M_\alpha\rightarrow\opn{colim}_\alpha M_\alpha$ is given by $\psi_\alpha(m_\alpha)=q_\alpha(m_\alpha)+J$ for $m_\alpha$.
Note that $\opn{colimit}_\alpha M_\alpha$ is spanned by the elements $\psi_\alpha(m_\alpha)$ for $\alpha\in A$, $m_\alpha\in M_\alpha$. We use $\varphi_\alpha$ to denote the structure map $M_\alpha\boxtimes N\rightarrow\opn{colim}_\alpha(M_\alpha\boxtimes N)$ for $\alpha\in\opn{obj}(K)$.

Now we construct an intertwining operator $\mcl{Y}$ of type $\binom{\opn{colim}_\alpha(M_\alpha\boxtimes N)}{\opn{colim}_\alpha M_\alpha\,\, N}$ such that
\begin{align*}
\mcl{Y}\circ(\psi_\alpha\otimes_{\mathbb{C}} id_N) = \varphi_\alpha\circ\mcl{Y}_{M_\alpha,N}: M_\alpha\otimes_{\mathbb{C}} N\longrightarrow\opn{colim}_\alpha(M_\alpha\boxtimes N)[\log x]\lbrace x\rbrace
\end{align*} 
for all $\alpha\in \opn{obj}(K)$. To do so, first note that because coproducts distribute over tensor products in the category of vector spaces, there is a unique linear map
\begin{equation*}
\mcl{X}: \bigg(\bigoplus_{\alpha\in A} M_\alpha\bigg)\otimes_{\mathbb{C}} N \longrightarrow \opn{colim}_\alpha(M_\alpha\boxtimes N)[\log x]\lbrace x\rbrace
\end{equation*}
such that
\begin{equation*}
\mcl{X}\circ(q_\alpha\otimes_{\mathbb{C}} id_N) =\varphi_\alpha\circ\mcl{Y}_{M_\alpha,N}
\end{equation*}
for all $\alpha\in \opn{obj}(K)$. Then because the tensor product of vector spaces is exact, $\mcl{X}$ descends to the desired linear map
\begin{equation*}
\mcl{Y}: (\opn{colim}_\alpha M_\alpha)\otimes_{\mathbb{C}} N\longrightarrow\opn{colim}_\alpha(M_\alpha\boxtimes N)[\log x]\lbrace x\rbrace
\end{equation*}
provided $\mcl{X}\vert_{J\otimes_{\mathbb{C}} N}=0$. Thus we need to show that $\mcl{X}$ annihilates a typical spanning element $(q_\alpha(m_\alpha)-q_\beta(M_\ast(f)(m_\alpha)))\otimes_{\mathbb{C}} n$ of $J\otimes_{\mathbb{C}} N$. Indeed,
\begin{align*}
\mcl{X}(q_\alpha(m_\alpha) & -q_\beta(M_\ast(f)(m_\alpha)),x)n\\ 
& =\varphi_{\alpha}\circ\mcl{Y}_{M_\alpha,N}(m_\alpha,x)n -\varphi_{\beta}\circ\mcl{Y}_{M_\beta,N}(M_\ast(f)(m_\alpha),x)n\\
& =\varphi_{\alpha}\circ\mcl{Y}_{M_\alpha,N}(m_\alpha,x)n -\varphi_{\beta}\circ(M_\ast(f)\boxtimes id_N)\circ\mcl{Y}_{M_\alpha,N}(m_\alpha,x)n\\
& = (\varphi_{\alpha}-\varphi_{\beta}\circ(M_\ast(f)\boxtimes id_N))\circ\mcl{Y}_{M_\alpha,N}(m_\alpha,x)n = 0,
\end{align*}
as required. Thus $\mcl{Y}$ is a well-defined linear map, and it is an intertwining operator since $(\opn{colim}_\alpha M_\alpha)\otimes_{\mathbb{C}} N$ is spanned by the images of $\psi_\alpha\otimes_{\mathbb{C}} id_N$ for $\alpha\in \opn{obj}(K)$, because $\mcl{Y}_{M_\alpha, N}$ is an intertwining operator, and because $\varphi_{\alpha}$ is a $V$-module homomorphism.

By assumption, the image of $\mcl{Y}$ is contained in a $\mcl{C}_0$-submodule of $\opn{colim}_\alpha(M_\alpha\boxtimes N)$, that is, there is a $V$-submodule $\widetilde{M}\subseteq\opn{colim}_\alpha(M_\alpha\boxtimes N)$ such that $\widetilde{M}\in\opn{obj}(\mcl{C}_0)$ and $\mcl{Y}$ factors as
\begin{equation*}
(\opn{colim}_\alpha M_\alpha)\otimes_{\mathbb{C}} N \xrightarrow{\widetilde{\mcl{Y}}} \widetilde{M}[\log x]\lbrace x\rbrace\hookrightarrow\opn{colim}_\alpha(M_\alpha\boxtimes N)[\log x]\lbrace x\rbrace
\end{equation*}
for some intertwining operator $\widetilde{\mcl{Y}}$ of type $\binom{\widetilde{M}}{\opn{colim}_\alpha M_\alpha\,\,N}$.
So by the universal property of tensor products in $\mcl{C}_0$, there is a unique map
\begin{equation*}
\Psi: (\opn{colim}_\alpha M_\alpha)\boxtimes N\longrightarrow \widetilde{M}\hookrightarrow\opn{colim}_\alpha(M_\alpha\boxtimes N)
\end{equation*}
such that the diagram
\begin{equation*}
\xymatrix{
(\opn{colim}_\alpha M_\alpha)\otimes_{\mathbb{C}} N \ar[d]_{\mcl{Y}_{\opn{colim}_\alpha M_\alpha, N}} \ar[r]^(.4){\mcl{Y}} & \opn{colim}_\alpha(M_\alpha\boxtimes N)[\log x]\lbrace x\rbrace\\
((\opn{colim}_\alpha M_\alpha)\boxtimes N)[\log x]\lbrace x\rbrace \ar[ru]_{\Psi} & \\
}
\end{equation*}
commutes. 

To show that $\Psi$ is the inverse of $\Phi$, we first compute
\begin{align*}
\Psi\circ\Phi\circ\varphi_{\alpha}\circ\mcl{Y}_{M_\alpha,N}(m_\alpha,x)n & =\Psi\circ(\psi_{\alpha}\boxtimes id_N)\circ\mcl{Y}_{M_\alpha,N}(m_\alpha,x)n\\
& = \Psi\circ\mathcal{Y}_{\opn{colim}_\alpha M_\alpha, N}(\psi_\alpha(m_\alpha),x)n\\
& =\mcl{Y}(\psi_\alpha(m_\alpha),x)n\\
& =\varphi_{\alpha}\circ\mcl{Y}_{M_\alpha,N}(m_\alpha,x)n
\end{align*}
for all $\alpha\in \opn{obj}(K)$, $m_\alpha\in M_\alpha$, and $n\in N$. Since $\opn{colim}_\alpha(M_\alpha\boxtimes N)$ is spanned by the images of $\varphi_{\alpha}$ by construction, and since $M_\alpha\boxtimes N$ is spanned by the coefficients of powers of $x$ and $\log x$ in $\mcl{Y}_{M_\alpha,N}(m_\alpha,x)n$ as $m_\alpha$ ranges over $M_\alpha$ and $n$ ranges over $N$ (see \cite[Proposition 4.23]{HLZ3}), it follows that $\Psi\circ\Phi$ is the identity on $\opn{colim}_\alpha(M_\alpha\boxtimes N)$. On the other hand,
\begin{align*}
\Phi\circ\Psi\circ\mcl{Y}_{\opn{colim}_\alpha M_\alpha, N}(\psi_\alpha(m_\alpha),x)n & =\Phi\circ\mcl{Y}(\psi_\alpha(m_\alpha),x)n\\
& =\Phi\circ\varphi_{\alpha}\circ\mcl{Y}_{M_\alpha,N}(m_\alpha,x)n\\
& =(\psi_{\alpha}\boxtimes id_N)\circ\mcl{Y}_{M_\alpha,N}(m_\alpha,x)n\\
& =\mcl{Y}_{\opn{colim}_\alpha M_\alpha, N}(\psi_\alpha(m_\alpha),x)n
\end{align*}
for all $\alpha\in \opn{obj}(K)$, $m_\alpha\in M_\alpha$, and $n\in N$. Since $\opn{colim}_\alpha M_\alpha$ is spanned by elements of the form $\psi_\alpha(m_\alpha)$ for $\alpha\in \opn{obj}(K)$ and $m_\alpha\in M_\alpha$, \cite[Proposition 4.23]{HLZ3} again implies that $(\opn{colim}_\alpha M_\alpha)\boxtimes N$ is spanned by coefficients of powers of $x$ and $\log x$ in series of the form $\mcl{Y}_{\opn{colim}_\alpha M_\alpha, N}(\psi_\alpha(m_\alpha),x)n$. It follows that $\Phi\circ\Psi$ is the identity on $(\opn{colim}_\alpha M_\alpha)\boxtimes N$, completing the proof that $\Phi$ is an isomorphism. In particular, $\opn{colim}_\alpha(M_\alpha\boxtimes N)\in\opn{obj}(\mcl{C}_0)$ since it is isomorphic to $(\opn{colim}_\alpha M_\alpha)\boxtimes N\in\opn{obj}(\mcl{C}_0)$.
\end{proof}

\begin{remark}\label{rem:VOA-tens-prod-co-cont}
Under the same conditions on $\mcl{C}_0$, the same proof shows that the natural map $\opn{colim}_\alpha(N\boxtimes M_\alpha)\rightarrow N\boxtimes(\opn{colim}_\alpha M_\alpha)$ is also an isomorphism for any object $N\in\opn{obj}(\mcl{C}_0)$ and any diagram $M_\ast: K\rightarrow\mcl{C}_0$ such that $K$ is essentially small and $\opn{colim}_\alpha M_\alpha\in\opn{obj}(\mcl{C}_0)$.
\end{remark}

Proposition \ref{prop:VOA-tens-prods-co-cont} shows that full subcategories $\mcl{C}_0$ of the category $\mcl{B}$ of generalized $V$-modules satisfy one of the main conditions of Theorem \ref{thm:abs_v1} under mild conditions. For the finite generation condition, we have the following proposition:
\begin{proposition}\label{prop:VOA-fin-gen}
If a generalized $V$-module is finitely generated as a $V$-module, then it is finitely generated in the sense of Definition \ref{def:fg}.
\end{proposition}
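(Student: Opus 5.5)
Let $M$ be a generalized $V$-module generated as a $V$-module by finitely many elements $w_1,\dots,w_k$, and suppose $M=\varinjlim_\alpha M_\alpha$ is written as a reduced filtered colimit in $\mcl{B}$. The plan is to first turn reducedness into the concrete statement that $M$ is a filtered union of submodules. Then I will use finite generation to locate all of the $w_i$ inside one term of the union.

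\textbf{Step 1: the structure maps are injective.} For each $\alpha$, the structure map $f_\alpha:M_\alpha\to M$ factors as
\[
M_\alpha\to M_\alpha/\opn{ker}(f_\alpha)\to M,
\]
and the first map is an epimorphism in $\mcl{B}$. Reducedness says this epimorphism is an isomorphism, so $f_\alpha$ is injective. We may therefore identify each $M_\alpha$ with its image $f_\alpha(M_\alpha)\subseteq M$, a $V$-submodule. Because $K$ is filtered, the family of images is directed under inclusion: for $\alpha,\beta$ there is $\gamma$ with maps $\alpha\to\gamma$ and $\beta\to\gamma$, and compatibility of the cocone gives $f_\alpha(M_\alpha),f_\beta(M_\beta)\subseteq f_\gamma(M_\gamma)$.

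\textbf{Step 2: $M$ is the union of the $f_\alpha(M_\alpha)$.} The explicit colimit construction recalled above shows that $\varinjlim_\alpha M_\alpha$ is spanned by the elements $\psi_\alpha(m_\alpha)$. Since the isomorphism $\varinjlim_\alpha M_\alpha\cong M$ carries $\psi_\alpha$ to $f_\alpha$, we get $M=\sum_\alpha f_\alpha(M_\alpha)$. A directed sum of submodules is their union, so $M=\bigcup_\alpha f_\alpha(M_\alpha)$.

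\textbf{Step 3: conclude.} Each generator $w_i$ lies in some $f_{\alpha_i}(M_{\alpha_i})$. By filteredness there is an index $\gamma$ receiving maps from all $\alpha_1,\dots,\alpha_k$, so all the $w_i$ lie in the $V$-submodule $f_\gamma(M_\gamma)$. That submodule then contains the submodule generated by the $w_i$, which is $M$. Hence $f_\gamma$ is surjective as well as injective, and so it is an isomorphism in $\mcl{B}$. This is exactly the condition in Definition \ref{def:fg}.

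The only point needing care is Step 1: reducedness is phrased abstractly through epimorphisms in $\mcl{B}$, and it has to be turned into injectivity of the structure maps. This works because $\mcl{B}$ is abelian, with kernels and quotients computed as in vector spaces, and it uses nothing beyond the definitions. Steps 2 and 3 are routine.
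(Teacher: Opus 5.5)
Your proposal is correct and follows essentially the same route as the paper. Reducedness makes the structure maps injective, the explicit colimit construction plus filteredness makes $M$ the union of their images, and filteredness again puts all finitely many generators in one image, forcing that structure map to be an isomorphism.
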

\begin{proof}
Let $M$ be a generalized $V$-module with finitely many generators $b_1,\ldots, b_N$ (that is, the smallest $V$-submodule of $M$ that contains all of $b_1,\ldots, b_N$ is $M$ itself). Let $\varinjlim_\alpha M_\alpha$ be any realization of $M$ as a reduced filtered colimit in the category $\mcl{B}$ of generalized $V$-modules. Here, $M=\varinjlim_\alpha M_\alpha$ is the colimit of a diagram $M_\ast: K\rightarrow\mcl{B}$ where $K$ is an essentially small filtered category. Let $\varphi_\alpha: M_\alpha\rightarrow M$ for $\alpha\in \opn{obj}(K)$ be the structure maps of the filtered colimit; they are injective because the filtered colimit is reduced. By construction, $M=\varinjlim_\alpha M_\alpha$ is spanned by the images of $\varphi_\alpha$ for $\alpha\in \opn{obj}(K)$, and because $K$ is filtered, $M$ is also the union of the images of $\varphi_\alpha$ for $\alpha\in \opn{obj}(K)$.

 Now since $M=\bigcup_{\alpha\in A} \mathrm{Im}\,\varphi_\alpha$, there exist $\alpha_1,\ldots,\alpha_N\in \opn{obj}(K)$ such that the generator $b_i$ of $M$ is contained in $\opn{Im}\,\varphi_{\alpha_i}$ for $i=1,\ldots, N$, that is, $b_i=\varphi_{\alpha_i}(m_{i})$ for some $m_i\in M_{\alpha_i}$. Since $K$ is filtered, there is some $\gamma\in \opn{obj}(K)$ such that there exist morphisms $f_i: k_{\alpha_i}\rightarrow k_\gamma$ for all $i=1,\ldots, N$. Then
\begin{equation*}
b_i =\varphi_{\alpha_i}(m_i)=\varphi_\gamma(M_{f_i}(m_i)),
\end{equation*}
so $b_i\in\mathrm{Im}\,\varphi_\gamma$ for all $i$. Since $b_1,\ldots, b_N$ generate $M$ as a $V$-module, it follows that $\opn{Im}\varphi_\gamma =W$ and thus $\varphi_\gamma$ is surjective as well as injective. That is, $\varphi_\gamma$ is an isomorphism, and thus $M$ is finitely generated in the sense of Definition \ref{def:fg}.
\end{proof}

\subsection{Main results for vertex operator algebras}

We are now ready to state the main application of Theorem \ref{thm:abs_v1} to vertex operator algebras:
\begin{theorem}\label{thm:braid-mon-for-C0}
Let $V$ be a vertex operator algebra, let $\mcl{C}_0$ be an essentially small full subcategory of the category $\mcl{B}$ of generalized $V$-modules, and let $\mcl{C}$ be the full subcategory of $\mcl{B}$ whose objects are isomorphic to essentially small filtered colimits of diagrams $M_\ast: K\rightarrow\mcl{C}_0$. Assume also that:
\begin{enumerate}

\item $\mcl{C}_0$ is closed under finite coproducts and arbitrary quotients.

\item $\mcl{C}_0$ admits a braided monoidal structure whose tensor product functor is given by tensor products relative to $\mcl{C}_0$.

\item For any intertwining operator $\mcl{Y}$ of type $\binom{M_3}{M_1\,M_2}$ where $M_1$, $M_2\in\opn{obj}(\mcl{C}_0)$ and $M_3\in\opn{obj}(\mcl{C})$, the image of $\mcl{Y}$ is contained in a $\mcl{C}_0$-submodule of $M_3$.

\item All objects of $\mcl{C}_0$ are finitely generated as generalized $V$-modules.

\end{enumerate}
Then there is a unique braided monoidal structure on $\mcl{C}$ such that:
\begin{enumerate}

\item[(a)] The inclusion $i:\mcl{C}_0\rightarrow\mcl{C}$ is braided monoidal with trivial structure maps $i(M)\boxtimes i(N)\xrightarrow{=} i(M\boxtimes N)$.

\item[(b)] The tensor product functor $\boxtimes: \mcl{C}\times\mcl{C}\rightarrow\mcl{C}$ is cocontinuous in each factor.
\end{enumerate}
\end{theorem}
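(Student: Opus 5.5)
The plan is to deduce the statement from Theorem \ref{thm:abs_v1} by verifying that the triple $\mcl{C}_0\subseteq\mcl{C}\subseteq\mcl{B}$ satisfies every hypothesis of the setting of Section \ref{sect:update}. Braided monoidality of the extension, uniqueness, and properties (a) and (b) then come directly from Theorem \ref{thm:abs_v1}, with $\boxtimes$ playing the role of $\ot$.

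First I would check the basic setting of Section \ref{sect:setting}.
\begin{itemize}
\item $\mcl{B}$ is cocomplete abelian, with colimits built concretely as quotients of direct sums.
\item Condition (C3) holds. Filtered colimits in $\mcl{B}$ are computed on underlying vector spaces, since the forgetful functor to $\opn{Vect}$ is faithful, exact and cocontinuous. So Remark \ref{rem:C3} applies.
\item $\mcl{C}_0$ is essentially small by assumption.
\item $\mcl{C}_0$ is closed under finite coproducts and quotients, hence under cokernels. It is therefore stable under finite colimits in $\mcl{B}$, and (C2) holds.
\item Condition (C1) follows from assumption (4) together with Proposition \ref{prop:VOA-fin-gen}.
\item The description of $\mcl{C}$ as filtered colimits of $\mcl{C}_0$-objects agrees with "generated under small colimits." Any small colimit of $\mcl{C}_0$-objects is a filtered colimit of finite colimits, and those finite colimits lie in $\mcl{C}_0$.
\end{itemize}

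Next I would verify the monoidal hypotheses. By assumption (2), $\boxtimes$ is a braided monoidal structure on $\mcl{C}_0$. Each factor of $\boxtimes$ must be finitely cocontinuous and must satisfy condition (T). Both follow from Proposition \ref{prop:VOA-tens-prods-co-cont} and Remark \ref{rem:VOA-tens-prod-co-cont}: these give $\opn{colim}_\alpha(M_\alpha\boxtimes N)\cong(\opn{colim}_\alpha M_\alpha)\boxtimes N$ for any essentially small diagram in $\mcl{C}_0$ whose colimit in $\mcl{B}$ lies in $\mcl{C}_0$. In particular this covers finite colimits and the filtered colimits referenced in (T).

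The main obstacle is a mismatch of hypotheses. Proposition \ref{prop:VOA-tens-prods-co-cont} assumes the image condition for intertwining operators with target $M_3\in\opn{obj}(\mcl{B})$, whereas assumption (3) gives it only for $M_3\in\opn{obj}(\mcl{C})$. I would handle this by rereading the proof. The target used there is $\opn{colim}_\alpha(M_\alpha\boxtimes N)$, a colimit of $\mcl{C}_0$-objects and hence an object of $\mcl{C}$. So the argument goes through verbatim under assumption (3).

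One further subtlety concerns colimits in (T), which are computed in $\mcl{C}$ rather than $\mcl{B}$. These agree, because $\mcl{C}$ is closed under colimits in $\mcl{B}$ and is a full subcategory. With all hypotheses verified, Theorem \ref{thm:abs_v1} yields the unique braided monoidal structure on $\mcl{C}$ with properties (a) and (b).
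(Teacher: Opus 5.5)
Your proposal is correct and matches the paper's proof. Both check finite-colimit stability and (C1)--(C3) using assumptions (1) and (4), Proposition \ref{prop:VOA-fin-gen} and Remark \ref{rem:C3}; both obtain finite cocontinuity and (T) from Proposition \ref{prop:VOA-tens-prods-co-cont} and Remark \ref{rem:VOA-tens-prod-co-cont}, and then apply Theorem \ref{thm:abs_v1}. You also observe that the target $\opn{colim}_\alpha(M_\alpha\boxtimes N)$ lies in $\mcl{C}$, so assumption (3) is enough in place of the $\mcl{B}$-version of the intertwining-operator hypothesis, and that "filtered colimits" and "small colimits" give the same $\mcl{C}$; the paper leaves both points implicit. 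The paper additionally notes that $\boxtimes$ preserves surjections via \cite[Proposition 4.26]{HLZ3}, but that is not a hypothesis of Section \ref{sect:update} and follows from finite cocontinuity anyway.
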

\begin{proof}
The category $\mcl{C}_0$ is stable under finite colimits in $\mcl{B}$ and satisfies condition (C2) of Section \ref{sect:setting} due to assumption (1). Condition (C1) of Section \ref{sect:setting} follows from assumption (4) and Proposition \ref{prop:VOA-fin-gen}, and in view of Remark \ref{rem:C3}, condition (C3) holds because the forgetful functor from $\mcl{B}$ to $\mathbb{C}$-vector spaces is faithful, exact, and cocontinuous (because a colimit in $\mcl{B}$ is just a $V$-module structure on the colimit of the underlying vector spaces).
The cocontinuity requirements for the tensor product on $\mcl{C}_0$ in Theorem \ref{thm:abs_v1} hold by Proposition \ref{prop:VOA-tens-prods-co-cont} and Remark \ref{rem:VOA-tens-prod-co-cont}, 
and the tensor product on $\mcl{C}_0$ preserves surjections by the proof of \cite[Proposition 4.26]{HLZ3}. Now the result follows from Theorem \ref{thm:abs_v1}.
\end{proof}

As in \cite[Section 6]{CMY-completions}, we need to address whether the braided monoidal structure on $\mcl{C}$ given by Theorem \ref{thm:braid-mon-for-C0} is vertex algebraically natural, that is, whether it agrees with the braided monoidal structure on a category of generalized $V$-modules as described in \cite{HLZ8} (see also \cite[Section 3.3]{CKM}). In particular, not only should the tensor product module $X_1\boxtimes X_2$ for $X_1$, $X_2\in\opn{obj}(\mcl{C})$ be the tensor product relative to $\mcl{C}$ as in Definition \ref{def:tens-prod}, but also the unit object should be $V$, and the unit, associativity, and braiding isomorphisms of $\mcl{C}$ should be as described in \cite{HLZ8}. An obvious necessary condition for this is that the underlying braided monoidal structure on $\mcl{C}_0$ should be that described in \cite{HLZ8}. It turns out this is also sufficient for the braided monoidal structure on $\mcl{C}$ given in Theorem \ref{thm:braid-mon-for-C0} to be the same as that described in \cite{HLZ8}. To prove this, we could apply similar arguments to those of \cite[Section 6]{CMY-completions} to the monoidal structure on $\mcl{C}$ obtained in the proof of Theorem \ref{thm:abs_v1}. But we can avoid repeating such arguments by making the following observations.

Under the extra assumption that $\mcl{C}_0$ is closed under submodules, \cite[Theorem 5.3]{CMY-completions} obtained a braided monoidal structure on $\mcl{C}$ that was shown in \cite[Theorems 6.2 and 6.3]{CMY-completions} to agree with the vertex algebraic braided monoidal structure of \cite{HLZ8}. This braided monoidal structure on $\mcl{C}$ extends that of $\mcl{C}_0$ by \cite[Theorem 5.4]{CMY-completions}, and because its tensor product is that of Definition \ref{def:tens-prod}, the proof of Proposition \ref{prop:VOA-tens-prods-co-cont} shows that it is cocontinuous. Thus if $\mcl{C}_0$ is closed under submodules, the braided monoidal structure on $\mcl{C}$ obtained in \cite{CMY-completions} has to be equivalent to the one given in Theorem \ref{thm:braid-mon-for-C0}. In particular, the braided monoidal structure of Theorem \ref{thm:braid-mon-for-C0} is the vertex algebraic one of \cite{HLZ8} at least when $\mcl{C}_0$ is closed under submodules.

Careful examination of the arguments in \cite{CMY-completions} shows that the only essential use of the assumption that $\mcl{C}_0$ is closed under submodules occurs in \cite[Lemma 4.3]{CMY-completions}, which is then used to prove \cite[Proposition 5.2]{CMY-completions}. 
Fortunately, it turns out that the proof of Proposition \ref{prop:loc_tensor} above provides a proof of \cite[Proposition 5.2]{CMY-completions} without assuming that $\mcl{C}_0$ is closed under submodules.
To explain why, recall that, using the notation of Section \ref{subsec:monoidality} (except that we now use $\boxtimes$ for tensor products in $\opn{Ind}(\mcl{C}_0)$ instead of $\otimes$), the proof of Proposition \ref{prop:loc_tensor} shows that the maps
\begin{align*}
\pi(u_{M_A}\boxtimes id_{N_B}): \pi(M_A\boxtimes N_B)=\varinjlim_{\alpha,\beta} M_\alpha\boxtimes N_\beta & \longrightarrow\pi(M_{\opn{Can}}\boxtimes N_B)\\
\pi(id_{M_A}\boxtimes u_{N_B}): \pi(M_A\boxtimes N_B)=\varinjlim_{\alpha,\beta} M_\alpha\boxtimes N_\beta & \longrightarrow\pi(M_A\boxtimes N_{\opn{Can}})
\end{align*}
are isomorphisms for $M_A,N_B\in\opn{obj}(\opn{Ind}(\mcl{C}_0))$. But \cite[Proposition 5.2]{CMY-completions} is simply the assertion that $\pi(u_{M_A}\boxtimes id_{N_B})$ is an isomorphism in the special case $M_A=(M_1)_{\opn{Can}}\boxtimes (M_2)_{\opn{Can}}$ for $M_1,M_2\in\opn{obj}(\mcl{C})$, and that $\pi(id_{M_A}\boxtimes u_{N_B})$ is an isomorphism in the special case $N_B=(N_1)_{\opn{Can}}\boxtimes (N_2)_{\opn{Can}}$ for $N_1,N_2\in\opn{obj}(\mcl{C})$, so Proposition \ref{prop:loc_tensor} shows that \cite[Proposition 5.2]{CMY-completions} holds in the setting of Theorem \ref{thm:braid-mon-for-C0}.

\begin{remark}\label{rem:M-Can-vs-alpha-M}
We remark that (the equivalent of) $M_{\opn{Can}}$ in \cite{CMY-completions} is defined slightly differently than in Definition \ref{def:M-Can} (where $M_{\opn{Can}}(f_\alpha)=\opn{Im} f_\alpha$ for a map $f_\alpha$ in the essentially small filtered category $K_M$ of injections from $\mcl{C}_0$-objects $M_\alpha$ to $M$).
 In \cite{CMY-completions}, the equivalent of $M_{\opn{Can}}$ for $M\in\opn{obj}(\mcl{C})$ is denoted $\alpha_M$ and is defined to be the functor from the directed set $I_M$ of $\mcl{C}_0$-submodules of $M$ (ordered by inclusion, so that we can view $I_M$ as a small filtered category) to $\mcl{C}_0$ that sends a $\mcl{C}_0$-submodule of $M$ to itself. But there is no essential difference between $M_{\opn{Can}}$ of Definition \ref{def:M-Can} and $\alpha_M$ of \cite{CMY-completions}, because the functor $K_M\rightarrow I_M$ given on objects by $f_\alpha\mapsto\opn{Im} f_\alpha$ is an equivalence, and under this equivalence, $M_{\opn{Can}}$ is identified with $\alpha_M$.
\end{remark}

Since Proposition \ref{prop:loc_tensor} shows that \cite[Proposition 5.2]{CMY-completions} holds even if $\mcl{C}_0$ is not closed under submodules, the remaining arguments of \cite{CMY-completions} show that $\mcl{C}$ admits the vertex algebraic braided monoidal structure of \cite{HLZ8}. As argued above, this braided monoidal structure on $\mcl{C}$ is equivalent to that of Theorem \ref{thm:braid-mon-for-C0}, so we conclude:
\begin{theorem}\label{thm:vertex-mon-for-C0}
Let $V$ be a vertex operator algebra, let $\mcl{C}_0$ be an essentially small full subcategory of the category $\mcl{B}$ of generalized $V$-modules, and let $\mcl{C}$ be the full subcategory of $\mcl{B}$ whose objects are the unions of submodules which are objects of $\mcl{C}_0$. Assume also that:
\begin{enumerate}

\item $\mcl{C}_0$ is closed under finite coproducts and arbitrary quotients.

\item $\mcl{C}_0$ admits the vertex algebraic braided monoidal structure of \cite{HLZ8}, so that in particular $V$ is the unit object of $\mcl{C}_0$ and the tensor product functor of $\mcl{C}_0$ is given by tensor products relative to $\mcl{C}_0$.

\item For any intertwining operator $\mcl{Y}$ of type $\binom{M_3}{M_1\,M_2}$ where $M_1$, $M_2\in\opn{obj}(\mcl{C}_0)$ and $M_3\in\opn{obj}(\mcl{C})$, the image of $\mcl{Y}$ is contained in a $\mcl{C}_0$-submodule of $M_3$.

\item All objects of $\mcl{C}_0$ are finitely generated as generalized $V$-modules.

\end{enumerate}
Then $\mcl{C}$ is additive and cocomplete and admits the vertex algebraic braided monoidal structure of \cite{HLZ8}. Furthermore, the inclusion $\mcl{C}_0\hookrightarrow\mcl{C}$ is braided monoidal and the tensor product $\boxtimes: \mcl{C}\times\mcl{C}\rightarrow\mcl{C}$ is cocontinuous in each factor.
\end{theorem}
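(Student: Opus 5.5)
The plan is to combine Theorem \ref{thm:braid-mon-for-C0} with the vertex-algebraic construction of \cite{CMY-completions}, and then match the two structures by uniqueness.

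First, the hypotheses of the present theorem imply those of Theorem \ref{thm:braid-mon-for-C0}. Condition (2) here is a special case of condition (2) there, and conditions (1), (3), (4) are identical. The description of $\mcl{C}$ as unions of $\mcl{C}_0$-submodules agrees with its description as filtered colimits. This uses (C2) and the reducedness discussion of Section \ref{sect:setting}: every filtered colimit of $\mcl{C}_0$-objects can be replaced by the reduced system of images, which are $\mcl{C}_0$-submodules because $\mcl{C}_0$ is closed under quotients.

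Additivity and cocompleteness of $\mcl{C}$ come from Corollary \ref{cor:470}, which identifies $\mcl{C}$ with a localization of $\opn{Ind}(\mcl{C}_0)$, in the form of Proposition \ref{prop:438}. There $\mcl{C}$ is a reflective-type localization: $\pi$ is a left adjoint with fully faithful right adjoint $\opn{Can}$. Hence colimits in $\mcl{C}$ are computed as $\pi$ of colimits in $\opn{Ind}(\mcl{C}_0)$. Concretely, a colimit in $\mcl{C}$ is the colimit taken in $\mcl{B}$, which is again a union of images of $\mcl{C}_0$-objects and so lies in $\mcl{C}$. Finite biproducts are inherited from $\mcl{B}$.

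Theorem \ref{thm:braid-mon-for-C0} then gives a braided monoidal structure on $\mcl{C}$ that extends that of $\mcl{C}_0$ strictly and is cocontinuous in each factor. What remains is to show this structure is the one of \cite{HLZ8}.

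For that step I would run the construction of \cite[Theorem 5.3]{CMY-completions}. In that construction, $M_1\boxtimes M_2$ is defined as the colimit of the $M_\alpha\boxtimes N_\beta$ over the canonical systems, with intertwining operator induced from the $\mcl{Y}_{M_\alpha,N_\beta}$. Hypothesis (3) plus Proposition \ref{prop:VOA-fin-gen}-type finite generation give the universal property relative to $\mcl{C}$. The associativity and braiding isomorphisms were verified in \cite[Section 6]{CMY-completions} to be those of \cite{HLZ8}.

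The only input there that used closure of $\mcl{C}_0$ under submodules is \cite[Lemma 4.3]{CMY-completions}, which serves \cite[Proposition 5.2]{CMY-completions}. As explained just before the statement, Proposition \ref{prop:loc_tensor} and Remark \ref{rem:M-Can-vs-alpha-M} supply \cite[Proposition 5.2]{CMY-completions} directly. This is because the maps $\pi(u\boxtimes id)$ and $\pi(id\boxtimes u)$ are isomorphisms for arbitrary atlases, in particular for those built from canonical atlases.

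So the \cite{CMY-completions} arguments go through and produce the vertex algebraic braided monoidal structure on $\mcl{C}$. That structure restricts to the given one on $\mcl{C}_0$ by \cite[Theorem 5.4]{CMY-completions}. Its tensor product is given by tensor products relative to $\mcl{C}$, so it is cocontinuous in each factor by the argument of Proposition \ref{prop:VOA-tens-prods-co-cont}, with colimits computed in $\mcl{B}$. By the uniqueness clause of Theorem \ref{thm:braid-mon-for-C0}, it coincides with the abstract structure. This yields all the claims.

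The main obstacle is auditing \cite{CMY-completions} to confirm that submodule-closure is used nowhere else. In particular, the existence of the tensor product relative to $\mcl{C}$, the unit isomorphisms with $V$, and the associativity isomorphisms must not secretly use that images or kernels of maps between $\mcl{C}_0$-objects stay in $\mcl{C}_0$. At each such point I would substitute the image-quotient construction: quotients are allowed, and images in $\mcl{C}$-objects are quotients of $\mcl{C}_0$-objects.
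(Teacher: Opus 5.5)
Your proposal follows the paper's own argument. You obtain the abstract structure from Theorem \ref{thm:braid-mon-for-C0}, then rerun \cite{CMY-completions} with \cite[Proposition 5.2]{CMY-completions} supplied by Proposition \ref{prop:loc_tensor}, since the only essential use of submodule-closure is \cite[Lemma 4.3]{CMY-completions}. Finally you identify the two structures using \cite[Theorem 5.4]{CMY-completions}, cocontinuity of the vertex-algebraic tensor product via the argument of Proposition \ref{prop:VOA-tens-prods-co-cont}, and the uniqueness clause. The one small difference is that you justify cocompleteness of $\mcl{C}$ directly (as a reflective localization, or as unions of images in $\mcl{B}$) rather than citing \cite[Proposition 4.3]{CMY-completions}, which is equally valid.
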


\begin{remark}
The assertion in this theorem that $\mcl{C}$ is cocomplete follows from \cite[Proposition 4.3]{CMY-completions}, for example, which shows that $\mcl{C}$ is closed under small coproducts and quotients, and thus also is closed under small colimits. The assertion that the tensor product on $\mcl{C}$ is cocontinuous follows not only from Theorem \ref{thm:abs_v1} but also more directly from Proposition \ref{prop:VOA-tens-prods-co-cont} and Remark \ref{rem:VOA-tens-prod-co-cont}, given that the tensor product on $\mcl{C}$ is the vertex algebraic one of Definition \ref{def:tens-prod}.
\end{remark}

%
%

\subsection{\texorpdfstring{$C_1$}{C1}-cofinite modules}

Now we want to show that Theorem \ref{thm:vertex-mon-for-C0} applies to at least one reasonable category $\mcl{C}_0$ of generalized $V$-modules for any vertex operator algebra $V$. To this end, we say that
a generalized $V$-module $M$ is \textit{lower bounded} if its generalized $L_0$-eigenvalue grading $M=\bigoplus_{h\in\mathbb{C}} M_{[h]}$ has a lower bound, that is, $M_{[h]}=0$ for $\opn{Re}\,h$ sufficiently negative. Then a \textit{$C_1$-cofinite $V$-module} is a lower bounded generalized $V$-module $M$ such that $\dim M/C_1(M)<\infty$, where
\begin{equation*}
C_1(M)=\opn{span}\left\lbrace v_{-1}m\,\,\bigg\vert\,\,v\in\bigoplus_{n=1}^\infty V_{(n)},\,m\in M\right\rbrace.
\end{equation*}
Let $\mcl{C}_0$ be the full subcategory of $\mcl{B}$ consisting of $C_1$-cofinite $V$-modules. Then $\mcl{C}_0$ is a $\mathbb{C}$-linear additive category closed under finite direct sums and quotients, and thus is finitely cocomplete. Every $C_1$-cofinite $V$-module is countable dimensional (see for example \cite[Lemma 6]{Miy-C1}), and thus $\mcl{C}_0$ is also essentially small (any isomorphism class in $\mcl{C}_0$ is characterized by a set of operators $v_n$ for $v\in V$, $n\in\mathbb{Z}$ acting on a fixed countable dimensional vector space).

By \cite[Main Theorem]{Miy-C1}, the category $\mcl{C}_0$ of $C_1$-cofinite $V$-modules is closed under tensor products, and in fact, by \cite[Theorem 1.2]{Hu-C1}, tensor products relative to $\mcl{C}_0$ give $\mcl{C}_0$ the structure of a vertex algebraic braided monoidal category as in \cite{HLZ8}. The category $\mcl{C}_0$ also satisfies the intertwining operator condition (3) of Theorem \ref{thm:vertex-mon-for-C0} by \cite[Key Theorem]{Miy-C1} and \cite[Corollary 2.12]{CMY-completions} (see also \cite{Nahm}). Finally, it is easy to see that every object $M$ of $\mcl{C}_0$ is finitely generated as a generalized $V$-module, for example by homogeneous basis elements of a finite-dimensional homogeneous subspace $X$ such that $M=C_1(M)\oplus X$ as graded vector spaces (see for example \cite[Proposition 2.1]{CMY-completions}). Thus Theorem \ref{thm:vertex-mon-for-C0} yields:

\begin{theorem}\label{thm:braid-mon-for-C1}
Let $\mcl{C}_0$ be the category of $C_1$-cofinite modules for any vertex operator algebra $V$, and let $\mcl{C}$ be the full subcategory of generalized $V$-modules whose objects are the unions of submodules which are objects of $\mcl{C}_0$. Then $\mcl{C}$ is additive and cocomplete, and $\mcl{C}$ admits the vertex algebraic braided monoidal structure of \cite{HLZ8}. Furthermore, the inclusion $\mcl{C}_0\hookrightarrow\mcl{C}$ is braided monoidal and the tensor product $\boxtimes: \mcl{C}\times\mcl{C}\rightarrow\mcl{C}$ is cocontinuous in each factor.
\end{theorem}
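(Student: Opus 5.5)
The plan is to deduce Theorem \ref{thm:braid-mon-for-C1} directly from Theorem \ref{thm:vertex-mon-for-C0}. That means checking that the category $\mcl{C}_0$ of $C_1$-cofinite $V$-modules satisfies the standing hypotheses (essential smallness) and the four numbered assumptions. Most of these checks have been assembled in the paragraph just before the statement, so the proof mainly organizes them and verifies the few points that need care.

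\textbf{Essential smallness and assumption (1).} Every $C_1$-cofinite module is countable dimensional by \cite[Lemma 6]{Miy-C1}. Hence each isomorphism class is represented by a choice of operators $v_n$ on a fixed countable dimensional space, which gives a set of isomorphism classes. For (1), closure under finite direct sums is immediate, since $C_1(M\oplus N)=C_1(M)\oplus C_1(N)$ and lower boundedness is preserved. For closure under quotients, let $q:M\to M/K$. Then $q(C_1(M))=C_1(M/K)$, because $q(v_{-1}m)=v_{-1}q(m)$. It follows that $\dim (M/K)/C_1(M/K)\le \dim M/C_1(M)$, and a quotient of a lower bounded module is lower bounded.

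\textbf{Assumption (2).} Closure of $\mcl{C}_0$ under tensor products relative to $\mcl{C}_0$ is \cite[Main Theorem]{Miy-C1}. The existence of the vertex algebraic braided monoidal structure of \cite{HLZ8}, with unit $V$, is \cite[Theorem 1.2]{Hu-C1}. Note that $V$ itself is $C_1$-cofinite, as part of that result.

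\textbf{Assumption (3).} This is the step I expect to be the main obstacle, because the target $M_3$ lies only in $\mcl{C}$, not in $\mcl{C}_0$. The plan is to invoke \cite[Key Theorem]{Miy-C1} together with \cite[Corollary 2.12]{CMY-completions} (cf.\ \cite{Nahm}). By these results, the span of the coefficients of $\mcl{Y}(m_1,x)m_2$, as $m_i$ range over $C_1$-cofinite $M_1,M_2$, is a $V$-submodule of $M_3$. This submodule is a quotient of the tensor product of $M_1$ and $M_2$ in the category of lower bounded generalized modules, and it is $C_1$-cofinite. It therefore gives the required $\mcl{C}_0$-submodule of $M_3$ containing the image of $\mcl{Y}$. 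One should check that lower boundedness of this image is guaranteed, which holds by the lower truncation property combined with the finiteness coming from $C_1$-cofiniteness.

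\textbf{Assumption (4) and conclusion.} Finite generation follows by choosing a finite-dimensional graded complement $X$ with $M=C_1(M)\oplus X$. An induction on the lower bounded grading, as in \cite[Proposition 2.1]{CMY-completions}, shows that a homogeneous basis of $X$ generates $M$. With all four hypotheses verified, Theorem \ref{thm:vertex-mon-for-C0} yields each assertion of Theorem \ref{thm:braid-mon-for-C1}: $\mcl{C}$ is additive and cocomplete, it carries the vertex algebraic braided monoidal structure of \cite{HLZ8}, the inclusion $\mcl{C}_0\hookrightarrow\mcl{C}$ is braided monoidal, and $\boxtimes$ is cocontinuous in each factor.
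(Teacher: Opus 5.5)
Your proposal is correct and follows the paper's argument. The paper likewise deduces the result from Theorem \ref{thm:vertex-mon-for-C0}, citing \cite[Lemma 6]{Miy-C1} for essential smallness, \cite[Main Theorem]{Miy-C1} and \cite[Theorem 1.2]{Hu-C1} for assumption (2), \cite[Key Theorem]{Miy-C1} with \cite[Corollary 2.12]{CMY-completions} for assumption (3), and \cite[Proposition 2.1]{CMY-completions} for finite generation. Your aside that the image of $\mcl{Y}$ is a quotient of a tensor product in the category of lower bounded generalized modules is unnecessary, and the existence of such a tensor product is not automatic; the cited results give $C_1$-cofiniteness of the image, including lower boundedness, directly.
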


\section{Another proof of Theorem \texorpdfstring{\ref{thm:vertex-mon-for-C0}}{6.7}}\label{sec:another-proof}

This section is intended as a supplement to \cite{CMY-completions} and can be read largely independently of the rest of this paper. The purpose is to show how Theorem \ref{thm:vertex-mon-for-C0} can be proved more vertex algebraically by repeating and modifying where necessary the more concrete arguments of \cite{CMY-completions}, rather than using the more abstract category-theoretic arguments of this paper. As discussed in Section \ref{sec:vertex-alg}, Theorem \ref{thm:vertex-mon-for-C0} (without the assertion that the tensor product on $\mcl{C}$ is cocontinuous in each factor) is proved in \cite[Theorem 1.1]{CMY-completions} under the additional assumption that the category $\mcl{C}_0$ of Theorem \ref{thm:vertex-mon-for-C0} is closed under submodules. Thus we just need to replace any arguments in \cite{CMY-completions} that used this extra assumption on $\mcl{C}_0$ with new proofs that do not use this assumption. Then the rest of the proof of Theorem \ref{thm:vertex-mon-for-C0} will follow exactly the arguments of \cite{CMY-completions}, and at the end we use Proposition \ref{prop:VOA-tens-prods-co-cont} to show that the tensor product on $\mcl{C}$ is cocontinuous.

Careful examination shows that the assumption in \cite{CMY-completions} that $\mcl{C}_0$ is closed under submodules was used directly in only two places, in the proofs of \cite[Lemma 4.3]{CMY-completions} and \cite[Proposition 4.3]{CMY-completions}. In the proof of \cite[Proposition 4.3]{CMY-completions}, the assumption was only used to show that $\mcl{C}$ is closed under submodules; this is not a problem here because Theorem \ref{thm:vertex-mon-for-C0} does not claim that $\mcl{C}$ is closed under submodules in general. However, \cite[Lemma 4.3]{CMY-completions} was used in the proof of \cite[Proposition 5.2]{CMY-completions}, which plays a crucial role in the construction of the associativity isomorphisms in $\mcl{C}$ and the proof of the pentagon. Thus it suffices to give a different proof of \cite[Proposition 5.2]{CMY-completions} in the general setting of Theorem \ref{thm:vertex-mon-for-C0}. As mentioned in Section \ref{sec:vertex-alg}, \cite[Proposition 5.2]{CMY-completions} in fact follows from Propositions \ref{prop:loc_tensor} and \ref{prop:VOA-tens-prods-co-cont}, but here we present a more direct proof in the vertex algebraic context that does not require Proposition \ref{prop:VOA-tens-prods-co-cont}.

As was again discussed in Section \ref{sec:vertex-alg}, \cite[Proposition 5.2]{CMY-completions} is a special case of the assertion that certain maps $\Phi_{M_A,N_B}:=\pi(u_{M_A}\boxtimes id_{N_B})$ and $\Psi_{M_A,N_B}:=\pi(id_{M_A}\boxtimes u_{N_B})$ are isomorphisms. We repeat the definition of these maps here, except that for consistency with \cite{CMY-completions}, we slightly modify the definition of the functor $M_{\opn{Can}}$ of Definition \ref{def:M-Can} to agree more literally with the functor denoted $\alpha_M$ in \cite{CMY-completions} (see Remark \ref{rem:M-Can-vs-alpha-M}). To begin, recall that given directed sets $A$ and $B$, which we view as small filtered categories, and given functors $M_A: A\rightarrow\mcl{C}_0$ and $N_B: B\rightarrow\mcl{C}_0$ (which we denote on objects by $\alpha\mapsto M_\alpha$, $\beta\mapsto N_\beta$ for $\alpha\in A$, $\beta\in B$), their tensor product
\begin{equation*}
M_A\boxtimes N_B: A\times B\longrightarrow\mcl{C}_0
\end{equation*}
is given on objects by $(\alpha,\beta)\mapsto M_\alpha\boxtimes N_\beta$ for $\alpha\in A$, $\beta\in B$.

We denote the direct limits $\varinjlim M_A$ and $\varinjlim N_B$ in the category $\mcl{B}$ of generalized $V$-modules by $M$ and $N$, respectively, so $M$ and $N$ are both objects of $\mcl{C}$. We have the functor $M_{\opn{Can}}$ from the directed set of $\mcl{C}_0$-submodules of $M$ (ordered by inclusion) to $\mcl{C}_0$ which sends any $\mcl{C}_0$-submodule of $M$ to itself, and we similarly have the functor $N_{\opn{Can}}$. Let $\varphi_\alpha: M_\alpha\rightarrow\varinjlim M_A$ and $\psi_\beta: N_\beta\rightarrow\varinjlim N_B$ be the structure morphisms of the direct limits for $\alpha\in A$ and $\beta\in B$. Then we have unique maps
\begin{align*}
\Phi_{M_A,N_B}: \varinjlim M_A\boxtimes N_B & \longrightarrow \varinjlim M_{\opn{Can}}\boxtimes N_B\\
\Psi_{M_A,N_B}: \varinjlim M_A\boxtimes N_B & \longrightarrow \varinjlim M_A\boxtimes N_{\opn{Can}}
\end{align*}
such that the diagrams
\begin{equation*}
\xymatrixcolsep{3pc}
\xymatrix{
M_\alpha\boxtimes N_\beta \ar[d]^{\phi_{\alpha,\beta}} \ar[r]^{\varphi_\alpha\boxtimes id_{N_\beta}} & \mathrm{Im}\,\varphi_\alpha\boxtimes N_\beta \ar[d]_{\phi_{\mathrm{Im}\,\varphi_\alpha,\beta}}\\
\varinjlim M_A\boxtimes N_B \ar[r]^(.48){\Phi_{M_A,N_B}} & \varinjlim M_{\opn{Can}}\boxtimes N_B\\
}
\end{equation*}
and
\begin{equation*}
\xymatrixcolsep{3pc}
\xymatrix{
M_\alpha\boxtimes N_\beta \ar[d]^{\phi_{\alpha,\beta}} \ar[r]^{id_{M_\alpha}\boxtimes \psi_\beta} & M_\alpha\boxtimes \mathrm{Im}\,\psi_\beta \ar[d]^{\phi_{\alpha,\opn{Im}\,\psi_\beta}}\\
\varinjlim M_A\boxtimes N_B \ar[r]^(.48){\Psi_{M_A,N_B}} & \varinjlim M_A\boxtimes N_{\opn{Can}}\\
}
\end{equation*}
commute for $\alpha\in A$, $\beta\in B$. Here we use $\phi$ with subscripts to denote the structure maps of the tensor product direct limits.

To see why $\Phi_{M_A,N_B}$ is well defined, let $f_{\alpha_1}^{\alpha_2}:M_{\alpha_1}\rightarrow M_{\alpha_2}$ for $\alpha_1\leq\alpha_2$ in $A$ denote the transition map, that is, the image under $M_A$ of the unique morphism $\alpha_1\rightarrow\alpha_2$ in $A$. Similarly, we use $g_{\beta_1}^{\beta_2}: N_{\beta_1}\rightarrow N_{\beta_2}$ for $\beta_1\leq\beta_2$ in $B$ to denote the corresponding transition map of $N_B$. Since $\varphi_{\alpha_2}\circ f_{\alpha_1}^{\alpha_2}=\varphi_{\alpha_1}$, we have $\mathrm{Im}\,\varphi_{\alpha_1}\subseteq\opn{Im} \varphi_{\alpha_2}$, and thus we also have the transition map $f_{\opn{Im} \varphi_{\alpha_1}}^{\opn{Im}\varphi_{\alpha_2}}: \opn{Im}\varphi_{\alpha_1}\rightarrow\opn{Im}\varphi_{\alpha_2}$ of $M_{\opn{Can}}$, which is just the inclusion and satisfies
\begin{equation*}
f^{\opn{Im}\varphi_{\alpha_2}}_{\opn{Im}\varphi_{\alpha_1}}\circ\varphi_{\alpha_1}=\varphi_{\alpha_2}\circ f_{\alpha_1}^{\alpha_2}.
\end{equation*}
Then for any $\alpha_1\leq\alpha_2$ in $A$ and $\beta_1\leq\beta_2$ in $B$, we have
\begin{align*}
\phi_{\opn{Im}\varphi_{\alpha_2},\beta_2} & \circ(\varphi_{\alpha_2}\boxtimes id_{N_{\beta_2}})\circ(f_{\alpha_1}^{\alpha_2}\boxtimes g_{\beta_1}^{\beta_2})\\
& =\phi_{\mathrm{Im}\,\varphi_{\alpha_2},\beta_2}\circ(f_{\opn{Im}\varphi_{\alpha_1}}^{\opn{Im}\varphi_{\alpha_2}}\boxtimes g_{\beta_1}^{\beta_2})\circ(\varphi_{\alpha_1}\boxtimes id_{N_{\beta_1}})\\
&=\phi_{\mathrm{Im}\,\varphi_{\alpha_1},\beta_1}\circ(\varphi_{\alpha_1} \boxtimes id_{N_{\beta_1}})
\end{align*}
Then the universal property of $\varinjlim M_A\boxtimes N_B$ implies $\Phi_{M_A,N_B}$ exists and is unique. Similarly, $\Psi_{M_A,N_B}$ exists and is unique.

Now as mentioned in Section \ref{sec:vertex-alg}, \cite[Proposition 5.2]{CMY-completions} asserts that $\Phi_{M_A,N_B}$ is an isomorphism in the special case $M_A=(M_1)_{\opn{Can}}\boxtimes (M_2)_{\opn{Can}}$ for $M_1$, $M_2\in\opn{obj}(\mcl{C})$, and that $\Psi_{M_A,N_B}$ is an isomorphism in the special case $N_B=(N_1)_{\opn{Can}}\boxtimes (N_2)_{\opn{Can}}$ for $N_1$, $N_2\in\opn{obj}(\mcl{C})$. Thus Theorem \ref{thm:vertex-mon-for-C0} follows from the arguments of \cite{CMY-completions}, with the proof of \cite[Proposition 5.2]{CMY-completions} replaced by the proof of the following proposition:
\begin{proposition}\label{prop:key-prop}
In the setting of Theorem \ref{thm:vertex-mon-for-C0}. let $M_A: A\rightarrow\mcl{C}_0$ and $N_B: B\rightarrow\mcl{C}_0$ be functors from directed sets $A$ and $B$. Then the $\mcl{C}$-morphisms $\Phi_{M_A,N_B}$ and $\Psi_{M_A,N_B}$ are isomorphisms of generalized $V$-modules.
\end{proposition}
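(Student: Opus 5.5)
We treat $\Phi_{M_A,N_B}$; the case of $\Psi_{M_A,N_B}$ is symmetric. The plan follows the proof of Proposition \ref{prop:loc_tensor}, but argued directly with elements and intertwining operators. First we factor $\Phi_{M_A,N_B}$ through an intermediate direct limit. For $\alpha\leq\alpha'$ in $A$, set $\bar M_{\alpha,\alpha'}=\opn{Im} f_\alpha^{\alpha'}\cong M_\alpha/\ker f_\alpha^{\alpha'}$, which lies in $\mcl{C}_0$ because $\mcl{C}_0$ is closed under quotients. Since the direct limit in $\mcl{B}$ is the quotient of the direct sum by the usual relations, $\ker\varphi_\alpha=\bigcup_{\alpha'\geq\alpha}\ker f_\alpha^{\alpha'}$. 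Hence $\opn{Im}\varphi_\alpha\cong M_\alpha/\ker\varphi_\alpha=\varinjlim_{\alpha'\geq\alpha}\bar M_{\alpha,\alpha'}$ is a directed colimit of $\mcl{C}_0$-objects whose colimit is again in $\mcl{C}_0$ (it is a quotient of $M_\alpha$). Proposition \ref{prop:VOA-tens-prods-co-cont} then gives
\[
\opn{Im}\varphi_\alpha\boxtimes N_\beta\cong\varinjlim_{\alpha'\geq\alpha}\bar M_{\alpha,\alpha'}\boxtimes N_\beta .
\]
This is the key place where we avoid closure of $\mcl{C}_0$ under submodules.

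Next we show that $\varinjlim M_A\boxtimes N_B\cong\varinjlim_{\alpha,\beta}\opn{Im}\varphi_\alpha\boxtimes N_\beta$, the latter being a direct limit over $A\times B$ with inclusion transition maps. The map $\varphi_\alpha\boxtimes id$ factors as $M_\alpha\boxtimes N_\beta\to\bar M_{\alpha,\alpha'}\boxtimes N_\beta\to M_{\alpha'}\boxtimes N_\beta$, since $f_\alpha^{\alpha'}$ factors through its image. So every element killed in some $\bar M_{\alpha,\alpha'}\boxtimes N_\beta$ is already killed in $\varinjlim M_A\boxtimes N_B$. Combined with the displayed isomorphism, this yields a well-defined inverse map
\[
\opn{Im}\varphi_\alpha\boxtimes N_\beta\to\varinjlim M_A\boxtimes N_B,
\]
compatible in $\alpha$ and $\beta$. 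Surjectivity of $M_\alpha\boxtimes N_\beta\to\opn{Im}\varphi_\alpha\boxtimes N_\beta$ (tensor products preserve surjections, by \cite[Proposition 4.26]{HLZ3}) then shows that the two direct limits are isomorphic, compatibly with the structure maps.

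It remains to identify $\varinjlim_{\alpha,\beta}\opn{Im}\varphi_\alpha\boxtimes N_\beta$ with $\varinjlim M_{\opn{Can}}\boxtimes N_B$. The images $\opn{Im}\varphi_\alpha$ form a cofinal subsystem of the directed set of $\mcl{C}_0$-submodules of $M$. Indeed, each $\mcl{C}_0$-submodule $X\subseteq M$ is finitely generated (assumption (4)), so by the argument of Proposition \ref{prop:VOA-fin-gen} it lies in some $\opn{Im}\varphi_\gamma$. Cofinality gives the isomorphism, and one checks on generators that the composite isomorphism is $\Phi_{M_A,N_B}$.

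The main obstacle is the first step: making the cocontinuity in Proposition \ref{prop:VOA-tens-prods-co-cont} apply to the infinite system $\bar M_{\alpha,\alpha'}$. That proposition requires the colimit to lie in $\mcl{C}_0$, which holds here only because $\opn{Im}\varphi_\alpha$ is a quotient of $M_\alpha$. Its hypothesis (3) is satisfied since every object involved lies in $\mcl{C}$.
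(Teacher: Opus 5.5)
Your argument is correct, but it reaches the key factorization by a different route than the paper.

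\textbf{The key factorization.} To show that $\phi_{\alpha,\beta}$ factors through $(M_\alpha/\opn{Ker}\varphi_\alpha)\boxtimes N_\beta$, the paper uses Lemma~\ref{lem:right-exact}. By that lemma, $\opn{Ker}(\pi_\alpha\boxtimes id_{N_\beta})$ is spanned by coefficients of $\mcl{Y}_{M_\alpha,N_\beta}(k,x)n$ with $k\in\opn{Ker}\varphi_\alpha=\bigcup_{\widetilde\alpha\geq\alpha}\opn{Ker} f_\alpha^{\widetilde\alpha}$. Each such coefficient is killed by $\phi_{\alpha,\beta}=\phi_{\widetilde\alpha,\beta}\circ(f_\alpha^{\widetilde\alpha}\boxtimes id_{N_\beta})$. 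This needs only right exactness of $\boxtimes$ (closure under quotients plus the universal property). So the paper deliberately avoids Proposition~\ref{prop:VOA-tens-prods-co-cont}, and with it the intertwining-operator hypothesis (3), at this step.

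You instead apply Proposition~\ref{prop:VOA-tens-prods-co-cont} to the infinite directed system of quotients $M_\alpha/\opn{Ker} f_\alpha^{\alpha'}$. This is the concrete form of the proof of Proposition~\ref{prop:loc_tensor}, and the paper remarks that the result also follows this way. It costs hypothesis (3). Your observation that the relevant target $\varinjlim_{\alpha'}\bar M_{\alpha,\alpha'}\boxtimes N_\beta$ lies in $\mcl{C}$, not just $\mcl{B}$, is exactly what reconciles the hypothesis of Proposition~\ref{prop:VOA-tens-prods-co-cont} with condition (3) of Theorem~\ref{thm:vertex-mon-for-C0}.

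\textbf{The second half.} The paper builds the cocone $\psi_{W,\beta}$ directly on the canonical system, indexed by all $\mcl{C}_0$-submodules $W\subseteq M$. It then checks by hand that $\psi_{W,\beta}$ does not depend on the chosen $\alpha$ with $W\subseteq\opn{Im}\varphi_\alpha$. You pass through $\varinjlim_{A\times B}\opn{Im}\varphi_\alpha\boxtimes N_\beta$ and then use a cofinality argument, which packages the same checks more conceptually.

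\textbf{One small slip.} In your second paragraph, the composite $M_\alpha\boxtimes N_\beta\to\bar M_{\alpha,\alpha'}\boxtimes N_\beta\to M_{\alpha'}\boxtimes N_\beta$ is $f_\alpha^{\alpha'}\boxtimes id_{N_\beta}$, not $\varphi_\alpha\boxtimes id$. With that correction the step goes through as you describe.
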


The proof is based on ideas from the proofs of Propositions \ref{prop:348} and \ref{prop:loc_tensor}, but it is more direct in the concrete vertex algebraic context thanks to the following lemma:
\begin{lemma}\label{lem:right-exact}
In the setting of Theorem \ref{thm:vertex-mon-for-C0}, let $f: M_1\rightarrow M_2$ be a surjection in $\mcl{C}_0$ and let $N$ be an object of $\mcl{C}_0$. Then $\opn{Ker}(f\boxtimes id_N)\subseteq M_1\boxtimes N$ is spanned by coefficients of powers of $x$ and $\log x$ in series of the form $\mcl{Y}_{M_1,N}(k,x)n$ for $k\in\opn{Ker} f$ and $n\in N$. Similarly, $\opn{Ker}(id_N\boxtimes f)\subseteq N\boxtimes M_1$ is spanned by coefficients of powers of $x$ and $\log x$ in series of the form $\mcl{Y}_{N,M_1}(n,x)k$ for $k\in\opn{Ker} f$ and $n\in N$.
\end{lemma}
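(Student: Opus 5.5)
Let $K\subseteq M_1\boxtimes N$ be the span of the coefficients of powers of $x$ and $\log x$ in the series $\mcl{Y}_{M_1,N}(k,x)n$ with $k\in\opn{Ker} f$, $n\in N$. The plan is to show $K=\opn{Ker}(f\boxtimes id_N)$ by proving two inclusions.

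The inclusion $K\subseteq\opn{Ker}(f\boxtimes id_N)$ is immediate from the defining property of $f\boxtimes id_N$:
\[
(f\boxtimes id_N)\mcl{Y}_{M_1,N}(k,x)n=\mcl{Y}_{M_2,N}(f(k),x)n=0 .
\]

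For the reverse inclusion we need a quotient module and an intertwining operator on it.

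1. \emph{$K$ is a $V$-submodule.} Take $v\in V$ and apply the Jacobi identity (equivalently, the commutator and iterate formulas extracted from it). Then $v_m\mcl{Y}_{M_1,N}(k,x)n$ is expressed through terms $\mcl{Y}_{M_1,N}(v_jk,x)n$ and $\mcl{Y}_{M_1,N}(k,x)v_jn$. Since $\opn{Ker}f$ is a $V$-submodule, $v_jk\in\opn{Ker}f$, so both kinds of terms lie in $K$. The $L_0$ generalized grading is respected because $L_0$ acts through the $L_{-1}$-derivative property and the Jacobi identity. Hence $K$ is a generalized $V$-submodule.

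2. \emph{$Q=(M_1\boxtimes N)/K$ lies in $\mcl{C}_0$.} This holds by closure of $\mcl{C}_0$ under quotients, assumption (1) of Theorem \ref{thm:vertex-mon-for-C0}.

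3. \emph{Descend to $M_2$.} Let $p:M_1\boxtimes N\to Q$ be the projection. The composite $p\circ\mcl{Y}_{M_1,N}$ is an intertwining operator of type $\binom{Q}{M_1\,N}$ that vanishes on $\opn{Ker}f\otimes_{\mathbb C}N$. Since $f$ is surjective, it descends to a linear map $\bar{\mcl{Y}}$ on $M_2\otimes_{\mathbb C}N$ with
\[
\bar{\mcl{Y}}(f(m),x)n=p\,\mcl{Y}_{M_1,N}(m,x)n .
\]
Lower truncation, the Jacobi identity and the $L_{-1}$-derivative property for $\bar{\mcl{Y}}$ all follow from those of $p\circ\mcl{Y}_{M_1,N}$, because $f$ is a $V$-module map. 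So $\bar{\mcl{Y}}$ is an intertwining operator of type $\binom{Q}{M_2\,N}$.

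4. \emph{Factor $p$ through $f\boxtimes id_N$.} By the universal property of $M_2\boxtimes N$ relative to $\mcl{C}_0$ (Definition \ref{def:tens-prod}), there is a map $g:M_2\boxtimes N\to Q$ with $g\circ\mcl{Y}_{M_2,N}=\bar{\mcl{Y}}$. Then
\[
g\,(f\boxtimes id_N)\,\mcl{Y}_{M_1,N}(m,x)n=g\,\mcl{Y}_{M_2,N}(f(m),x)n=p\,\mcl{Y}_{M_1,N}(m,x)n .
\]
By \cite[Proposition 4.23]{HLZ3}, $M_1\boxtimes N$ is spanned by the coefficients of the series $\mcl{Y}_{M_1,N}(m,x)n$. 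Hence $g\circ(f\boxtimes id_N)=p$, which gives $\opn{Ker}(f\boxtimes id_N)\subseteq\opn{Ker}p=K$.

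The second statement, about $\opn{Ker}(id_N\boxtimes f)$, is proved the same way. In step 1 one now uses the Jacobi identity in the second slot: the commutator formula gives $\mcl{Y}_{N,M_1}(n,x)v_jk$ and $\mcl{Y}_{N,M_1}(v_jn,x)k$ terms. In step 3 the descended map is $\bar{\mcl{Y}}(n,x)f(m)=p\,\mcl{Y}_{N,M_1}(n,x)m$.

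The main obstacle is step 1, verifying that $K$ is closed under all modes $v_m$. I would derive the needed expansion from the Jacobi identity by taking residues. This yields each $v_m\mcl{Y}(k,x)n$ as a finite combination, with coefficients involving powers of $x$, of $\mcl{Y}(k,x)v_jn$ and $\mcl{Y}(v_jk,x)n$. The finiteness comes from lower truncation of $Y_{M_1}(v,x_0)k$ in $x_0$. Extracting individual $x$ and $\log x$ coefficients from these finite sums then shows $K$ is a submodule.
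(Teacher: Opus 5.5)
Your proposal is correct and follows essentially the same route as the paper. Both define $K$ as the span of the coefficients and show it is a submodule via the commutator formula, then descend $\mcl{Y}_{M_1,N}$ modulo $K$ to an intertwining operator of type $\binom{(M_1\boxtimes N)/K}{M_2\,N}$, and finally use the universal property of $M_2\boxtimes N$ together with \cite[Proposition 4.23]{HLZ3}. Your identity $g\circ(f\boxtimes id_N)=p$ is the same as the paper's statement that the induced map $(M_1\boxtimes N)/K\to M_2\boxtimes N$ has a left inverse. You also make explicit something the paper leaves implicit: $(M_1\boxtimes N)/K$ lies in $\mcl{C}_0$ by closure under quotients, and this is needed to invoke the universal property relative to $\mcl{C}_0$.
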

\begin{proof}
This is proved in \cite[Proposition 4.26]{HLZ3}, but we review the proof of the first assertion for the reader's convenience, and the second assertion is proved similarly.

Let $K$ be the subspace of $M_1\boxtimes N$ spanned by coefficients of powers of $x$ and $\log x$ in series of the form $\mcl{Y}_{M_1,N}(k,x)n$ for $k\in\opn{Ker} f$, $n\in N$. Then $K$ is a $V$-submodule by the commutator formula for intertwining operators (which is the coefficient of $x_0^{-1}$ in the Jacobi identity) and the fact that $\opn{Ker} f$ is a submodule. The definition of $f\boxtimes id_N$ implies that $K\subseteq\opn{Ker}(f\boxtimes id_N)$, so it is enough to show that the natural map $\overline{f\boxtimes id_N}: (M_1\boxtimes N)/K\rightarrow M_2\boxtimes N$ induced by $f\boxtimes id_N$ is injective. Thus we construct a one-sided inverse $M_2\boxtimes N\rightarrow (M_1\boxtimes N)/K$.

 To do so, the universal property in Definition \ref{def:tens-prod} shows we need an intertwining operator of type $\binom{(M_1\boxtimes N)/K}{M_2\,\,N}$. Using notation from Definitions \ref{def:intw-op} and \ref{def:tens-prod}, we define
\begin{align*}
\mcl{Y}: M_2\otimes_{\mathbb{C}} N &\longrightarrow ((M_1\otimes_{\mathbb{C}} N)/K) [\log x]\lbrace x\rbrace\\
m_2\otimes_{\mathbb{C}} n &\longmapsto \sum_{h\in\mathbb{C}}\sum_{k\in\mathbb{Z}_{\geq 0}} ((m_1)^{\mcl{Y}_{M_1,N}}_{h;k} n +K)\,x^{-h-1}(\log x)^k,
\end{align*}
where $f(m_1)=m_2$ (such an $m_1$ exists for any $m_2\in M_2$ because $f$ is surjective). The map $\mcl{Y}$ is well defined because if $f(m_1)=m_2=f(\widetilde{m}_1)$ for $m_1,\widetilde{m}_1\in M_1$, then $m_1-\widetilde{m}_1\in\mathrm{Ker}\,f$ and hence $(m_1-\widetilde{m}_1)^{\mcl{Y}_{M_1,N}}_{h;k} n\in K$ for all $h\in\mathbb{C}$, $k\in\mathbb{Z}_{\geq 0}$, and $n\in N$. It is also easy to see that $\mcl{Y}$ is an intertwining operator since $\mcl{Y}_{M_1,N}$ is an intertwining operator and $f$ is a $V$-module map.

Thus by the universal property of the tensor product $M_2\boxtimes N$, there is a unique $V$-module map $p: M_2\boxtimes N\rightarrow (M_1\boxtimes N)/K$ such that $p\circ\mcl{Y}_{M_2,N}=\mcl{Y}$. Then letting $\pi: M_1\boxtimes N\rightarrow (M_1\boxtimes N)/K$ denote the natural surjection, we have
\begin{align*}
p\circ\overline{f\boxtimes id_N}\circ\pi\circ\mcl{Y}_{M_1,N}(m_1,x)n & = p\circ(f\boxtimes id_N)\circ\mcl{Y}_{M_1,N}(m_1,x)n\\
& = p\circ\mcl{Y}_{M_2,N}(f(m_1),x)n\\
& =\mcl{Y}(f(m_1),x)n\\
& = \pi\circ\mcl{Y}_{M_1,N}(m_1,x)n
\end{align*}
for any $m_1\in M_1$, $n\in N$. Since $M_1\boxtimes N$ is spanned by the coefficients of powers of $x$ and $\log x$ in series of the form $\mcl{Y}_{M_1,N}(m_1,x)n$, and since $\pi$ is surjective, it follows that $p\circ\overline{f\boxtimes id_N}= id_{(M_1\boxtimes N)/K}$, and therefore $\overline{f\boxtimes id_N}$ is injective. This proves that $K=\opn{Ker}(f\boxtimes id_N)$, and $\opn{Ker}(id_N\boxtimes f)$ is determined similarly.
\end{proof}

We now prove Proposition \ref{prop:key-prop}, which as already mentioned will prove Theorem \ref{thm:vertex-mon-for-C0} when combined with arguments from \cite{CMY-completions} and Proposition \ref{prop:VOA-tens-prods-co-cont}:
\begin{proof}[Proof of Proposition \ref{prop:key-prop}]
We prove that $\Phi_{M_A,N_B}$ is an isomorphism, since the proof for $\Psi_{M_A,N_B}$ is similar. We will use Lemma \ref{lem:right-exact} to show that for any $\alpha\in A$, $\beta\in B$, there is a unique $V$-module map
\begin{equation*}
\overline{\phi}_{\alpha,\beta}: (M_\alpha/\opn{Ker} \varphi_\alpha)\boxtimes N_\beta \longrightarrow \varinjlim M_A\boxtimes N_B
\end{equation*}
such that 
\begin{equation*}
\phi_{\alpha,\beta}=\overline{\phi}_{\alpha,\beta}\circ(\pi_\alpha\boxtimes id_{N_\beta}),
\end{equation*}
where $\pi_\alpha: M_\alpha\rightarrow M_\alpha/\mathrm{Ker}\,\varphi_\alpha$ is the natural surjection and $\phi_{\alpha,\beta}: M_\alpha\boxtimes N_\beta\rightarrow\varinjlim M_A\boxtimes N_B$ is the structure map of the direct limit. Since $\pi_\alpha\boxtimes id_{N_\beta}$ is surjective by \cite[Proposition 4.26]{HLZ3}, it is enough to show that $\opn{Ker}(\pi_\alpha\boxtimes id_{N_\beta})\subseteq\opn{Ker} \phi_{\alpha,\beta}$.

By \cite[Lemma 3.5]{CMY-completions} for example, $\mathrm{Ker}\,\varphi_\alpha=\bigcup_{\widetilde{\alpha}\geq\alpha} \opn{Ker} f_\alpha^{\widetilde{\alpha}}$, where as before, $f_\alpha^{\widetilde{\alpha}}: M_\alpha\rightarrow M_{\widetilde{\alpha}}$ is a transition map of $M_A$. Thus by Lemma \ref{lem:right-exact}, $\opn{Ker}(\pi_\alpha\boxtimes id_{N_\beta})$ is spanned by coefficients of powers of $x$ and $\log x$ in series of the form $\mcl{Y}_{M_\alpha,N_\beta}(k_{\widetilde{\alpha}},x)n$ where $k_{\widetilde{\alpha}}\in\opn{Ker} f_\alpha^{\widetilde{\alpha}}$ for some $\widetilde{\alpha}\geq\alpha$ in $A$ and $n\in N$. Then for such $k_{\widetilde{\alpha}}$ and $n$,
\begin{align*}
\phi_{\alpha,\beta}\circ\mcl{Y}_{M_\alpha,N_\beta}(k_{\widetilde{\alpha}},x)n & = \phi_{\widetilde{\alpha},\beta}\circ(f_\alpha^{\widetilde{\alpha}}\boxtimes id_{N_\beta})\circ\mcl{Y}_{M_\alpha,N_\beta}(k_{\widetilde{\alpha}},x)n\\
& =\phi_{\widetilde{\alpha},\beta}\circ\mcl{Y}_{M_{\widetilde{\alpha}},N_\beta}(f_\alpha^{\widetilde{\alpha}}(k_{\widetilde{\alpha}}),x)n = 0,
\end{align*}
showing that $\opn{Ker}(\pi_\alpha\boxtimes id_{N_\beta})\subseteq\opn{Ker}\phi_{\alpha,\beta}$, and thus $\overline{\phi}_{\alpha,\beta}$ is well defined.

Now to construct an inverse of $\Phi_{M_A,N_B}$, take any $\mcl{C}_0$-submodule $W$ of $M=\varinjlim M_A$. By assumption, $W$ is a finitely-generated $V$-module, so because $M=\bigcup_{\alpha\in A} \mathrm{Im}\,\varphi_\alpha$ (see for example \cite[Proposition 3.2]{CMY-completions}), the finitely many generators of $W$ are contained in $\bigcup_{i=1}^N\mathrm{Im}\,\varphi_{\alpha_i}$ for finitely many $\alpha_1,\ldots,\alpha_N\in A$. Then because $A$ is a directed set, there exists $\alpha\in A$ such that $\alpha\geq \alpha_i$ for $i=1,\ldots, N$, and it then follows that $W\subseteq\mathrm{Im}\,\varphi_\alpha$ (see for example \cite[Lemma 4.2]{CMY-completions}). We use $f_W^{\opn{Im}\varphi_\alpha}: W\rightarrow\opn{Im} \varphi_\alpha$ to denote the inclusion, which is a transition map of $M_{\opn{Can}}$. We also use $h_\alpha: M_\alpha/\opn{Ker}\varphi_\alpha\rightarrow\opn{Im}\varphi_\alpha$ to denote the isomorphism induced by $\varphi_\alpha$. Then for $\beta\in B$, we define $\psi_{W,\beta}: W\boxtimes N_\beta\rightarrow\varinjlim M_A\boxtimes N_B$ to be the composition
\begin{equation*}
W\boxtimes N_\beta\xrightarrow{f_W^{\opn{Im} \varphi_\alpha}\boxtimes id_{N_\beta}}\mathrm{Im}\,\varphi_\alpha\boxtimes N_\beta\xrightarrow{h_\alpha^{-1}\boxtimes id_{N_\beta}} (M_\alpha/\mathrm{Ker}\,\varphi_\alpha)\boxtimes N_\beta\xrightarrow{\overline{\phi}_{\alpha,\beta}}\varinjlim M_A\boxtimes N_B.
\end{equation*}
We claim that $\psi_{W,\beta}$ does not depend on the choice of $\alpha\in A$ such that $W\subseteq\mathrm{Im}\,\varphi_\alpha$. 

Indeed, if $W\subseteq\mathrm{Im}\,\varphi_{\alpha_i}$ for $i=1,2$, then there exists $\alpha\in A$ such that $\alpha\geq \alpha_i$ for $i=1,2$. Then
\begin{align*}
\overline{\phi}_{\alpha,\beta} & \circ(h_\alpha^{-1}\boxtimes id_{N_\beta})\circ(f_W^{\opn{Im} \varphi_\alpha}\boxtimes id_{N_\beta}) \nonumber\\
&= \overline{\phi}_{\alpha,\beta}\circ(h_\alpha^{-1}\boxtimes id_{N_\beta})\circ(f_{\opn{Im} \varphi_{\alpha_i}}^{\opn{Im}\varphi_\alpha}\boxtimes id_{N_\beta})\circ(f_{W}^{\opn{Im}\varphi_{\alpha_i}}\boxtimes id_{N_\beta}).
\end{align*}
It is enough to show that the composition of the first three morphisms on the right side is equal to $\overline{\phi}_{\alpha_i,\beta}\circ(h_{\alpha_i}^{-1}\boxtimes id_{N_\beta})$, since this will show that the definition of $\psi_{W,\beta}$ using $\alpha_1$ agrees with that using $\alpha_2$ (and both agree with the definition using $\alpha$).
Since $\varphi_{\alpha_i}\boxtimes id_{N_\beta}$ is surjective onto $\opn{Im} \varphi_{\alpha_i}\boxtimes N_\beta$, it is enough to show that
\begin{align*}
\overline{\phi}_{\alpha,\beta}  \circ(h_\alpha^{-1}\boxtimes id_{N_\beta}) & \circ(f_{\opn{Im} \varphi_{\alpha_i}}^{\opn{Im}\varphi_\alpha}\boxtimes id_{N_\beta})\circ(\varphi_{\alpha_i}\boxtimes id_{N_\beta})\\
& = \overline{\phi}_{\alpha_i,\beta}\circ(h_{\alpha_i}^{-1}\boxtimes id_{N_\beta})\circ(\varphi_{\alpha_i}\boxtimes id_{N_\beta}).
\end{align*}
To prove this, we use the commutative diagram
\begin{equation*}
\xymatrixcolsep{3pc}
\xymatrix{
M_{\alpha_i} \ar[d]^{f_{\alpha_i}^\alpha} \ar[rr]^{\varphi_{\alpha_i}=h_{\alpha_i}\circ\pi_{\alpha_i}} && \opn{Im}\varphi_{\alpha_i} \ar[d]^{f_{\opn{Im}\varphi_{\alpha_i}}^{\opn{Im}\varphi_\alpha}} \\
M_\alpha \ar[r]^(.4){\pi_\alpha} & M_\alpha/\opn{Ker} \varphi_\alpha \ar[r]^(.55){h_\alpha} & \opn{Im} \varphi_\alpha\\
}
\end{equation*}
which implies that $h_\alpha^{-1}\circ f_{\opn{Im}\varphi_{\alpha_i}}^{\opn{Im}\varphi_\alpha}\circ\varphi_{\alpha_i} =\pi_\alpha\circ f_{\alpha_i}^\alpha$. Thus using the definitions,
\begin{align*}
\overline{\phi}_{\alpha,\beta}  \circ(h_\alpha^{-1}\boxtimes id_{N_\beta}) & \circ(f_{\opn{Im} \varphi_{\alpha_i}}^{\opn{Im}\varphi_\alpha}\boxtimes id_{N_\beta})\circ(\varphi_{\alpha_i}\boxtimes id_{N_\beta})\\
& =\overline{\phi}_{\alpha,\beta}\circ(\pi_\alpha\boxtimes id_{N_\beta})\circ(f_{\alpha_i}^\alpha\boxtimes id_{N_\beta})\\
& = \phi_{\alpha,\beta}\circ(f_{\alpha_i}^\alpha\boxtimes id_{N_\beta})\\
& = \phi_{\alpha_i,\beta}\\
& =\overline{\phi}_{\alpha_i,\beta}\circ(\pi_{\alpha_i}\boxtimes id_{N_\beta})\\
& =\overline{\phi}_{\alpha_i,\beta}\circ(h_{\alpha_i}^{-1}\boxtimes id_{N_\beta})\circ(\varphi_{\alpha_i}\boxtimes id_{N_\beta}),
\end{align*}
as required. This proves that the definition of $\psi_{W,\beta}$ does not depend on the choice of $\alpha$ such that $W\subseteq\opn{Im}\varphi_\alpha$.

Now we claim that the maps $\psi_{W,\beta}: W\boxtimes N_\beta\rightarrow\varinjlim M_A\boxtimes N_B$ make $\varinjlim M_A\boxtimes N_B$ a cocone of $M_{\opn{Can}}\boxtimes N_B$. Indeed, for $\mcl{C}_0$-submodules $W_1\subseteq W_2$ of $M$ and $\beta_1\leq \beta_2$ in $B$, we take $\alpha\in A$ such that $W_1\subseteq W_2\subseteq\opn{Im}\varphi_\alpha$ and calculate
\begin{align*}
\psi_{W_2,\beta_2} & \circ(f_{W_1}^{W_2}\boxtimes g_{\beta_1}^{\beta_2})\\
 &=\overline{\phi}_{\alpha,\beta_2}\circ(h_\alpha^{-1}\boxtimes id_{N_{\beta_2}})\circ(f_{W_2}^{\opn{Im}\varphi_\alpha}\boxtimes id_{N_{\beta_2}})\circ(f_{W_1}^{W_2}\boxtimes g_{\beta_1}^{\beta_2})\\
& =\overline{\phi}_{\alpha,\beta_2}\circ(id_{M_\alpha/\opn{Ker}\varphi_\alpha}\boxtimes g_{\beta_1}^{\beta_2})\circ(h_\alpha^{-1}\boxtimes id_{N_{\beta_1}})\circ(f_{W_1}^{\opn{Im}\varphi_\alpha}\boxtimes id_{N_{\beta_1}}).
\end{align*}
We need to show that this equals $\psi_{W_1,\beta_1}$, so it is enough to show that
\begin{equation*}
\overline{\phi}_{\alpha,\beta_2}\circ(id_{M_\alpha/\opn{Ker}\varphi_\alpha}\boxtimes g_{\beta_1}^{\beta_2}) = \overline{\phi}_{\alpha,\beta_1}.
\end{equation*}
This follows from the next calculation since $\pi_\alpha\boxtimes id_{N_{\beta_1}}$ is surjective:
\begin{align*}
\overline{\phi}_{\alpha,\beta_2}\circ(id_{M_\alpha/\opn{Ker}\varphi_\alpha} & \boxtimes g_{\beta_1}^{\beta_2})\circ(\pi_\alpha\boxtimes id_{N_{\beta_1}})\\
& =\phi_{\alpha,\beta_2}\circ(id_{M_\alpha}\boxtimes g_{\beta_1}^{\beta_2})
 =\phi_{\alpha,\beta_1} =\overline{\phi}_{\alpha,\beta_1}\circ(\pi_\alpha\boxtimes id_{N_{\beta_1}}).
\end{align*} 
Thus $\varinjlim M_A\boxtimes N_B$ is a cocone of $M_{\opn{Can}}\boxtimes N_B$, so by the universal property of direct limits, there is a unique map $\widetilde{\Phi}_{M_A,N_B}: \varinjlim M_{\opn{Can}}\boxtimes N_B\rightarrow\varinjlim M_A\boxtimes N_B$ such that
\begin{equation*}
\xymatrixcolsep{5pc}
\xymatrix{
W\boxtimes N_\beta \ar[d]^{\phi_{W,\beta}} \ar[rd]^{\psi_{W,\beta}} & \\
\varinjlim M_{\opn{Can}}\boxtimes N_B \ar[r]^{\widetilde{\Phi}_{M_A,N_B}} & \varinjlim M_A\boxtimes N_B\\
}
\end{equation*}
commutes for $\mcl{C}_0$-submodules $W\subseteq M$ and $\beta\in B$.

We now show that $\widetilde{\Phi}_{M_A,N_B}$ is the inverse of $\Phi_{M_A,N_B}$. To show that $\widetilde{\Phi}_{M_A,N_B}\circ\Phi_{M_A,N_B}$ is the identity on $\varinjlim M_A\boxtimes N_B$, the universal property of direct limits implies that it is enough to show that
\begin{equation*}
\widetilde{\Phi}_{M_A,N_B}\circ\Phi_{M_A,N_B}\circ\phi_{\alpha,\beta}=\phi_{\alpha,\beta}
\end{equation*}
for all $\alpha\in A$, $\beta\in B$. To show this, we observe that since $\opn{Im}\,\varphi_\alpha$ is a $\mcl{C}_0$-submodule of $M$ which is contained in itself, we can use $\alpha$ to define $\psi_{\opn{Im}\varphi_\alpha,\beta}$, and therefore
\begin{align*}
\widetilde{\Phi}_{M_A,N_B}\circ\Phi_{M_A,N_B}\circ\varphi_{\alpha,\beta} & = \widetilde{\Phi}_{M_A,N_B}\circ\phi_{\opn{Im}\varphi_\alpha,\beta}\circ(\varphi_\alpha\boxtimes id_{N_\beta})\\
& =\psi_{\opn{Im}\varphi_\alpha,\beta}\circ(\varphi_\alpha\boxtimes id_{N_\beta})\\
& = \overline{\phi}_{\alpha,\beta}\circ(h_\alpha^{-1}\boxtimes id_{N_\beta})\circ(f_{\opn{Im}\varphi_\alpha}^{\opn{Im}\varphi_\alpha}\boxtimes id_{N_\beta})\circ(\varphi_\alpha\boxtimes id_{N_\beta})\\
& = \overline{\phi}_{\alpha,\beta}\circ(\pi_\alpha\boxtimes id_{N_\beta})\\
& =\phi_{\alpha,\beta},
\end{align*}
as required. Finally, to show that $\Phi_{M_A,N_B}\circ\widetilde{\Phi}_{M_A,N_B}$ is the identity on $\varinjlim M_{\opn{Can}}\boxtimes N_B$, the universal property of direct limits implies that it is enough to show that
\begin{equation*}
\Phi_{M_A,N_B}\circ\widetilde{\Phi}_{M_A,N_B}\circ\phi_{W,\beta}=\phi_{W,\beta}
\end{equation*}
for all $\mcl{C}_0$-submodules $W\subseteq M$ and $\beta\in B$. Thus we choose $\alpha\in A$ such that $W\subseteq\opn{Im}\varphi_\alpha$ and calculate
\begin{align*}
\Phi_{M_A,N_B}\circ\widetilde{\Phi}_{M_A,N_B}\circ\phi_{W,\beta} & = \Phi_{M_A,N_B}\circ\psi_{W,\beta}\\
& = \Phi_{M_A,N_B}\circ\overline{\phi}_{\alpha,\beta}\circ(h_\alpha^{-1}\boxtimes id_{N_\beta})\circ(f_W^{\opn{Im}\varphi_\alpha}\boxtimes id_{N_\beta}).
\end{align*}
We compute the composition of the first three morphisms on the right side by composing them with the surjection $\varphi_\alpha\boxtimes id_{N_\beta}$ onto $\opn{Im}\varphi_\alpha\boxtimes N_\beta$:
\begin{align*}
 \Phi_{M_A,N_B}\circ\overline{\phi}_{\alpha,\beta}\circ(h_\alpha^{-1}\boxtimes id_{N_\beta})\circ(\varphi_\alpha\boxtimes id_{N_\beta}) & = \Phi_{M_A,N_B}\circ\overline{\phi}_{\alpha,\beta}\circ(\pi_\alpha\boxtimes id_{N_\beta})\\
 & =\Phi_{M_A,N_B}\circ\phi_{\alpha,\beta}\\
 & =\phi_{\opn{Im}\varphi_\alpha,\beta}\circ(\varphi_\alpha\boxtimes id_{N_\beta}).
\end{align*}
It follows that
\begin{align*}
\Phi_{M_A,N_B}\circ\widetilde{\Phi}_{M_A,N_B}\circ\phi_{W,\beta} =\phi_{\opn{Im}\varphi_\alpha,\beta}\circ(f_W^{\opn{Im}\varphi_\alpha}\boxtimes id_{N_\beta}) =\phi_{W,\beta},
\end{align*}
as required. This completes the proof that $\widetilde{\Phi}_{M_A,N_B}$ is the inverse of $\Phi_{M_A,N_B}$, so that $\Phi_{M_A,N_B}$ is an isomorphism.
\end{proof}

\bibliographystyle{abbrv}

\end{document}